\title{\sc {\huge Isometries of polyhedral {H}ilbert geometries}}
\author{\sc Bas Lemmens\\
\footnotesize Mathematics Institute, University of Warwick\\
\footnotesize CV4 7AL Coventry, United Kingdom\\
\footnotesize E-mail: {\tt B.Lemmens@warwick.ac.uk}\\
\mbox{}\\
\sc Cormac Walsh\footnote{C. Walsh was partially supported by the joint
RFBR-CNRS grant number 05-01-02807 }\\
\footnotesize INRIA Saclay \& CMAP, \'{E}cole Polytechnique,\\ 
\footnotesize 91128 Palaiseau, France\\
\footnotesize E-mail: {\tt cormac.walsh@inria.fr}}
\date{\today}
\let\epsilon=\varepsilon
\let\phi=\varphi
\let\theta=\vartheta
\newtheorem{theorem}{Theorem}[section]
\newtheorem{Definition}[theorem]{Definition}
\newtheorem{lemma}[theorem]{Lemma}
\newtheorem{corollary}[theorem]{Corollary}
\newtheorem{proposition}[theorem]{Proposition}
\begin{document}
\maketitle

{\sc Abstract.-} We show that the isometry group of a polyhedral Hilbert
geometry coincides with its group of collineations (projectivities)
if and only if the polyhedron is not an $n$-simplex with $n\geq 2$.
Moreover, we determine the isometry group of the Hilbert geometry on the
$n$-simplex for all $n\geq 2$, and find that it has the collineation group
as an index-two subgroup. These results confirm, for the class of polyhedral
Hilbert geometries, several conjectures posed by P.~de~la~Harpe. \\
\mbox{}

{\sc AMS Classification (2000):}
53C60,  %Finsler spaces and generalizations (areal metrics)
22F50 \\ %Groups as automorphisms of other structures 
\mbox{}

{\sc Keywords.-} Hilbert metric, horofunction boundary, detour metric,
isometry group, collineations, Busemann points
\mbox{}

\section{Introduction}\label{sec:1}

In a letter to Klein, Hilbert remarked that every open bounded convex subset
$X$ of $\mathbb{R}^n$ can be equipped with a metric
$d_X\colon X\times X\to[0,\infty)$ defined by 
\begin{equation}\label{eq:1.1}
d_X(x,y)=\log\,[x',x,y,y'], 
\end{equation}
where $x',y'\in\partial X$, the points $x',x,y,y'$ are aligned in this order,
and 
\begin{equation}\label{eq:1.2} 
[x',x,y,y']=\frac{|x'y|\,|y'x|}{|x'x|\,|y'y|}
\end{equation}
is the \emph{cross-ratio}. This metric is called the \emph{Hilbert metric}
and $(X,d_X)$ is said to be the \emph{Hilbert geometry} on $X$.

As Hilbert noted \cite{Hil}, if $X$ is an open $n$-dimensional ellipsoid,
then $(X,d_X)$ is a model for the hyperbolic $n$-space. On the other hand,
if $X$ is an open $n$-simplex, then $(X,d_X)$ is isometric to a normed space.
More precisely, let $V=\mathbb{R}^{n+1}/\sim$, where $x\sim y$ if
$x=y +\alpha (1,1,\ldots,1)$ for some $\alpha \in\mathbb{R}$, and equip the
vector space $V$ with the \emph{variation norm}: 
\begin{equation*}
\|x\|_{\mathrm{var}}= \max_i x_i - \min_j x_j. 
\end{equation*}
It is known \cite[Proposition 1.7]{Nu1} that if $X$ is an $n$-dimensional
simplex, then $(X,d_X)$ is isometric to $(V,\|\cdot\|_{\mathrm{var}})$. 

Hilbert geometries display features of negative curvature and are of interest
in metric geometry. The extent to which the shape of the domain $X$ affects
the geometry of $(X,d_X)$ has been the subject of numerous studies,
for example~\cite{Be,Bu,CV,FK,KN,So1,So2,Wa1}. The Hilbert metric also has
striking applications in the spectral theory of (non-linear) operators on
cones in a Banach space; see, for instance, \cite{Bi,Bus,LN,Nu1,Sa}.

We study the group of isometries $\mathrm{Isom}(X)$ of the Hilbert geometry
when the domain $X$ is a polyhedron in $\mathbb{R}^n$, in other words, when $X$
is the intersection of finitely many open half-spaces. For simplicity, we call
such Hilbert geometries \emph{polyhedral}.

Natural isometries arise from collineations (projectivities) of $X$. Indeed,
let $\mathbb{P}^n=\mathbb{R}^n\cup\mathbb{P}^{n-1}$ be the real
$n$-dimensional projective space. Suppose that $X$ is contained in the
open cell $\mathbb{R}^n$ inside $\mathbb{P}^n$, and let
$\mathrm{Coll}(X)=\{h\in\mathrm{PGL}(n,\mathbb{R})\colon h(X)=X\}$ be the group
of collineations that map $X$ onto itself. As every collineation preserves the
cross-ratio, we have that $\mathrm{Coll}(X)\subseteq \mathrm{Isom}(X)$.

In~\cite{dlH}, de la Harpe raised a number of questions concerning
$\mathrm{Isom}(X)$ and its relation to $\mathrm{Coll}(X)$. In particular,
he conjectured that $\mathrm{Isom}(X)$ is a Lie group, and that
$\mathrm{Isom}(X)$ acts transitively on $X$ if and only if $\mathrm{Coll}(X)$
does. He also asked for which sets $X$ the groups 
$\mathrm{Isom}(X)$ and $\mathrm{Coll}(X)$ coincide.
Of course, if the two groups are equal, then $\mathrm{Isom}(X)$ is a Lie group,
since $\mathrm{Coll}(X)$ is a closed subgroup of $\mathrm{PGL}(n,\mathbb{R})$.
De la Harpe~\cite[Proposition 3]{dlH} proved that if the norm closure
$\overline{X}$ of $X$ is strictly convex,
then the groups are equal. He also determined $\mathrm{Isom}(X)$ when $X$ is an
open $2$-simplex and showed that $\mathrm{Isom}(X)=\mathrm{Coll}(X)$ when $X$
is an open quadrilateral in the plane.

Our main results are the following two theorems, which confirm de la Harpe's
conjectures for the class of polyhedral Hilbert geometries. 
\begin{theorem}\label{thm:1.1} 
If $(X,d_X)$ is a polyhedral Hilbert geometry, then 
\[
\mathrm{Isom}(X)=\mathrm{Coll}(X)
\]
if and only if $X$ is not an open $n$-simplex with $n\geq 2$. 
\end{theorem}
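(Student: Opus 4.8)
The plan is to prove two directions. For the "if" direction, I must show that if $X$ is not an $n$-simplex (with $n \geq 2$), then $\mathrm{Isom}(X) = \mathrm{Coll}(X)$; since $\mathrm{Coll}(X) \subseteq \mathrm{Isom}(X)$ always holds, the content is the reverse inclusion. For the "only if" direction, I must exhibit, when $X$ is an $n$-simplex with $n \geq 2$, an isometry that is not a collineation. Let me think about how to approach each.

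Let me think about the natural tool here. The paper's abstract mentions the horofunction boundary and the detour metric, so the strategy is almost certainly to study the action of isometries on the horofunction compactification. Any isometry of $(X, d_X)$ extends to a homeomorphism of the horofunction boundary and permutes the Busemann points; moreover it preserves the detour metric on the set of Busemann points, which decomposes the boundary into "parts." My plan is to identify these parts combinatorially for a polyhedral Hilbert geometry — I expect the horofunction boundary of a polyhedron to be governed by its face structure (each face contributing horofunctions), with the detour-metric parts corresponding to faces. An isometry then induces a permutation of the faces that respects inclusion and the detour-metric isometry type of each part. The key point is that a polyhedron that is not a simplex has a rigid enough face lattice that any such boundary automorphism must be realized by a collineation, whereas the simplex, being isometric to the normed space $(V, \|\cdot\|_{\mathrm{var}})$, has extra symmetry.

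For the "if" direction I would proceed as follows. First, set up the horofunction boundary of $(X, d_X)$ for polyhedral $X$ and compute it explicitly in terms of the defining half-spaces/faces; record which horofunctions are Busemann points and compute the detour metric between them, thereby determining the parts. Second, given an arbitrary isometry $g$, use the fact that it acts on the boundary preserving the part structure and the detour metric to extract combinatorial data: $g$ induces an automorphism of the face lattice of $X$. Third, show this face-lattice automorphism, combined with the isometric action, forces $g$ to agree with a projective map — the idea being that the positions of the facets determine a projective frame, and matching parts pins down where $g$ must send a maximal collection of independent points; since $X$ is not a simplex it has at least $n+2$ facets, giving more than enough projective data to determine a unique collineation, after which one checks $g$ coincides with it on all of $X$.

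For the "only if" direction, I would use the isometry $(X, d_X) \cong (V, \|\cdot\|_{\mathrm{var}})$ for the $n$-simplex. The variation norm is invariant under the symmetric group $S_{n+1}$ acting by permuting coordinates and also under the linear map $x \mapsto -x$ (negation descends to $V$ since it fixes the line through $(1,\dots,1)$). The collineation group corresponds to the permutation symmetries together with translations, so the map induced by $x \mapsto -x$ is an isometry that reverses the roles of $\max$ and $\min$; I would verify that this map is not a projective transformation of the simplex (for instance by checking it does not preserve the cross-ratio / is not induced by any element of $\mathrm{PGL}$), exhibiting the needed extra isometry and showing $\mathrm{Coll}(X)$ has index two, consistent with the companion theorem. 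The main obstacle I anticipate is the third step of the "if" direction: reconstructing an actual collineation from the combinatorial boundary data and proving the given isometry equals it everywhere, rather than merely on the boundary. This rigidity step — upgrading a lattice automorphism plus detour-metric data to a genuine element of $\mathrm{PGL}(n,\mathbb{R})$ — is where the non-simplex hypothesis (at least $n+2$ facets in general position enough to fix a projective frame) must be used essentially, and it is the crux of the whole argument.
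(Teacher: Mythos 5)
Your overall architecture matches the paper's: horofunction boundary, detour metric, decomposition into parts, and for the simplex the variation-norm model with the extra isometry $\rho(x)=-x$. The ``only if'' direction of your plan is essentially the paper's argument and is fine. But the ``if'' direction has a genuine gap. You assert that an isometry ``induces a permutation of the faces that respects inclusion,'' i.e.\ an automorphism of the face lattice, and then plan to upgrade that to a collineation. This is precisely what cannot be assumed. In the paper, parts correspond to pairs $(F,U)$ with $F$ a relatively open face of the cone $C_X$ and $U\in\mathcal{T}(\tau(C_X,z))$, and the two distinguished families of maximal-dimensional parts --- vertex parts and facet parts --- both have dimension $n-1$, so the detour-metric data alone does not prevent an isometry from acting like a combinatorial \emph{duality}, interchanging vertex parts with facet parts. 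This is not a hypothetical worry: the exceptional simplex isometry $\rho$ does exactly that, so any correct proof must confront the swap possibility head-on. The paper's crux is the dichotomy of Theorem~\ref{thm:5.6} (every isometry either preserves the two families or interchanges them, proved via a Gromov-product estimate using the hyperbolicity bound of Karlsson--Noskov, together with Proposition~\ref{prop:5.4} distinguishing product parts from Hilbert geometries) and then Theorem~\ref{thm:7.1}, which shows that an interchanging isometry forces the vertex set to be affinely independent, i.e.\ forces $X$ to be a simplex. Your proposal has no counterpart to either step, and without them the claim that the boundary action is an inclusion-respecting lattice automorphism is unjustified.

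A secondary but related error: you locate the use of the non-simplex hypothesis in the rigidity step (``at least $n+2$ facets in general position fix a projective frame''), but that is not where the hypothesis does its work in any correct argument along these lines. The reconstruction of a collineation from an isometry that maps vertex parts to vertex parts --- continuous extension to $\partial X$ (Lemma~\ref{lem:6.1}) followed by an induction on dimension with projective bases (Theorem~\ref{thm:6.4}) --- is valid for \emph{every} polytope, simplices included, and uses only $n+1$ vertices forming a simplex plus an interior point. The non-simplex hypothesis enters solely to rule out the part-interchanging isometries. So your plan both omits the step where the real difficulty lies and misattributes the role of the hypothesis that resolves it.
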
 
We also determine the isometry group in the case of the $n$-simplex.
Let $\sigma_{n+1}$ be the group of coordinate
permutations on $V$, let $\rho\colon V\to V$ be the isometry given by
$\rho(x)=-x$ for $x\in V$, and identify the group of translations in $V$
with $\mathbb{R}^n$.
\begin{theorem}\label{thm:1.2} 
If $X$ is an open $n$-simplex with $n\geq 2$, then 
\[
\mathrm{Coll}(X) \cong \mathbb{R}^n\rtimes \sigma_{n+1}
\quad\mbox{and}\quad 
\mathrm{Isom}(X) \cong \mathbb{R}^n\rtimes \Gamma_{n+1}, 
\]
where $\Gamma_{n+1} =  \sigma_{n+1}\times\langle\rho\rangle$.
\end{theorem}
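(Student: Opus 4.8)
The plan is to prove Theorem~\ref{thm:1.2} by first settling the collineation group and then upgrading to the full isometry group using the identification of the simplex geometry with $(V,\|\cdot\|_{\mathrm{var}})$ from \cite[Proposition 1.7]{Nu1}. For the collineation part, I would use the fact that collineations of an $n$-simplex must permute its $n+1$ vertices, since collineations send extreme points to extreme points and the vertices are exactly the extreme points of $\overline{X}$. Each permutation of the vertices is realised by a unique element of $\mathrm{PGL}(n,\mathbb{R})$, giving an injection $\mathrm{Coll}(X)\hookrightarrow\sigma_{n+1}$; the coordinate permutations on $V$ furnish the surjection the other way. The translations $\mathbb{R}^n$ arise as the diagonal projective transformations fixing all vertices (scaling the homogeneous coordinates), and one checks these act as translations on $V$; together these give the semidirect product $\mathbb{R}^n\rtimes\sigma_{n+1}$, with $\sigma_{n+1}$ acting on $\mathbb{R}^n$ by permuting coordinates.

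For the isometry group, the key is that under the isometry $(X,d_X)\cong(V,\|\cdot\|_{\mathrm{var}})$, the problem reduces to computing the isometry group of the finite-dimensional normed space $(V,\|\cdot\|_{\mathrm{var}})$. By the Mazur--Ulam theorem, every isometry of a normed space is affine, so $\mathrm{Isom}(V,\|\cdot\|_{\mathrm{var}})=V\rtimes L$, where $V$ is the translation subgroup (identified with $\mathbb{R}^n$) and $L$ is the group of linear isometries of the norm. Thus the entire problem collapses to identifying the \emph{linear} isometries of the variation norm, and the claim becomes $L\cong\Gamma_{n+1}=\sigma_{n+1}\times\langle\rho\rangle$.

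The main work, and the place where I expect the real obstacle, is therefore the determination of the linear isometry group of $(V,\|\cdot\|_{\mathrm{var}})$. The plan is to analyse the unit ball $B=\{x:\|x\|_{\mathrm{var}}\le 1\}$, which is a polytope in $V$, and observe that any linear isometry must map $B$ onto itself and hence permute its faces of each dimension, in particular its vertices and its facets. I would identify the facets of $B$ with the conditions $x_i-x_j=\pm 1$ and show that the combinatorial symmetry group of this polytope is generated by the coordinate permutations $\sigma_{n+1}$ together with the central symmetry $\rho(x)=-x$. The containment $\Gamma_{n+1}\subseteq L$ is immediate since both coordinate permutations and $x\mapsto -x$ manifestly preserve $\max_i x_i-\min_j x_j$; the content is the reverse inclusion, that these are \emph{all} the linear isometries. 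Here one must rule out any extra symmetry of the polytope $B$ beyond the hyperoctahedral-type action coming from the coordinates, which requires a careful combinatorial argument on how a linear isometry can permute the extreme points of $B$ and a verification that $\rho\notin\sigma_{n+1}$ so that the product is direct of order $2\cdot(n+1)!$.

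Finally, I would assemble the pieces: $\mathrm{Isom}(X)\cong V\rtimes L\cong\mathbb{R}^n\rtimes\Gamma_{n+1}$, and since $\mathrm{Coll}(X)\cong\mathbb{R}^n\rtimes\sigma_{n+1}$ sits inside with $\sigma_{n+1}$ of index two in $\Gamma_{n+1}$, this exhibits $\mathrm{Coll}(X)$ as an index-two subgroup of $\mathrm{Isom}(X)$, matching the assertion in the abstract. One subtlety to check is that the abstract semidirect-product structures agree on both sides, i.e.\ that the translation subgroup is normal and that the $\Gamma_{n+1}$-action restricts to the $\sigma_{n+1}$-action on $\mathbb{R}^n$; this is routine once the linear isometry computation is in hand, since $\rho$ acts on translations by negation and commutes with the coordinate permutations.
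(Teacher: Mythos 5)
Your overall route coincides with the paper's: identify $(X,d_X)$ with $(V,\|\cdot\|_{\mathrm{var}})$ via \cite[Proposition 1.7]{Nu1}, invoke Mazur--Ulam to reduce the isometry group to the translations together with the linear isometries of the variation norm, and handle $\mathrm{Coll}(X)$ through the cone $C_X$, where any linear map preserving the cone permutes its extreme rays and hence factors as a permutation matrix times a positive diagonal matrix. The problem is that at the decisive step you only announce that ``a careful combinatorial argument'' will show the linear symmetry group of the unit ball is exactly $\Gamma_{n+1}$; this is precisely the substance of the theorem and is left unproven in your proposal. Moreover, the specific plan you sketch---computing the combinatorial symmetry group of the polytope---is both harder than necessary and not obviously sufficient as stated, since a priori the combinatorial automorphism group of a face lattice can exceed the group of linear symmetries; what you actually need is only to constrain \emph{linear} maps, and the argument should exploit the linear structure rather than pure combinatorics.

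For comparison, the paper closes this gap as follows. The vertices of the unit ball are represented by the $0$--$1$ vectors in $\mathbb{R}^{n+1}$ with the two constant vectors removed, so there are $2^{n+1}-2$ of them. An edge count shows that a vertex is incident to exactly $n$ edges precisely when its representative has exactly one coordinate equal to $0$ or exactly one equal to $1$ (the sets $V_0$ and $V_1$), all other vertices having $n+1$ incident edges; hence a linear isometry $g$ fixing the origin preserves $V_0\cup V_1$. One then checks that a subset $U\subseteq V_0\cup V_1$ with $n+1$ elements, containing no pair of antipodal points and satisfying $\sum_{[u]\in U}[u]=[0]$, must equal $V_0$ or $V_1$; since these properties are invariant under linear maps, $g(V_1)\in\{V_0,V_1\}$, and because $V_1$ spans $V$ and $V_0=-V_1$, it follows that $g\in\sigma_{n+1}$ or $g\in\rho\,\sigma_{n+1}$. (Your remark that one must verify $\rho\notin\sigma_{n+1}$ for $n\ge 2$ is correct and is exactly why the theorem excludes $n=1$.) Finally, note a slip in your collineation paragraph: the restriction map $\mathrm{Coll}(X)\to\sigma_{n+1}$ is \emph{not} an injection---its kernel consists of the positive diagonal scalings, which act as the translations $\mathbb{R}^n$ on $V$---it is a split surjection, and that is what produces the semidirect product $\mathbb{R}^n\rtimes\sigma_{n+1}$.
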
 

It is clear from this that the collineation group of the $n$-simplex
($n\geq 2$) is a subgroup of index two in the isometry group.

\section{Birkhoff's version of the Hilbert metric}\label{sec:2} 

In \cite{Bi} Birkhoff used the Hilbert metric to analyse the spectral
properties of linear operators that leave a closed cone in a Banach space
invariant, which  led him to consider another version of the Hilbert metric.
We shall use both versions in this paper. In Birkhoff's setting, one considers
an open cone $C\subseteq\mathbb{R}^{n+1}$, that is, $C$ is open and convex
and $\lambda C\subseteq C$ for all $\lambda > 0$.
If, in addition, $\overline{C}\cap (-\overline{C})=\{0\}$, then we call $C$
a \emph{proper} open cone. An open cone $C$ induces a pre-order $\leq_C$ on
$\mathbb{R}^{n+1}$ by $x\leq_C y$ if $y-x\in \overline{C}$.
If $C$ is a proper open cone, then $\leq_C$ is also anti-symmetric and hence 
a partial ordering on $\mathbb{R}^{n+1}$. 

For $x\in C$ and $y\in\mathbb{R}^{n+1}$, define 
\begin{equation*}%\label{eq:2.1}
M(y/x;C) =\inf\{\lambda > 0\colon y\leq_C\lambda x\}.
\end{equation*}
Note that $M(y/x;C)$ is finite since $C$ is open.
Also note that, by the Hahn-Banach separation theorem, $x\leq_C y$
if and only if $\langle\phi,x\rangle\leq \langle\phi,y\rangle$
for all $\phi\in C^*$,
where $C^*=\{\phi\in\mathbb{R}^{n+1}\colon \mbox{$\langle\phi,x\rangle\geq 0$
for all $x\in \overline{C}$}\}$
is the \emph{dual cone} of $\overline{C}$. Thus, 
\begin{equation}\label{eq:2.2} 
M(y/x;C)=\sup_{\phi\in C^*\setminus\{0\}}\frac{\langle\phi,y\rangle}{\langle\phi,x\rangle}
\mbox{\quad for all $x\in C$ and $y\in\mathbb{R}^{n+1}$.}
\end{equation}

Birkhoff's version of the Hilbert metric is called
\emph{Hilbert's projective metric} on $C$ and is defined by 
\begin{equation*}%\label{eq:2.3}
d_C(x,y)=\log M(x/y;C) + \log M(y/x;C)\mbox{\quad for all }x,y\in C.
\end{equation*}
Note that $d_C(\alpha x,\beta y)=d_C(x,y)$ for all $\alpha,\beta>0$. 
It is known~\cite{Nu1} that $d_C$ is a semi-metric on the rays in $C$,
but in general not a metric, as $d_C(x,y) =0 $ does not imply $x=\alpha y$
for some $\alpha>0$. If, however, $C$ is a proper open cone,
then $d_C$ is a genuine metric on the rays in $C$.
To establish the connection with the Hilbert metric, we imagine $X$ as a subset
of a hyperplane in $\mathbb{R}^{n+1}$ that does not contain the origin.
Let $C_X$ be the cone generated by $X$ in $\mathbb{R}^{n+1}$. So, 
\[
C_X=\{\lambda x\in\mathbb{R}^{n+1}\colon \lambda> 0\mbox{ and }x\in X\}
\]
is a proper open cone in $\mathbb{R}^{n+1}$.
Birkhoff \cite{Bi} proved that $d_C$ and $d_X$ coincide on $X$. In fact, 
\begin{equation*}%\label{eq:2.4} 
\log M(x/y;C)=\log\frac{|y'x|}{|y'y|}
\mbox{\quad and \quad} \log M(y/x;C)=\log\frac{|x'y|}{|x'x|}
\end{equation*}
for all $x,y\in X$. 
We write 
\[
\mathcal{F}_C(x,y) = \log M(x/y;C) 
\]
for all $y\in C$ and $x\in\mathbb{R}^{n+1}$, and 
\[
\mathcal{RF}_C(x,y) = \log M(y/x;C)
\]
for all $x\in C$ and $y\in\mathbb{R}^{n+1}$. 

The function $\mathcal{F}_C$ is called the \emph{Funk metric} after P.~Funk
who used it in~\cite{Funk}.
It is easy to verify that
$\mathcal{F}_C(x,z)\leq \mathcal{F}_C(x,y)+\mathcal{F}_C(y,z)$
and $\mathcal{F}_C(x,x) = 0$ for all $x,y,z\in C$, but $\mathcal{F}_C(x,y)$
is neither symmetric nor non-negative.
We call $\mathcal{RF}_C$ the \emph{reverse-Funk metric}.

We have that
\begin{equation*}%\label{eq:2.6} 
d_C(x,y) = \mathcal{F}_C(x,y)+\mathcal{RF}_C(x,y)
\mbox{\quad for all $x,y\in C$}.
\end{equation*}
We write $[0]_C$ to denote the subspace
$\{x\in \mathbb{R}^{n+1}\colon
   \mbox{$x\in \overline{C}$ and $-x\in \overline{C}$}\}$.
Clearly if $z\in [0]_C$, then $\langle\phi,z\rangle=0$ for all $\phi\in C^*$.
From (\ref{eq:2.2}) we deduce that if $z\in [0]_C$ and $x,y\in C$, then 
\[
M((x+z)/y;C)=M(x/y;C)\mbox{\quad and \quad } M(y/(x+z);C) = M(y/x;C), 
\]
so that $d_C(x+z,y)=d_C(x,y)$.
Therefore, if $\Sigma$ is a cross-section of the proper open cone $C'=C/[0]_C$,
then $(\Sigma,d_{C'})$ is isometric to a Hilbert geometry with dimension
$n-\dim [0]_C$.
We call $n-\dim [0]_C$ the \emph{dimension of the Hilbert geometry on }$C$. 
 
\section{The horoboundary and the detour metric}\label{sec:3}
To prove Theorem \ref{thm:1.1}, we use results from \cite{Wa1} on the
horofunction boundary of the Hilbert geometry.
Following \cite{BGS}, recall that if $(X,d)$ is an unbounded locally-compact
metric space, then to each $z\in X$ a continuous function
$\phi_{z,b}\colon X\to\mathbb{R}$, with 
\[
\phi_{z,b}(x)=d(x,z)-d(b,z)\mbox{\quad for }x\in X, 
\]
is assigned. Here $b\in X$ is a fixed \emph{base-point}.
The map $\Phi\colon X\to C(X)$ given by $\Phi(z)=\phi_{z,b}$ embeds $X$
into the space of continuous functions on $X$, which is endowed with the
topology of uniform convergence on compact subsets of $X$.
The \emph{horoboundary} of $X$ is defined by
\[
X(\infty)=\overline{\Phi(X)}\setminus\Phi(X),
\] 
and its members are called \emph{horofunctions}.
Since $X$ is locally compact, the space $X\cup X(\infty)$ is a compactification
of $X$, and so every unbounded sequence $(z_k)_k$ in $X$
has a subsequence such that $\phi_{z_k,b}$ converges to a point in $X(\infty)$.

It is easy to verify that, for any alternative base-point $b'$,
\[
\phi_{z_k,b'}(x) = \phi_{z_k,b}(x)-\phi_{z_k,b}(b').
\]
Therefore, if $\phi_{z_k,b}$ converges to $\xi$, then $\phi_{z_k,b'}$
converges to $\xi-\xi(b')$.

If $r\colon [0,\infty)\to X$ is a geodesic ray, then $\phi_{r(t),r(0)}(x)$
is non-increasing and bounded below by $-d(r(0),x)$. Therefore, each geodesic
ray yields a horofunction. More generally, one obtains a horofunction from
each ``almost-geodesic'', a concept introduced by Rieffel~\cite{Rief}.
A map $\gamma\colon T\to X$, with $T$ an unbounded subset of $\mathbb{R}$
and $0\in T$, is called an \emph{almost-geodesic} if for each $\epsilon>0$
there exists $M\geq 0$ such that
\begin{equation}\label{eq:3.1} 
|d(\gamma(t),\gamma(s))+d(\gamma(s),\gamma(0))-t|<\epsilon
\mbox{\quad for all $s,t\in T$ with $t\geq s\geq M$}.
\end{equation}
Rieffel~\cite{Rief} proved that, for any almost-geodesic $\gamma\colon T\to X$,
the quantity
$d(x,\gamma(t))-d(b,\gamma(t))$ converges to some limit $\xi(x)$
for each $x\in X$. In this case, we say that $\gamma$ \emph{converges to} $\xi$.
A horofunction $\xi\in X(\infty)$ is called a \emph{Busemann point} if there
exists an almost-geodesic converging to it.
We denote by $X_B(\infty)$ the set of all Busemann points in $X(\infty)$.

It was shown in \cite{AGW} that the Busemann points can also be obtained
as limits of $\epsilon$-almost-geodesics. Recall that a sequence $(x_k)_k$
in $X$ is called an \emph{ $\epsilon$-almost-geodesic} if 
\[
d(x_0,x_1)+\dots+d(x_m,x_{m+1})\leq d(x_0,x_{m+1})+\epsilon
\mbox{\quad for all $m\geq 0$}.
\]
In fact, it was shown in~\cite[Proposition 7.12]{AGW} that every
almost-geodesic has a subsequence that is an $\epsilon$-almost-geodesic
for some $\epsilon>0$, and, conversely, every unbounded
$\epsilon$-almost-geodesic has a subsequence that is an almost-geodesic.  

For any two Busemann points $\xi$ and $\eta$, we define the \emph{detour cost}
by 
\begin{align*}
%\label{eq:3.2}
H(\xi,\eta) &=\sup_{W\ni\xi} \inf_{x\in W} d(b,x)+\eta(x), 
\end{align*}
where the supremum is taken over all neighbourhoods $W$ of $\xi$ in the
compactification $X\cup X(\infty)$.
This concept originated in~\cite{AGW}.
An equivalent definition is
\begin{align}\label{eq:3.2}
H(\xi,\eta) &= \inf_{\gamma} \liminf_{t\to\infty}
                    d(b,\gamma(t))+\eta(\gamma(t)), 
\end{align}
where the infimum is taken over all paths $\gamma:T\to X$ converging to $\xi$.

The following is a special case of~\cite[Lemma 3.3]{Wa0}.
\begin{lemma}\label{lem:3.2} 
Let $\gamma$ be an almost-geodesic converging to a Busemann point $\xi$.
Then,
\begin{equation*}
\lim_{t\to\infty} d(b,\gamma(t)) + \xi(\gamma(t)) = 0.
\end{equation*}
Moreover, for any horofunction $\eta$,
\begin{equation*}
\lim_{t\to\infty} d(b,\gamma(t)) + \eta(\gamma(t)) = H(\xi,\eta).
\end{equation*}
\end{lemma}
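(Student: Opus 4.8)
The plan is to treat the two assertions separately, relying on the almost-geodesic inequality~(\ref{eq:3.1}), the definition of the functions $\phi_{z,b}$, and the two descriptions of the detour cost $H$. For the first identity I would extract two asymptotics. Setting $s=t$ in~(\ref{eq:3.1}) gives $|d(\gamma(0),\gamma(t))-t|<\epsilon$ for $t\geq M$, so $d(\gamma(0),\gamma(t))=t+o(1)$; feeding this, together with the inequality $d(\gamma(s),\gamma(t))=s-d(\gamma(0),\gamma(t))+o(1)$, into the defining limit $\xi(\gamma(t))=\lim_{s\to\infty}\bigl(d(\gamma(t),\gamma(s))-d(b,\gamma(s))\bigr)$ and using $d(\gamma(0),\gamma(s))-d(b,\gamma(s))\to\xi(\gamma(0))$ yields $\xi(\gamma(t))+d(\gamma(0),\gamma(t))\to\xi(\gamma(0))$. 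Separately, since $\phi_{\gamma(t),b}\to\xi$ pointwise, evaluating at $\gamma(0)$ gives $d(\gamma(0),\gamma(t))-d(b,\gamma(t))\to\xi(\gamma(0))$. Adding the two limits cancels $\xi(\gamma(0))$ and delivers $d(b,\gamma(t))+\xi(\gamma(t))\to 0$; the triangle inequality shows this quantity is nonnegative, which is a useful check.

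For the second identity I would write $u(t)=d(b,\gamma(t))+\eta(\gamma(t))$ and $v(t)=\eta(\gamma(t))-\xi(\gamma(t))$, so that the first part gives $u(t)=v(t)+o(1)$ and it suffices to understand $v$. I would show $v$ is asymptotically non-decreasing: horofunctions are $1$-Lipschitz, so $\eta(\gamma(s))-\eta(\gamma(t))\geq -d(\gamma(t),\gamma(s))$, whereas the first part upgrades to $\xi(\gamma(s))-\xi(\gamma(t))=-d(\gamma(t),\gamma(s))+o(1)$ (combining $\xi(\gamma(r))=\xi(\gamma(0))-d(\gamma(0),\gamma(r))+o(1)$ with $d(\gamma(t),\gamma(s))=s-t+o(1)$). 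Subtracting gives $v(s)-v(t)\geq -o(1)$ for $s\geq t$, so $v(t)$, and hence $u(t)$, converges to some $V\in(-\infty,+\infty]$.

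It then remains to identify $V$ with $H(\xi,\eta)$. By~(\ref{eq:3.2}), since $\gamma$ converges to $\xi$, one has $H(\xi,\eta)\leq\liminf_t u(t)=V$. For the reverse I would prove the lower-semicontinuity bound $\liminf_{z\to\xi}\bigl(d(b,z)+\eta(z)\bigr)\geq V$: the Lipschitz estimate gives $d(b,z)+\eta(z)\geq\eta(\gamma(t))+\bigl(d(b,z)-d(z,\gamma(t))\bigr)$, and as $z\to\xi$ the convergence $\phi_{z,b}\to\xi$ forces $d(z,\gamma(t))-d(b,z)\to\xi(\gamma(t))$, so the right-hand side tends to $v(t)$; letting $t\to\infty$ produces the bound $V$. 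Inserting this into the neighbourhood description $H(\xi,\eta)=\sup_{W\ni\xi}\inf_{x\in W}\bigl(d(b,x)+\eta(x)\bigr)$ yields $H(\xi,\eta)\geq V$, whence $\lim_t u(t)=V=H(\xi,\eta)$.

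The main obstacle I anticipate is the second identity, not the first: it is immediate that $d(b,\gamma(t))+\eta(\gamma(t))$ stays at or above $H(\xi,\eta)$, but one must show it actually descends to that value, i.e.\ that the almost-geodesic realizes the infimum in~(\ref{eq:3.2}) and so is an \emph{optimal} approach to $\xi$. This is exactly what the lower-semicontinuity step supplies, and it is the place where the almost-geodesic hypothesis is genuinely used, through the refined asymptotic $\xi(\gamma(s))-\xi(\gamma(t))=-d(\gamma(t),\gamma(s))+o(1)$; without it the auxiliary function $v$ need not be monotone and the limit could fail either to exist or to equal $H(\xi,\eta)$.
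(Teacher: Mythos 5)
Your proof is correct, but the second identity is established by a genuinely different mechanism than the paper's. For the first identity the two arguments essentially coincide: both squeeze $d(b,\gamma(t))+\xi(\gamma(t))$ between bounds coming from the almost-geodesic inequality~(\ref{eq:3.1}) (you additionally use the normalisation $d(\gamma(0),\gamma(t))=t+o(1)$, which the paper's version avoids by working only with the superadditivity-type inequality, but this is cosmetic). For the second identity, both proofs get $H(\xi,\eta)\le\liminf_t\bigl(d(b,\gamma(t))+\eta(\gamma(t))\bigr)$ directly from the path formulation~(\ref{eq:3.2}); the difference lies in the reverse bound. The paper first derives the global pointwise inequality $\eta(x)\le\xi(x)+H(\xi,\eta)$ on all of $X$, by applying the triangle-inequality observation $\eta(x)\le\bigl(d(x,z)-d(b,z)\bigr)+\bigl(d(b,z)+\eta(z)\bigr)$ along near-optimal paths in~(\ref{eq:3.2}), and then evaluates it along $\gamma$, invoking the first identity to conclude $\limsup_t\bigl(d(b,\gamma(t))+\eta(\gamma(t))\bigr)\le H(\xi,\eta)$. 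You instead exploit the neighbourhood definition of the detour cost: since $H(\xi,\eta)$ is precisely the limit inferior of $d(b,z)+\eta(z)$ along the filter of neighbourhoods of $\xi$ in the compactification, your Lipschitz estimate $d(b,z)+\eta(z)\ge\eta(\gamma(t))+\bigl(d(b,z)-d(z,\gamma(t))\bigr)$ together with $d(\gamma(t),z)-d(b,z)\to\xi(\gamma(t))$ yields $H(\xi,\eta)\ge\eta(\gamma(t))-\xi(\gamma(t))$ for every $t$, which the first identity converts into $H(\xi,\eta)\ge\limsup_t\bigl(d(b,\gamma(t))+\eta(\gamma(t))\bigr)$. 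What your route buys is that it needs no construction of comparison paths realising the infimum in~(\ref{eq:3.2}); what the paper's route buys is the reusable global bound $\eta\le\xi+H(\xi,\eta)$. One economy worth noting: your asymptotic-monotonicity argument for $v(t)=\eta(\gamma(t))-\xi(\gamma(t))$ is redundant, since once you have $H(\xi,\eta)\ge\limsup_t u(t)$ and $H(\xi,\eta)\le\liminf_t u(t)$ with $u=v+o(1)$, existence of the limit is automatic; correspondingly, your closing diagnosis is slightly off --- the almost-geodesic hypothesis enters your proof only through the first identity (the translation between $u$ and $v$), not through the monotonicity of $v$.
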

\begin{proof}
Let $\epsilon>0$ and assume that $b=\gamma(0)$.
As $\gamma$ is an almost-geodesic we have that 
\[
d(\gamma(0),\gamma(t))
   \geq d(\gamma(0),\gamma(s)) + d(\gamma(s),\gamma(t)) - \epsilon
\]
for all $s$ and $t$ sufficiently large, with $s\le t$.
Subtracting $d(\gamma(0),\gamma(t))$ from both sides and letting
$t$ tend to infinity gives
\[
0\geq d(\gamma(0),\gamma(s))+\xi(\gamma(s))-\epsilon
\mbox{\quad for all $s$ sufficiently large.}
\]
This implies that 
\[
\limsup_{s\to\infty} d(\gamma(0),\gamma(s))+\xi(\gamma(s))\leq 0.
\]
As $d(\gamma(0),\gamma(s))+d(\gamma(s),\gamma(t))-d(\gamma(0),\gamma(t))\geq 0$ for all $t$, we see that 
\[
\liminf_{s\to\infty} d(\gamma(0),\gamma(s))+\xi(\gamma(s))\geq 0, 
\]
which proves the first statement when $b=\gamma(0)$.
The equality for general $b$ follows from the fact that if $\gamma$ converges
to $\xi$ with respect to the base-point $\gamma(0)$, then $\gamma$ converges
to $\xi'=\xi-\xi(b)$ with respect to the base-point $b$. 

Observe that
\begin{align*}
\eta(x) \le \Big( d(x,z) - d(b,z) \Big) + \Big( d(b,z) + \eta(z) \Big)
\quad \text{for all $x$ and $z$ in $X$}.
\end{align*}
It follows that
\begin{align*}
\eta(x) \le \xi(x) + H(\xi,\eta)
\quad \text{for all $x$ in $X$}.
\end{align*}
So,
\begin{align*}
d(b,\gamma(t)) + \eta(\gamma(t))
   \le d(b,\gamma(t)) + \xi(\gamma(t)) + H(\xi,\eta)
\quad \text{for all $t$}.
\end{align*}
Taking the limit supremum as $t$ tends to infinity and using the first part
of the lemma, we get that
\begin{align*}
\limsup_{t\to\infty} d(b,\gamma(t)) + \eta(\gamma(t)) \le H(\xi,\eta).
\end{align*}
The lower bound on the limit infimum follows from~(\ref{eq:3.2}):
\begin{equation*}
H(\xi,\eta)
   \le \liminf_{t\to\infty} d(b,\gamma(t)) + \eta(\gamma(t)) .
\end{equation*}
Thus, the second statement is proved.
\end{proof}
In particular, we see that $\lim_{k\to\infty} d(b,\gamma(t))+\eta(\gamma(t))$ is independent of the almost geodesic $\gamma$ converging to $\xi$. 

By symmetrising the detour cost, the set of Busemann points can be equipped
with a metric. For $\xi$ and $\eta$ in $X_B(\infty)$, we define 
\begin{equation}\label{eq:3.4}
\delta(\xi,\eta) = H(\xi,\eta)+H(\eta,\xi) 
\end{equation}
and call $\delta$ the \emph{detour metric}. 
This construction appears in~\cite[Remark~5.2]{AGW}.
\begin{proposition}\label{prop:3.3} 
The function $\delta\colon X_B(\infty)\times X_B(\infty)\to [0,\infty]$
is a metric, which might take the value $+\infty$.  
\end{proposition}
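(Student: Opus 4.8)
The plan is to verify the three metric axioms for $\delta$, allowing the value $+\infty$ throughout and keeping the usual conventions for arithmetic in $[0,\infty]$. Symmetry, $\delta(\xi,\eta)=\delta(\eta,\xi)$, is immediate from the defining formula~(\ref{eq:3.4}). The content of the proposition lies in the triangle inequality and in showing that $\delta(\xi,\eta)=0$ forces $\xi=\eta$.

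The central tool will be the pointwise inequality
\[
\eta(x)\le \xi(x)+H(\xi,\eta)\quad\text{for all }x\in X,
\]
which is established in the course of proving Lemma~\ref{lem:3.2}. First I would record two consequences. Putting $x=b$ and using that every horofunction satisfies $\xi(b)=\eta(b)=0$ (being a limit of functions $\phi_{z,b}$ that vanish at $b$) gives $H(\xi,\eta)\ge 0$; hence $H$, and therefore $\delta$, takes values in $[0,\infty]$. Taking $\eta=\xi$ and combining with the first assertion of Lemma~\ref{lem:3.2} gives $H(\xi,\xi)=0$, so that $\delta(\xi,\xi)=0$.

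Next I would prove a triangle inequality for the detour cost itself, namely $H(\xi,\zeta)\le H(\xi,\eta)+H(\eta,\zeta)$ for all Busemann points $\xi,\eta,\zeta$. Applying the central inequality to the pair $(\eta,\zeta)$ yields $\zeta(x)\le\eta(x)+H(\eta,\zeta)$, so that
\[
d(b,x)+\zeta(x)\le\big(d(b,x)+\eta(x)\big)+H(\eta,\zeta)\quad\text{for every }x\in X.
\]
Taking the infimum over $x$ in a neighbourhood $W$ of $\xi$ and then the supremum over all such $W$ in the definition of $H$ gives the claim. Combined with symmetry this produces the triangle inequality for $\delta$:
\[
\delta(\xi,\zeta)=H(\xi,\zeta)+H(\zeta,\xi)\le H(\xi,\eta)+H(\eta,\zeta)+H(\zeta,\eta)+H(\eta,\xi)=\delta(\xi,\eta)+\delta(\eta,\zeta).
\]

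Finally, for the identity of indiscernibles, suppose $\delta(\xi,\eta)=0$. Since $H(\xi,\eta)$ and $H(\eta,\xi)$ are both non-negative and sum to zero, each vanishes. The central inequality with $H(\xi,\eta)=0$ gives $\eta(x)\le\xi(x)$ for all $x$, while the same inequality with the roles of $\xi$ and $\eta$ interchanged and $H(\eta,\xi)=0$ gives $\xi(x)\le\eta(x)$ for all $x$; hence $\xi=\eta$ as functions on $X$. I expect the only points requiring care to be the bookkeeping around the value $+\infty$ and the normalization $\xi(b)=0$ underlying the non-negativity of $H$; the rest reduces to chaining the single pointwise inequality furnished by Lemma~\ref{lem:3.2}.
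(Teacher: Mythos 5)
Your proof is correct, but it takes a genuinely different route from the paper's. The paper verifies non-negativity, separation, and the triangle inequality by choosing almost-geodesics $\gamma$, $\lambda$, $\kappa$ converging to the Busemann points involved and manipulating iterated limits of expressions such as $d(b,\gamma(t))+d(\gamma(t),\lambda(s))-d(b,\lambda(s))$, invoking Lemma~\ref{lem:3.2} at each stage. You instead isolate the single inequality $\eta(x)\le\xi(x)+H(\xi,\eta)$ (which the paper indeed establishes inside the proof of Lemma~\ref{lem:3.2}, and whose derivation needs only the $1$-Lipschitz property of horofunctions and the sup-inf definition of $H$) and deduce everything from it: non-negativity by setting $x=b$ and using the normalisation $\xi(b)=\eta(b)=0$; the detour-cost triangle inequality $H(\xi,\zeta)\le H(\xi,\eta)+H(\eta,\zeta)$ directly from the sup-inf definition, with no limit interchanges; and separation in two lines once $H(\xi,\eta)=H(\eta,\xi)=0$. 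What your approach buys is brevity and robustness --- no representative almost-geodesics need to be chosen --- and it makes visible that the Busemann hypothesis enters only once, namely to get $\delta(\xi,\xi)=0$ via Lemma~\ref{lem:3.2} (for a general horofunction $H(\xi,\xi)$ need not vanish); every other axiom in your argument holds for arbitrary horofunctions. What the paper's route buys is uniformity: all verifications stay within the same almost-geodesic limit calculus used throughout Section~\ref{sec:3}, at the cost of reproving along the way facts your approach gets for free. One phrasing quibble, not a gap: your step ``taking $\eta=\xi$ and combining with the first assertion of Lemma~\ref{lem:3.2}'' really uses both assertions of that lemma --- the second identifies $\lim_t d(b,\gamma(t))+\xi(\gamma(t))$ with $H(\xi,\xi)$, the first says this limit is zero.
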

\begin{proof}
Clearly $\delta$ is symmetric.

Let $\gamma$ and $\lambda$ be almost-geodesics converging, respectively,
to $\xi$ and $\eta$. From the triangle inequality we get that 
\[
d(b,\gamma(t))+d(\gamma(t),\lambda(s))-d(b,\lambda(s))\geq 0. 
\]
Letting $s$ tend to infinity, we find that
\begin{equation}\label{eq:3.5}
d(b,\gamma(t))+\eta(\gamma(t))\geq 0,
\end{equation}
so that $H(\xi,\eta)\geq 0$. We conclude that $\delta$
is non-negative.

From Lemma \ref{lem:3.2}, it follows that $\delta(\xi,\xi)= 0$
for all $\xi\in X_B(\infty)$.

Now suppose that $\delta(\xi,\eta)=0$. To show that $\xi=\eta$, we let $x\in X$.
By~(\ref{eq:3.5}) we know that, for all $s$,
\begin{eqnarray*}
d(x,\gamma(t))-d(b,\gamma(t))
  &\leq& d(x,\lambda(s))+d(\lambda(s),\gamma(t))+\eta(\gamma(t))\\
  &=& d(x,\lambda(s))+\big{(}d(\lambda(s),\gamma(t))- d(b,\gamma(t))\big{)} \\
  & & \quad\quad  + \big{(}d(b,\gamma(t))+\eta(\gamma(t))\big{)}. 
\end{eqnarray*}
Taking the limit as $t$ tends to infinity gives, by Lemma~\ref{lem:3.2},
\begin{eqnarray*}
\xi(x)
   &\leq&  d(x,\lambda(s))+\xi(\lambda(s))+ H(\xi,\eta)\\
   &=&  \big{(}d(x,\lambda(s))-d(b,\lambda(s))\big{)}
       +\big{(} d(b,\lambda(s)) + \xi(\lambda(s))\big{)} + H(\xi,\eta). 
\end{eqnarray*}
Subsequently letting $s$ tend to infinity shows that 
\[
\xi(x) \leq   \eta(x)+ H(\eta,\xi) + H(\xi,\eta) = \eta(x).
\]
Interchanging the roles of $\xi$ and $\eta$ gives the desired equality. 

It remains to show that $\delta$ satisfies the triangle inequality. 
Let $\xi$, $\eta$, and $\nu$ be Busemann points with respective
almost-geodesics $\gamma$, $\lambda$, and $\kappa$. Clearly 
\begin{align*}
d(b,\gamma(t)) +d(\gamma(t),\kappa(u)) - d(b,\kappa(u))  & \leq 
  d(b,\gamma(t)) + d(\gamma(t),\lambda(s)) -d(b,\lambda(s)) \\ 
    & \quad + d(b,\lambda(s)) +d(\lambda(s),\kappa(u)) - d(b,\kappa(u)).
  \end{align*}
Taking the limits as $u$, $s$, and then $t$ tend to infinity, we get that
$H(\xi,\nu)\leq H(\xi,\eta)+H(\eta,\nu)$, which implies that $\delta$ satisfies
the triangle inequality.  
\end{proof} 

Note that we can partition $X_B(\infty)$ into disjoint subsets such that
$\delta(\xi,\eta)$ is finite for each pair of horofunctions $\xi$ and $\eta$
lying in the same subset. We call these subsets the \emph{parts} of the
horofunction boundary of $(X,d)$, and $\delta$ is a genuine metric on each
one.

Consider an isometry $g$ from one metric space $(X,d)$ to another $(Y,d')$.
We can extend $g$ to the horofunction boundary $X(\infty)$ of $X$ as follows:
\[
g(\xi)(y) = \xi(g^{-1}(y))-\xi(g^{-1}(b')),
\]
for all $\xi\in X(\infty)$ and $y\in Y$. Here $b'$ is the base-point in $Y$.
Observe that if $\lambda\colon T\to X$ is a path converging to a horofunction
$\xi$, then $g\circ\lambda$ converges to $g(\xi)$ in the horofunction
compactification $Y\cup Y(\infty)$ of $Y$.
If, furthermore, $\lambda$ is an almost-geodesic, then $g\circ \lambda$
is an almost-geodesic in $(Y,d')$.

The following lemma shows that $g$ is an isometry on $X_B(\infty)$
with respect to the detour metric. 
The first part has appeared in~\cite[Remark~5.2]{AGW}.
\begin{lemma}\label{lem:3.4} 
The detour metric $\delta$ is independent of the base-point.
Moreover, if $g\colon (X,d)\to (Y,d')$ is an isometry of $X$ onto $Y$, then
\[
\delta(\xi,\eta)=\delta(g(\xi),g(\eta))
\mbox{\quad for all $\xi,\eta\in X(\infty)$}.
\]
\end{lemma}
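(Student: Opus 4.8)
The plan is to reduce both assertions to the limit formula of Lemma~\ref{lem:3.2}: if $\gamma$ is an almost-geodesic converging to the Busemann point $\xi$, then $H(\xi,\eta)=\lim_{t\to\infty} d(b,\gamma(t))+\eta(\gamma(t))$ for every horofunction $\eta$. Throughout I restrict to Busemann points, on which $\delta$ is defined; since $g$ carries almost-geodesics to almost-geodesics, it maps $X_B(\infty)$ bijectively onto $Y_B(\infty)$, so the displayed equality makes sense for the boundary points in question. The advantage of the limit formula is that each $H$ may be evaluated along a single conveniently chosen almost-geodesic, so the whole argument reduces to tracking how the two ingredients $d(b,\gamma(t))$ and $\eta(\gamma(t))$ respond to a change of base-point or to the isometry $g$.

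For base-point independence, I fix base-points $b$ and $b'$ and write $H_b$, $H_{b'}$ for the corresponding detour costs. As recorded just before Lemma~\ref{lem:3.2}, a horofunction $\xi$ relative to $b$ is the same boundary point as $\xi'=\xi-\xi(b')$ relative to $b'$, and an almost-geodesic $\gamma$ converging to $\xi$ with respect to $b$ also converges to $\xi'$ with respect to $b'$. The one further input needed is
\[
d(b',\gamma(t))-d(b,\gamma(t))=\phi_{\gamma(t),b}(b')\longrightarrow \xi(b')
\quad (t\to\infty),
\]
which is immediate from the definition of $\phi_{\gamma(t),b}$ and the convergence $\phi_{\gamma(t),b}\to\xi$. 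Substituting this together with $\eta'(\gamma(t))=\eta(\gamma(t))-\eta(b')$ into the limit formula gives
\[
H_{b'}(\xi',\eta')=H_b(\xi,\eta)+\xi(b')-\eta(b').
\]
The correction term is antisymmetric in $(\xi,\eta)$, so it cancels upon forming $\delta=H(\xi,\eta)+H(\eta,\xi)$, and hence $\delta_{b'}(\xi',\eta')=\delta_b(\xi,\eta)$.

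For the isometry statement, base-point independence allows me to assume that the base-point in $Y$ is $b'=g(b)$. With this choice $g^{-1}(b')=b$ and $\eta(b)=0$, so the extension formula collapses to $g(\eta)(y)=\eta(g^{-1}(y))$. Given an almost-geodesic $\gamma$ converging to $\xi$ in $X$, the observation preceding the lemma guarantees that $g\circ\gamma$ is an almost-geodesic converging to $g(\xi)$ in $Y$. Evaluating $H_Y(g(\xi),g(\eta))$ along $g\circ\gamma$ and using that $g$ is an isometry yields $d'(g(b),g(\gamma(t)))=d(b,\gamma(t))$ and $g(\eta)(g(\gamma(t)))=\eta(\gamma(t))$, whence $H_Y(g(\xi),g(\eta))=H_X(\xi,\eta)$. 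Summing with the analogous identity obtained by interchanging $\xi$ and $\eta$ proves $\delta(g(\xi),g(\eta))=\delta(\xi,\eta)$.

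The arguments are short once Lemma~\ref{lem:3.2} is in hand; the only genuine subtlety is in the first part, where one must notice that the base-point dependence of the \emph{asymmetric} quantity $H$ is precisely the antisymmetric term $\xi(b')-\eta(b')$, so that it vanishes under symmetrisation. Everything else is routine substitution, and no estimate beyond the limit formula and the elementary transformation rules for horofunctions is required.
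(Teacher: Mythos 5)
Your proof is correct, and its skeleton is the one the paper uses: derive the transformation law $H(\hat\xi,\hat\eta)=H(\xi,\eta)+\xi(\hat b)-\eta(\hat b)$ and observe that the correction term is antisymmetric in $(\xi,\eta)$, hence cancels upon forming $\delta=H(\xi,\eta)+H(\eta,\xi)$. The genuine difference is the evaluation tool. The paper computes directly from the infimum-over-paths formula~(\ref{eq:3.2}), which requires no almost-geodesic and therefore establishes the transformation law for \emph{arbitrary} horofunctions, matching the statement's ``for all $\xi,\eta\in X(\infty)$''; you instead evaluate $H$ along a single almost-geodesic via Lemma~\ref{lem:3.2}, which is cleaner but confines the argument to Busemann points. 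Since $\delta$ is only defined on $X_B(\infty)$ in~(\ref{eq:3.4}), your restriction is defensible (and you flag it explicitly), but strictly speaking the paper's route covers more of what the lemma literally asserts. Your second part contains a nice simplification the paper does not make: normalising $b'=g(b)$ kills the correction term, so $H$ itself, not merely $\delta$, is preserved, whereas the paper keeps a general $b'$ and obtains $H(g(\xi),g(\eta))=\xi(g^{-1}(b'))+H(\xi,\eta)-\eta(g^{-1}(b'))$, relying again on symmetrisation. One point you should make explicit in that reduction: the boundary extension of $g$ depends on the base-point of $Y$, so when you re-base from $b'$ to $g(b)$ you need the (one-line) compatibility check that the extension relative to $b'$ equals the extension relative to $g(b)$ minus the re-basing constant $\xi(g^{-1}(b'))$, i.e.\ that it is precisely the re-based horofunction to which the first part of the lemma applies. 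With that remark added, your argument is complete.
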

\begin{proof}
Let $\xi$ and $\eta$ be horofunctions with respect to the base-point $b\in X$.
Now let $\hat b\in X$ and note that $\hat\xi = \xi -\xi(\hat b)$ and
$\hat\eta=\eta-\eta(\hat b)$
are the corresponding horofunctions when using $\hat b$ as the base-point
instead of $b$.
So,
\begin{align*}
H(\hat\xi,\hat\eta)
   &=\inf_\gamma \liminf_{t\to\infty} d(\hat b,\gamma(t)) + \hat\eta(\gamma(t)) \\
   &=\inf_\gamma \liminf_{t\to\infty}
                    d(\hat b,\gamma(t)) - d(b,\gamma(t)) +d(b,\gamma(t))+ \eta(\gamma(t))- \eta(\hat b)\\
   &= \xi(\hat b) + H(\xi,\eta) -\eta(\hat b),
\end{align*}   
where each time the infimum is taken over all paths converging to $\xi$.
This implies that $\delta(\hat\xi,\hat\eta)=\delta(\xi,\eta)$. 

Let $b'$ be the base-point of $Y$. We have
\begin{align*}
H(g(\xi),g(\eta))
   &= \inf_\gamma \liminf_{t\to\infty}
         d'(b',g(\gamma(t)))+\eta(\gamma(t))-\eta(g^{-1}(b')) \\
   &= \inf_\gamma \liminf_{t\to\infty}
         d(g^{-1}(b'),\gamma(t))-d(b,\gamma(t)) +d(b,\gamma(t)) + \eta(\gamma(t)) -
         \eta( g^{-1}(b'))\\
                  & = \xi(g^{-1}(b')) + H(\xi,\eta) - \eta( g^{-1}(b')),
\end{align*}   
where the infimum is as before.
We conclude that $g$ preserves the detour cost.
\end{proof}

\section{Parts of the horoboundary of a Hilbert geometry} \label{sec:4}

In this section, we describe the detour metric on parts of the horoboundary of
a Hilbert geometry using the characterisation of its Busemann points
obtained in \cite{Wa1}.
To present the results it is convenient to work with Hilbert's projective
metric. We begin by recalling some notions from \cite{Wa1}.
Given an open cone $C\subseteq\mathbb{R}^{n+1}$,
the \emph{open tangent cone at} $z\in\partial C$ is defined by
\[
\tau(C,z) =
   \{\lambda(x-z)\in\mathbb{R}^{n+1}\colon \mbox{$\lambda>0$ and $x\in C$}\}.
\]
Observe that $C=\tau(C,0)$.

\begin{lemma}
\label{lem:tangent_cone_formula}
For each $ z\in\partial C$ we have
\[\tau(C,z)=\{u\in\mathbb{R}^{n+1}\colon \langle \phi, u\rangle>0\mbox{ for all } \phi\in C^*\setminus\{0\} \mbox{ with }\langle \phi, z\rangle =0\}.\]
\end{lemma}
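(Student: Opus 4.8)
The plan is to prove the two inclusions separately, after first recording the dual description of membership in the open cone itself. Since $C$ is open and convex with nonempty interior, the Hahn--Banach separation argument already used in Section~\ref{sec:2} gives not only $x\in\overline C \iff \langle\phi,x\rangle\ge 0$ for all $\phi\in C^*$, but also the strict version
\[
C=\{x\in\mathbb{R}^{n+1}\colon \langle\phi,x\rangle>0 \text{ for all }\phi\in C^*\setminus\{0\}\}.
\]
I would establish this first: if $x\in C$ and $\langle\phi,x\rangle=0$ for some $\phi\in C^*\setminus\{0\}$, then the linear functional $\langle\phi,\cdot\rangle$ would attain a minimum over the open set $C$ at an interior point, forcing $\phi=0$; conversely a point with all such inner products positive cannot be separated from $C$ by a supporting hyperplane, so it lies in $C$. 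Writing $D$ for the right-hand side of the lemma, note that $u\in\tau(C,z)$ exactly when $z+tu\in C$ for some $t>0$ (take $t=1/\lambda$), and $z+tu\in C$ means $\langle\phi,z\rangle+t\langle\phi,u\rangle>0$ for every $\phi\in C^*\setminus\{0\}$.

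For the inclusion $\tau(C,z)\subseteq D$, I would take $u=\lambda(x-z)$ with $\lambda>0$ and $x\in C$, and a functional $\phi\in C^*\setminus\{0\}$ with $\langle\phi,z\rangle=0$. Then $\langle\phi,u\rangle=\lambda\langle\phi,x\rangle$, which is strictly positive by the displayed characterisation of $C$; hence $u\in D$.

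The reverse inclusion $D\subseteq\tau(C,z)$ is where the real work lies. Given $u\in D$, I must produce a single $t>0$ with $\langle\phi,z\rangle+t\langle\phi,u\rangle>0$ simultaneously for all $\phi\in C^*\setminus\{0\}$. By homogeneity it suffices to range over the compact set $S=\{\phi\in C^*\colon\|\phi\|=1\}$. On the compact subset $Z=\{\phi\in S\colon\langle\phi,z\rangle=0\}$ the function $\phi\mapsto\langle\phi,u\rangle$ is positive (this is the hypothesis $u\in D$), hence bounded below by some $m>0$, and by continuity it stays above $m/2$ on a neighbourhood $U$ of $Z$ in $S$; there $\langle\phi,z\rangle\ge 0$ gives $\langle\phi,z\rangle+t\langle\phi,u\rangle>0$ for every $t>0$. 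On the complement $S\setminus U$, which is compact and disjoint from $Z$, the function $\phi\mapsto\langle\phi,z\rangle$ is continuous and strictly positive, hence bounded below by some $\epsilon>0$, while $\langle\phi,u\rangle$ is bounded below by some $-M$; choosing $0<t<\epsilon/M$ then makes the expression positive there as well. For such $t$ we get $z+tu\in C$, so $u=\tfrac1t\big((z+tu)-z\big)\in\tau(C,z)$.

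The main obstacle is exactly this uniform-positivity step: the naive pointwise bound does not yield one $t$ that works for all $\phi$ at once, and one must split $S$ according to whether $\langle\phi,z\rangle$ vanishes and invoke compactness on each piece. I would also check the degenerate possibilities --- $Z=\emptyset$ (then $\langle\cdot,z\rangle$ is uniformly positive on $S$ and any small $t$ works) and $z=0$ (then $Z=S$, $D=C=\tau(C,0)$) --- but these fall out of the same argument.
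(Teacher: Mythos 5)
Your proof is correct and follows essentially the same route as the paper's: the forward inclusion is immediate, and the reverse inclusion is obtained by restricting to the unit sphere of $C^*$, splitting it into a neighbourhood of the compact set $Z=\{\phi\in C^*\colon \|\phi\|=1,\ \langle\phi,z\rangle=0\}$ and its complement (the paper's $W_1$ and $W_2$), extracting uniform positive lower bounds by compactness on each piece, and choosing $t$ small enough that $z+tu\in C$. The only differences are cosmetic refinements on your side: you make explicit the strict dual characterisation $C=\{x\colon\langle\phi,x\rangle>0\ \forall\phi\in C^*\setminus\{0\}\}$, which the paper uses implicitly in concluding $x\in C$, and your choice $0<t<\epsilon/M$ sidesteps the paper's slightly awkward bound $0<\mu<|\gamma/\beta|$, which is ill-defined when $\beta=0$.
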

\begin{proof}
The inclusion $\subseteq$ is clear. To prove the opposite inclusion let
$Z=\{ \phi\in C^*\setminus\{0\} \colon \langle \phi, z\rangle =0\mbox{ and }\|\phi \|=1\}$. Suppose that $u\in\mathbb{R}^{n+1}$ is such that $\langle \phi, u\rangle > 0 $ for all $\phi \in Z$.  As $Z$ is compact, $\alpha =\min_{\phi \in Z} \langle \phi, u\rangle>0$. Let $0<\epsilon <\alpha/\|u\|$ and
$W_1=\{\psi \in C^*\setminus\{0\}\colon \|\psi\|=1 \mbox{ and }\|\psi -\phi\|<\epsilon 
\mbox{ for some }\phi\in Z\}$.
Then
\begin{eqnarray*}
\langle \psi, u\rangle & = &\langle \phi,u\rangle +\langle \psi,u\rangle-\langle \phi,u\rangle  \\
 & \geq & \langle \phi,u\rangle -\|\psi-\phi\|\|u\|\\
  & \geq & \alpha -\epsilon \|u\|>0, 
  \end{eqnarray*}
  where $\phi\in Z$ with $\|\psi-\phi\|<\epsilon$.
Now let
$W_2=\{\psi \in C^*\setminus\{0\}\colon \|\psi\|=1 \mbox{ and }\|\psi -\phi\|\geq \epsilon 
\mbox{ for all }\phi\in Z\}$. Denote
$\beta =\min_{\psi\in W_2}  \langle \psi,u\rangle$ and
$\gamma = \min_{\psi\in W_2} \langle \psi,z\rangle>0$.
Note that it suffices to show that $x=\mu u +z\in C$ for some $\mu>0$. Take
$0<\mu<|\gamma/\beta|$ and remark that if $\psi\in W_2$, then
\[
\langle \psi,\mu u\rangle + \langle \psi,z\rangle\geq \mu\beta +\gamma>0.
\]
We also have that
\[
\langle \psi,\mu u\rangle + \langle \psi,z\rangle>0
\]
for all $\psi\in W_1$. Thus, $x\in C$ and we are done.
\end{proof}

Given a collection $\Pi$ of open cones in $\mathbb{R}^{n+1}$, we write 
\[
\Gamma(\Pi) = \{\tau(T,z)\colon \mbox{$T\in\Pi$ and $z\in\partial T$}\}.
\]
Starting with $C$ and iterating this operation gives a collection
of open cones
\begin{equation*}
%\label{eq:4.2}
\mathcal{T}(C)=\bigcup_{k=1}^n \Gamma^k(\{C\}), 
\end{equation*}
where $\Gamma^{k+1}(\{C\})=\Gamma(\Gamma^k(\{C\})$ for all $k$. 
In particular, if $C\subseteq\mathbb{R}^{n+1}$ is an open polyhedral cone
with $N$ facets, then there exist $N$ facet defining functionals
$\psi_1,\ldots,\psi_N\in C^*$ such that 
\[
C=\{x\in\mathbb{R}^{n+1}\colon \psi_i(x)>0\mbox{ for }i=1,\ldots,N\}.
\]  
In this case it can be shown that
\[
\mathcal{T}(C)=\{C_I\colon
               \mbox{$I$ is a non-empty subset of $\{1,\ldots,N\}$}\}, 
\]
where $C_I=\{x\in\mathbb{R}^{n+1}\colon \psi_i(x)>0\mbox{ for all }i\in I\}$. 

It is instructive to determine the Busemann points that come from straight-line
geodesics in the Hilbert geometry. In fact, we will need this result later. 
\begin{lemma}\label{lem:4.1} 
If $C\subseteq\mathbb{R}^{n+1}$ is an open cone, and $\gamma(t)=(1-t)z+ty$,
with $t\in (0,1]$, is a straight-line geodesic connecting $z\in\partial C$
to $y\in C$, then 
\begin{align*}\label{eq:4.3}
\lim_{t\to 0} d_C(x,\gamma(t))-d_C(b,\gamma(t))
      & = \mathcal{RF}_C(x,z)-\mathcal{RF}_C(b,z)\\
      &\quad \quad +\mathcal{F}_{\tau(C,z)}(x,y)-\mathcal{F}_{\tau(C,z)}(b,y)
           \end{align*}
for each $x\in C$.
\end{lemma}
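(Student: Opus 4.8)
The plan is to use the decomposition $d_C = \mathcal{F}_C + \mathcal{RF}_C$ to split the quantity $d_C(x,\gamma(t)) - d_C(b,\gamma(t))$ into a reverse-Funk part and a Funk part, and to analyse the two separately as $t\to 0$. Throughout I write $S = \{\varphi\in C^*\colon \|\varphi\|=1\}$ for the compact unit sphere of the dual cone, so that by~(\ref{eq:2.2}) every quantity $M(\,\cdot\,/\,\cdot\,;C)$ is a supremum over $S$; I note that $\gamma(t)=z+t(y-z)\in C$ for $t\in(0,1]$ and that $\langle\varphi,\gamma(t)\rangle = \langle\varphi,z\rangle + t\langle\varphi,y-z\rangle$.

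The reverse-Funk part is routine. Since $x,b\in C$, the maps $\varphi\mapsto\langle\varphi,x\rangle$ and $\varphi\mapsto\langle\varphi,b\rangle$ are bounded below by a positive constant on $S$, while $\langle\varphi,\gamma(t)\rangle\to\langle\varphi,z\rangle$ uniformly on $S$ as $t\to 0$. Passing the limit through the suprema gives $M(\gamma(t)/x;C)\to M(z/x;C)$ and $M(\gamma(t)/b;C)\to M(z/b;C)$, so that $\mathcal{RF}_C(x,\gamma(t)) - \mathcal{RF}_C(b,\gamma(t)) \to \mathcal{RF}_C(x,z) - \mathcal{RF}_C(b,z)$, which is the first half of the right-hand side.

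The Funk part carries the tangent cone and is the heart of the matter. Here $\gamma(t)\to z\in\partial C$ forces the denominators $\langle\varphi,\gamma(t)\rangle$ to degenerate and $M(x/\gamma(t);C)$ to blow up like $1/t$, so the right object to study is $t\,M(x/\gamma(t);C)$. The key claim is
\[
\lim_{t\to 0} t\,M(x/\gamma(t);C) = M(x/y;\tau(C,z)),
\]
and likewise with $x$ replaced by $b$; since both limits are finite and strictly positive (using $y\in\tau(C,z)$, which holds because $\langle\varphi,y\rangle>0$ for all $\varphi\in C^*\setminus\{0\}$), dividing and taking logarithms yields $\mathcal{F}_C(x,\gamma(t)) - \mathcal{F}_C(b,\gamma(t)) \to \mathcal{F}_{\tau(C,z)}(x,y) - \mathcal{F}_{\tau(C,z)}(b,y)$, and adding the reverse-Funk limit proves the lemma. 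To prepare for the claim I would first identify, via Lemma~\ref{lem:tangent_cone_formula} and the bipolar theorem, the dual of the tangent cone as the exposed face $F_z = \{\varphi\in C^*\colon\langle\varphi,z\rangle=0\}$; then~(\ref{eq:2.2}) gives $M(x/y;\tau(C,z)) = \sup_{\varphi\in F_z\cap S}\langle\varphi,x\rangle/\langle\varphi,y\rangle$.

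It remains to prove the claim, which is where the main obstacle lies. Writing $t\,M(x/\gamma(t);C) = \sup_{\varphi\in S}\langle\varphi,x\rangle/\bigl(\langle\varphi,z\rangle/t + \langle\varphi,y\rangle - \langle\varphi,z\rangle\bigr)$, the lower bound $\liminf_{t\to0}t\,M(x/\gamma(t);C)\ge M(x/y;\tau(C,z))$ is immediate on restricting the supremum to $\varphi\in F_z\cap S$, where the expression being maximised equals the $t$-independent value $\langle\varphi,x\rangle/\langle\varphi,y\rangle$. The matching upper bound I would prove by contradiction using compactness of $S$: if, along some $t_k\to 0$, there were $\varphi_k\in S$ with quotient exceeding $M(x/y;\tau(C,z))+\delta$, I pass to a limit $\varphi_k\to\varphi_*\in S$; if $\langle\varphi_*,z\rangle>0$ then $\langle\varphi_k,z\rangle/t_k\to+\infty$ sends the quotient to $0$, a contradiction, while if $\langle\varphi_*,z\rangle=0$ then $\varphi_*\in F_z\cap S$ and, dropping the nonnegative term $\langle\varphi_k,z\rangle/t_k$ from the denominator, the quotient is at most $\langle\varphi_k,x\rangle/(\langle\varphi_k,y\rangle-\langle\varphi_k,z\rangle)\to\langle\varphi_*,x\rangle/\langle\varphi_*,y\rangle\le M(x/y;\tau(C,z))$, again a contradiction. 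The points needing genuine care are the uniform positivity of the various denominators, so that limits may be exchanged with suprema, and the identification $\tau(C,z)^*=F_z$, both of which rest on $x,b,y\in C$ and on Lemma~\ref{lem:tangent_cone_formula}.
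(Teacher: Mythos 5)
Your proof is correct, and for the half of the lemma that carries all the difficulty it takes a genuinely different route from the paper. The reverse-Funk half is the same in both: you and the paper pass the limit $\langle\phi,\gamma(t)\rangle\to\langle\phi,z\rangle$ through the supremum in~(\ref{eq:2.2}), using that $\langle\phi,x\rangle$ and $\langle\phi,b\rangle$ are bounded away from zero on the compact dual unit sphere. For the Funk half, however, the paper does not argue from scratch: it cites \cite[Lemma 3.3]{Wa1}, which gives $\mathcal{F}_C(x,\gamma(t))-\mathcal{F}_{\tau(C,z)}(x,\gamma(t))\to 0$, and then uses the exact identity $\mathcal{F}_{\tau(C,z)}(x,\gamma(t))=\log(1/t)+\mathcal{F}_{\tau(C,z)}(x,y)$ (valid because $\langle\phi,(1-t)z+ty\rangle=t\langle\phi,y\rangle$ for every $\phi$ in the face $F_z=\{\phi\in C^*:\langle\phi,z\rangle=0\}$, via Lemma~\ref{lem:tangent_cone_formula}), so that the $\log(1/t)$ terms cancel between $x$ and $b$. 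Your key claim $\lim_{t\to 0}t\,M(x/\gamma(t);C)=M(x/y;\tau(C,z))$ is precisely the conjunction of these two facts specialised to the straight-line path, and your compactness argument on the dual sphere --- splitting according to whether the limit functional $\phi_*$ satisfies $\langle\phi_*,z\rangle>0$ (denominator blows up) or $\langle\phi_*,z\rangle=0$ (so $\phi_*\in F_z=\tau(C,z)^*$) --- proves it correctly; the identification $\tau(C,z)^*=F_z$ via Lemma~\ref{lem:tangent_cone_formula} and bipolarity is exactly what the paper uses implicitly when it writes the supremum over $\{\phi\in C^*\setminus\{0\}:\langle\phi,z\rangle=0\}$. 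One small point to polish: after discarding $\langle\phi_k,z\rangle/t_k$, the remaining denominator $\langle\phi_k,y\rangle-\langle\phi_k,z\rangle$ is guaranteed positive only for $k$ large (it tends to $\langle\phi_*,y\rangle>0$ since $y\in C$), so assert that inequality only for large $k$, which is all you need. As for what each approach buys: the paper's proof is shorter given the machinery of \cite{Wa1}, and the cited lemma applies to arbitrary approaches to the boundary point, not just along straight lines; your argument is self-contained and elementary, replacing the external citation by a direct dual-cone computation, at the price of working only along the specific straight-line geodesic --- which is all this lemma requires.
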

\begin{proof}
It follows from (\ref{eq:2.2}) that  
\begin{equation}\label{eq:4.4}
\begin{split}
\lim_{t\to 0} \mathcal{RF}_C(x,\gamma (t))-\mathcal{RF}_C(b,\gamma (t))
    & = \lim_{t\to 0}\quad \log \sup_{\phi\in C^*\setminus\{0\}}
         \frac{(1-t)\langle \phi,z\rangle + t\langle \phi,y\rangle}
                {\langle \phi,x\rangle}  \\
    &  \quad \quad 	- \log\sup_{\phi\in C^*\setminus\{0\}}
         \frac{(1-t)\langle \phi,z\rangle+ t\langle \phi,y\rangle}
                {\langle \phi,b\rangle} \\
    & =\mathcal{RF}_C(x,z)-\mathcal{RF}_C(b,z)\\
\end{split}
\end{equation}
for each $x\in C$. 

By \cite[Lemma 3.3]{Wa1} we also know that  
\[
\mathcal{F}_C(x,\gamma(t))-\mathcal{F}_{\tau(C,z)}(x,\gamma(t))\to 0
\mbox{\quad as $t\to 0$}, 
\] 
for all $x\in C$. It follows from~(\ref{eq:2.2})
and Lemma~\ref{lem:tangent_cone_formula} that 
\begin{equation*}\label{eq:4.5} 
\begin{split}
\mathcal{F}_{\tau(C,z)}(x,\gamma(t))
     & = \log \sup_{\phi\in C^*\setminus\{0\}, \langle \phi,z\rangle =0}
                \frac{\langle \phi,x\rangle}{\langle \phi, (1-t)z+ty\rangle}\\
     & = \log\frac{1}{t} + \log \sup_{\phi\in C^*\setminus\{0\}, \langle \phi,z\rangle =0}
                \frac{\langle \phi,x\rangle}{\langle \phi,y\rangle}\\
     & = \log\frac{1}{t} + \mathcal{F}_{\tau(C,z)}(x,y). \\
\end{split}
\end{equation*}
 Thus, 
\begin{equation*}
\lim_{t\to 0} \mathcal{F}_C(x,\gamma(t))-\mathcal{F}_C(b,\gamma(t))
       = \mathcal{F}_{\tau(C,z)}(x,y)-\mathcal{F}_{\tau(C,z)}(b,y).
\end{equation*}
Combining this with~(\ref{eq:4.4}) completes the proof.
\end{proof}

To describe all the Busemann points, not only the tangent cone is needed,
but all the cones in $\mathcal{T}(C)\setminus\{C\}$.
According to~\cite[Lemma 4.1]{Wa1}, a sequence $(x_k)_k\subseteq C$ is an
$\epsilon$-almost geodesic  with respect to Hilbert's projective metric on $C$
if and only if
it is an $\epsilon'$-almost-geodesic under both the Funk metric and the
reverse-Funk metric on $C$. For $T\in\mathcal{T}(C)$ and $y\in T$, let
$f_{T,y}\colon T\to \mathbb{R}$ be defined by 
\[
f_{T,y}(x)=\mathcal{F}_T(x,y)-\mathcal{F}_T(b,y),
\]
where $b\in C$ is the fixed base-point. Likewise, for $z\in\overline{C}$,
we define $r_{C,z}\colon C\to \mathbb{R}$ by
\[
r_{C,z}(x)=\mathcal{RF}_C(x,z)-\mathcal{RF}_C(b,z).
\]
Following~\cite{Wa1}, we say that a sequence $(x_k)_k\subseteq C$
\emph{converges to $f\colon C\to\mathbb{R}$ in the Funk sense on $C$}
if $(f_{C,x_k})_k$ converges pointwise to $f$ on $C$.
Similarly, a sequence $(x_k)_k\subseteq C$
\emph{converges to $r\colon C\to\mathbb{R}$ in the reverse-Funk sense}
if $(r_{C,x_k})_k$ converges pointwise to $r$ on~$C$.

Much like the Busemann points in the Hilbert geometry, we can consider
Busemann points in the Funk and in the reverse-Funk geometries on $C$,
which are defined as follows. A function $f\colon C\to\mathbb{R}$ is a
\emph{Busemann point in the Funk geometry on $C$} if there exists a Funk metric
$\epsilon$-almost-geodesic $(x_k)_k\subseteq C$  which converges to $f$ in the
Funk sense and $f$ is not of the form
$\mathcal{F}_C(\cdot,p)-\mathcal{F}_C(b,p)$ for $p\in C$.
Similarly, a function $r\colon C\to\mathbb{R}$ is a
\emph{Busemann point in the reverse-Funk geometry on $C$} if there exists a
reverse-Funk metric $\epsilon$-almost-geodesic $(x_k)_k\subseteq C$
which converges to $r$ in the reverse-Funk sense and $r$ is not of the form
$\mathcal{RF}_C(\cdot,p)-\mathcal{RF}_C(b,p)$ for $p\in C$.

The following proposition, proved in \cite[Proposition 2.5]{Wa1},
describes the Busemann points of the reverse-Funk geometry. 
\begin{proposition}\label{prop:4.2}
Let $C\subseteq\mathbb{R}^{n+1}$ be a proper open cone.
The set of Busemann points of the reverse-Funk geometry on $C$ is
\[
\mathcal{B}_\mathcal{RF}=\{r_{C,x}\colon x\in\partial C\setminus\{0\}\}.
\] 
Moreover, a sequence $(x_k)_k$ in a cross-section of $C$ converges in the
reverse-Funk sense to $r_{C,x}\in \mathcal{B}_\mathcal{RF}$ if and only if
it converges to a positive multiple of $x$ in the norm topology.
\end{proposition}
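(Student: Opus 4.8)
The plan is to analyse the map $x\mapsto r_{C,x}$ defined on $\overline C\setminus\{0\}$, modulo positive scaling, into the space of functions on $C$, to show that it is a continuous injection, and then to read off both assertions of the proposition. Throughout I use the formula $\mathcal{RF}_C(w,x)=\log\sup_{\phi}\langle\phi,x\rangle/\langle\phi,w\rangle$ coming from~(\ref{eq:2.2}), where the supremum is over $\phi\in C^*$ with $\|\phi\|=1$.

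First I would prove continuity. Fix $w\in C$; on the compact set $K=\{\phi\in C^*:\|\phi\|=1\}$ the quantity $\langle\phi,w\rangle$ is bounded below by a positive constant, so if $x_k\to x$ in norm then $\phi\mapsto\langle\phi,x_k\rangle/\langle\phi,w\rangle$ converges uniformly on $K$; hence the suprema converge, giving $\mathcal{RF}_C(w,x_k)\to\mathcal{RF}_C(w,x)$ and likewise at $b$, so that $r_{C,x_k}\to r_{C,x}$ pointwise. This holds whether $x$ is interior or lies in $\partial C$, since $\langle\phi,w\rangle>0$ on all of $K$. Next comes the key algebraic step, injectivity: if $r_{C,x}=r_{C,x'}$ with $x,x'\in\overline C\setminus\{0\}$, then $M(x/w;C)=\lambda M(x'/w;C)$ for all $w\in C$ and a fixed $\lambda>0$, and after absorbing $\lambda$ into $x'$ we may assume the two are equal. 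Since $M(x/w;C)\le\mu$ is equivalent to $x\le_C\mu w$, and $\mu w$ ranges over all of $C$, this yields $(x+\overline C)\cap C=(x'+\overline C)\cap C$. As $x+C\subseteq C$ for $x\in\overline C$, taking closures gives $x+\overline C=x'+\overline C$, and properness of $C$ forces $x=x'$. Combined with continuity on the compact cross-section $\overline C\cap S^n$, this makes $x\mapsto r_{C,x}$ a homeomorphism onto its image.

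Both assertions then follow quickly. The convergence characterisation: one direction is exactly continuity (note $r_{C,x}$ is invariant under positive scaling of $x$), while if $r_{C,x_k}\to r_{C,x}$ I normalise the $x_k$ to $\overline C\cap S^n$, pass to any convergent subsequence $x_{k_j}\to x'$, and use continuity and injectivity to get $x'=x/\|x\|$; since every subsequential limit is forced to equal $x/\|x\|$, the normalised sequence converges to a positive multiple of $x$. For the set of Busemann points, if $(x_k)$ is a reverse-Funk $\epsilon$-almost-geodesic converging to a Busemann point $r$, a subsequential limit $x$ of its normalisations gives $r=r_{C,x}$ with $\|x\|=1$; were $x$ interior, $r$ would have the excluded form $\mathcal{RF}_C(\cdot,x)-\mathcal{RF}_C(b,x)$ with $x\in C$, so $x\in\partial C\setminus\{0\}$. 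Conversely, for $x\in\partial C\setminus\{0\}$ the straight-line Hilbert geodesic $\gamma(t)=(1-t)x+ty$ with $y\in C$ and $t\to 0$ converges to $r_{C,x}$ in the reverse-Funk sense by Lemma~\ref{lem:4.1} (its reverse-Funk computation~(\ref{eq:4.4})), and, being a Hilbert geodesic, is a reverse-Funk $\epsilon$-almost-geodesic by the Funk/reverse-Funk decomposition of almost-geodesics; injectivity shows $r_{C,x}$ is not of interior form, so it is a Busemann point.

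I expect the delicate point to be this last step, namely confirming that the straight-line ray really is a reverse-Funk $\epsilon$-almost-geodesic: the reverse-Funk cost is asymmetric, can take negative values, and is not invariant under rescaling of representatives, so one must work with fixed cross-section representatives and invoke the decomposition lemma with care. The injectivity step, by contrast, reduces cleanly to the uniqueness of the apex of a translated cone, and the continuity is a routine uniform-convergence argument on the compact set $K$.
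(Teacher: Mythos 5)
Your proof is correct, but there is in fact no internal proof to compare it against: the paper imports Proposition \ref{prop:4.2} verbatim from \cite[Proposition 2.5]{Wa1} and never proves it, so your argument is necessarily an independent reconstruction, and as such it stands up. The reduction of both assertions to the claim that $x\mapsto r_{C,x}$ is a continuous injection modulo positive scaling on the compact set $\overline{C}\cap S^n$ is sound: continuity follows, as you say, from uniform convergence of $\phi\mapsto\langle\phi,x_k\rangle/\langle\phi,w\rangle$ on the compact normalised dual slice (using homogeneity in $\phi$ to replace $C^*\setminus\{0\}$ by that slice, and properness of $C$ to ensure $M(x/w;C)\in(0,\infty)$ for $x\in\overline{C}\setminus\{0\}$), and your injectivity step --- recovering $(x+\overline{C})\cap C$ from $w\mapsto M(x/w;C)$ via the attained infimum $M(x/v;C)\le 1 \Leftrightarrow x\le_C v$, then using $\overline{C}+C\subseteq C$ to take closures and properness to identify the apex --- is exactly the right mechanism, and it also supplies, as you note, the exclusion of the interior form needed on both sides of the Busemann-point identification. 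On the point you flag as delicate: your detour through the Hilbert metric and the Funk/reverse-Funk decomposition of almost-geodesics (\cite[Lemma 4.1]{Wa1}, quoted in Section \ref{sec:4}) is valid, since the points along the straight-line ray, taken in order, form a $0$-almost-geodesic sequence for the Hilbert metric; but it is avoidable, because straight-line segments are geodesics for the reverse-Funk metric itself, a fact this paper uses without ceremony in the proof of Proposition \ref{prop:4.5}, and the reverse-Funk limit along the ray is precisely computation (\ref{eq:4.4}). The only gloss worth repairing is in the converse half of the ``Moreover'': you establish convergence of the normalisations $x_k/\|x_k\|$, whereas the statement concerns the cross-section representatives; one should add the one-line remark that $u\mapsto u/\langle\psi,u\rangle$ is continuous on $\overline{C}\cap S^n$ for $\psi$ in the interior of $C^*$ (nonempty by properness, as the paper notes in the proof of Proposition \ref{prop:4.5}), which converts one convergence into the other. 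This is routine and does not affect correctness.
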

The Busemann points of the Funk geometry are more complicated as the following
result~\cite[Proposition 3.11]{Wa1} shows.
\begin{proposition}\label{prop:4.3} 
If $C\subseteq\mathbb{R}^{n+1}$ is a proper open cone, then the set of Busemann
points of the Funk geometry on $C$ is
\[
\mathcal{B}_\mathcal{F}
   = \{f_{T,p\mid C} \colon
          \mbox{$T\in\mathcal{T}(C)\setminus\{C\}$ and $p\in T$}\}.
\]
\end{proposition}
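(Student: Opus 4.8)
My plan is to prove the two inclusions separately by induction on the dimension $n$ of the Hilbert geometry on $C$. The engine of the induction is the localisation phenomenon already used in Lemma~\ref{lem:4.1}: near a boundary point $z\in\partial C$ the Funk metric $\mathcal{F}_C$ is asymptotic to the Funk metric of the tangent cone, in the sense that $\mathcal{F}_C(x,y)-\mathcal{F}_{\tau(C,z)}(x,y)\to 0$ as $y\to z$ by \cite[Lemma 3.3]{Wa1}, while Lemma~\ref{lem:tangent_cone_formula} identifies $\tau(C,z)^\ast$ with the exposed face $\{\phi\in C^\ast:\langle\phi,z\rangle=0\}\subseteq C^\ast$; in particular $\mathcal{F}_{\tau(C,z)}\le\mathcal{F}_C$. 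Quotienting by the lineality space $[0]_{\tau(C,z)}$ makes $\tau(C,z)$ a proper cone whose Hilbert geometry has dimension strictly below $n$, and one checks that $\{\tau(C,z)\}\cup\mathcal{T}(\tau(C,z))$ is exactly the family of cones in $\mathcal{T}(C)\setminus\{C\}$ subordinate to $z$. Since $\mathcal{T}(C)=\bigcup_{k=1}^n\Gamma^k(\{C\})$ is built from at most $n$ iterations, the recursion terminates.

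For the inclusion $\supseteq$ I would first treat the one-step case $T=\tau(C,z)$. Given $p\in T$, the straight ray $\gamma(s)=z+sp$ lies in $C$ for small $s>0$ and is a Funk geodesic, hence an $\epsilon$-almost-geodesic. Using Lemma~\ref{lem:tangent_cone_formula} one computes $\mathcal{F}_{\tau(C,z)}(x,\gamma(s))=\log(1/s)+\mathcal{F}_{\tau(C,z)}(x,p)$, and combining this with the asymptotic identity above shows that $f_{C,\gamma(s)}$ converges in the Funk sense to $f_{T,p\mid C}$ as $s\to 0$, the $\log(1/s)$ terms cancelling in the difference defining $f_{C,\gamma(s)}$. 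For a deeper cone $T\in\mathcal{T}(\tau(C,z))$, the induction hypothesis supplies a Funk $\epsilon$-almost-geodesic $(w_k)_k$ in $\tau(C,z)$ converging to $f_{T,p}$, which I would transfer to $C$ by a diagonal construction, setting $x_k=z+s_k w_k$ with $s_k\to 0$ chosen fast enough that $(x_k)_k$ is an $\epsilon'$-almost-geodesic in $C$ converging in the Funk sense to $f_{T,p\mid C}$. It remains to note that none of these limits is an interior function $\mathcal{F}_C(\cdot,q)-\mathcal{F}_C(b,q)$, which holds because the approximating points leave every compact subset of $C$.

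For the inclusion $\subseteq$, let $(x_k)_k$ be a Funk $\epsilon$-almost-geodesic in a cross-section converging in the Funk sense to a function $f$ that is not an interior function. Passing to a subsequence, $x_k\to z\in\overline C$; were $z$ interior, continuity of $\mathcal{F}_C$ would give $f=f_{C,z}$, an interior function, so in fact $z\in\partial C$. By \cite[Lemma 3.3]{Wa1} the difference $f_{C,x_k}-f_{\tau(C,z),x_k}$ tends to $0$ pointwise, whence $f$ is also the Funk limit of $(x_k)_k$ computed in $\tau(C,z)$. Because $\mathcal{F}_{\tau(C,z)}\le\mathcal{F}_C$ with defect tending to $0$ at the endpoints of the geodesic segments, the inequality defining an $\epsilon$-almost-geodesic survives the passage to $\tau(C,z)$, so $(x_k)_k$ is an $\epsilon'$-almost-geodesic there as well. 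Quotienting by $[0]_{\tau(C,z)}$ lowers the dimension, and the induction hypothesis (together with the observation that an interior Funk function of $\tau(C,z)$ has the form $f_{\tau(C,z),p}$, corresponding to $T=\tau(C,z)$) identifies $f$ with $f_{T,p\mid C}$ for some $T\in\{\tau(C,z)\}\cup\mathcal{T}(\tau(C,z))\subseteq\mathcal{T}(C)\setminus\{C\}$.

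The main obstacle is the transfer of the $\epsilon$-almost-geodesic property across the tangent-cone approximation in both directions. In the forward direction this is the quantitative choice of the scales $s_k$ in the diagonal argument, ensuring the concatenation of the straight approach to $z$ with the almost-geodesic $(w_k)_k$ genuinely satisfies the almost-geodesic inequality in $C$. In the reverse direction it is controlling the defect $\mathcal{F}_C-\mathcal{F}_{\tau(C,z)}$ uniformly enough along $(x_k)_k$ to preserve that inequality after localising, and doing the bookkeeping for the lineality space so that the dimension strictly decreases and the induction closes.
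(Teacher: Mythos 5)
First, a point of comparison: the paper does not prove this proposition at all --- it is imported verbatim from \cite[Proposition~3.11]{Wa1} --- so your attempt must be measured against Walsh's proof there, whose machinery is visible in this paper in the proof of Proposition~\ref{prop:4.6}: one works with the convex functions $j_{C,x}$, epigraph convergence, and the Legendre--Fenchel transform, not with a naive induction through tangent cones. Your reverse inclusion contains the genuine gap. It hinges on the claim that $f_{C,x_k}-f_{\tau(C,z),x_k}\to 0$ pointwise merely because $x_k\to z$ in norm, which you attribute to \cite[Lemma~3.3]{Wa1}. That lemma concerns approach to $z$ along a straight-line segment $(1-t)z+ty$, and it is used in exactly that restricted form here in Lemma~\ref{lem:4.1} and Proposition~\ref{prop:4.6}. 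For a general proper open cone the defect along an arbitrary sequence converging to $z$ need not vanish: in the Lorentz cone, for instance, a sequence approaching a boundary ray while hugging the boundary is dominated by functionals vanishing at nearby boundary points rather than at $z$, and its Funk limit is not subordinate to $\tau(C,z)$ --- such sequences are precisely the source of non-Busemann horofunctions. The almost-geodesic hypothesis is what excludes this behaviour, and extracting the localisation from it is the actual content of \cite[Proposition~3.11]{Wa1}: there one shows that along a Funk almost-geodesic the transforms $j^*_{C,x_k}$ epi-converge and that the limit is forced to be of the form $j^*_{T,p}$, i.e.\ the indicator of a set $\{\psi\in T^*\colon M(b/p;T)\langle\psi,p\rangle\le 1\}$. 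Your proposal invokes the almost-geodesic property only \emph{after} the localisation, to transfer the defining inequality to $\tau(C,z)$; the core difficulty is thus assumed rather than proved. (In the polyhedral case the pointwise defect does vanish for arbitrary approach, since $\mathcal{F}_C$ is a maximum over finitely many facet functionals, but the proposition is asserted for all proper open cones, so the induction as written does not close.)

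Two secondary gaps. First, in the forward inclusion, verifying that the diagonal sequence $x_k=z+s_kw_k$ is a genuine $\epsilon'$-almost-geodesic in $C$ is exactly the chain construction carried out in the second half of the proof of Proposition~\ref{prop:4.6} (the cones $S_1=C,\dots,S_N$, the points $y_k=(1-\lambda_k)x+\lambda_k x_k$, with \cite[Lemmas~3.1 and~3.3]{Wa1} managing the $\log\lambda_k$ bookkeeping); you correctly identify this as the main obstacle but leave it unresolved, and the part you do carry out (the cancelling $\log(1/s)$ terms, giving Funk-sense convergence) is the easy half. Second, your one-line argument that the limits are not interior functions --- ``the approximating points leave every compact subset'' --- does not suffice: a priori a pointwise limit on $C$ of $f_{C,x_k}$ with $x_k$ unbounded could still equal some $f_{C,q}$. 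In \cite{Wa1} this is settled by upgrading Funk-sense convergence to epigraph convergence of the globally defined convex functions (\cite[Lemma~4.15]{Wa1}, quoted in the proof of Proposition~\ref{prop:4.6}), after which $j^*_{T,p}$ and $j^*_{C,q}$ are distinguished by their effective domains, since $T^*\neq C^*$ for $T\neq C$.
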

Each Busemann point in the Hilbert geometry is the sum of a Busemann point
in the Funk geometry and a Busemann point in the reverse-Funk geometry.
Indeed, the following characterisation was obtained in~\cite[Section 4]{Wa1}. 
\begin{theorem}\label{thm:4.4} 
If $C\subseteq\mathbb{R}^{n+1}$ is a proper open cone, then the set of Busemann
points of the Hilbert geometry on $C$ is
\[
\mathcal{B}=\{r_{C,x}+f_{T,p\mid C} \colon
         \mbox{$x\in\partial C\setminus\{0\}$, $T\in\mathcal{T}(\tau(C,x))$,
                 and $p\in T$}\}.
\]
Moreover, for each $r_{C,x}+f_{T,p\mid C}\in \mathcal{B}$ there exists an
almost-geodesic that converges in the norm topology to $x$
and in the Funk sense to $f_{T,p}$.
\end{theorem}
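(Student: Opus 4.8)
The plan is to exploit the additive splitting $d_C=\mathcal{F}_C+\mathcal{RF}_C$ together with the fact, recorded in \cite[Lemma 4.1]{Wa1}, that a sequence is an $\epsilon$-almost-geodesic for $d_C$ if and only if it is an $\epsilon'$-almost-geodesic simultaneously for the Funk and the reverse-Funk metrics. Given a Busemann point $\xi$ of the Hilbert geometry, I would first invoke \cite[Proposition 7.12]{AGW} to realise it as the limit of an $\epsilon$-almost-geodesic $(x_k)_k$, which we may take to lie in a fixed cross-section of $C$. By the cited splitting, $(x_k)_k$ is then an $\epsilon'$-almost-geodesic for both $\mathcal{F}_C$ and $\mathcal{RF}_C$; passing to a subsequence, I would assume it converges in the reverse-Funk sense to some $r$, in the Funk sense to some $f$, and in norm to some point of $\overline{C}$, so that $\xi=r+f$.

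For the reverse-Funk component, since $\xi$ is a horofunction the sequence is unbounded in $d_C$, and as it converges in norm it cannot approach an interior point (on the interior the Hilbert and norm topologies agree, so a norm-convergent sequence would be $d_C$-bounded); hence the norm limit is a positive multiple of some $x\in\partial C\setminus\{0\}$ and, by Proposition \ref{prop:4.2}, $r=r_{C,x}$. The heart of the argument is the Funk component, where I would show that the Funk geometry of $C$, as seen by a sequence converging in norm to $x$, is asymptotically the Funk geometry of the tangent cone $\tau(C,x)$. Concretely, using \cite[Lemma 3.3]{Wa1} — which gives $\mathcal{F}_C(\cdot,\gamma)-\mathcal{F}_{\tau(C,x)}(\cdot,\gamma)\to 0$ along any sequence $\gamma$ tending to $x$, exactly as exploited in the proof of Lemma \ref{lem:4.1} — I would argue that $(x_k)_k$ is also an $\epsilon''$-almost-geodesic for the Funk metric of $\tau(C,x)$ and converges in its Funk sense to the same $f$. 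Proposition \ref{prop:4.3} applied to $\tau(C,x)$, augmented by the interior Funk horofunctions $f_{\tau(C,x),p}$ with $p\in\tau(C,x)$ (which correspond to the case $T=\tau(C,x)$, legitimate because $\tau(C,x)=\tau(\tau(C,x),0)\in\mathcal{T}(\tau(C,x))$), then forces $f=f_{T,p\mid C}$ with $T\in\mathcal{T}(\tau(C,x))$ and $p\in T$. This yields the inclusion of $\mathcal{B}$ in the asserted set.

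For the reverse inclusion and the final ``moreover'' assertion, I would reverse the construction. Given $x\in\partial C\setminus\{0\}$, a cone $T\in\mathcal{T}(\tau(C,x))$ and $p\in T$, I would first select a Funk-almost-geodesic in $\tau(C,x)$ converging in the Funk sense to $f_{T,p}$ — provided by Proposition \ref{prop:4.3} when $T\neq\tau(C,x)$, and by a straight-line ray as in Lemma \ref{lem:4.1} when $T=\tau(C,x)$ — and then transport it into $C$ as a sequence of the form $x+\delta_k w_k$, where $(w_k)_k$ is the chosen almost-geodesic in $\tau(C,x)$ and $\delta_k\downarrow 0$ is chosen small enough to keep the points in $C$ and to force norm convergence to $x$. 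Using \cite[Lemma 3.3]{Wa1} again, this sequence is a Funk $\epsilon''$-almost-geodesic in $C$ with Funk limit $f_{T,p\mid C}$; since it converges in norm to $x$, Proposition \ref{prop:4.2} shows it is also a reverse-Funk almost-geodesic converging to $r_{C,x}$. By \cite[Lemma 4.1]{Wa1} it is therefore a Hilbert $\epsilon$-almost-geodesic, and its horofunction limit is $r_{C,x}+f_{T,p\mid C}$, as required.

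The step I expect to be the main obstacle is the compatibility condition $T\in\mathcal{T}(\tau(C,x))$: one must control $\mathcal{F}_C$ against $\mathcal{F}_{\tau(C,x)}$ uniformly enough along the sequence to transfer both the $\epsilon$-almost-geodesic property and the Funk-sense convergence between the two cones, and in the converse direction one must choose the scaling $\delta_k$ so that a single sequence realises the prescribed Funk and reverse-Funk limits simultaneously. Everything else — the splitting, the identification of the two components, and the compactness used to extract convergent subsequences — is comparatively routine given the results quoted above.
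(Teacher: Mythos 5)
First, a point of comparison: the paper itself does not prove Theorem~\ref{thm:4.4}; it is imported verbatim from \cite[Section 4]{Wa1}, and the ``moreover'' clause is later re-invoked in the proof of Theorem~\ref{thm:4.7} by citing \cite[Lemma~4.3]{Wa1} and the formulae in the proof of \cite[Theorem~1.1]{Wa1}. Your skeleton does reproduce the architecture of Walsh's argument (splitting via \cite[Lemma 4.1]{Wa1}, Propositions~\ref{prop:4.2} and~\ref{prop:4.3} for the two components, compatibility expressed through tangent cones at the norm limit). The genuine gap, however, sits exactly at the step you flag as ``the main obstacle'' and then paper over: you invoke \cite[Lemma 3.3]{Wa1} as giving $\mathcal{F}_C(\cdot,y_k)-\mathcal{F}_{\tau(C,x)}(\cdot,y_k)\to 0$ ``along any sequence tending to $x$''. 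That lemma, as used both in Lemma~\ref{lem:4.1} and in the proof of Proposition~\ref{prop:4.6} here, concerns approach along a straight-line segment $(1-t)x+ty$, and the blanket statement is false for general proper cones. For instance, in the Lorentz cone $C=\{(t,u)\in\mathbb{R}\times\mathbb{R}^2 : t>|u|\}$ with $x=(1,\omega_0)$, the tangential approach $y_\epsilon=(1,(1-\epsilon^2)\omega_\epsilon)$ gives $\liminf_{\epsilon\to 0}\bigl(\mathcal{F}_C(w,y_\epsilon)-\mathcal{F}_{\tau(C,x)}(w,y_\epsilon)\bigr)\geq\log\tfrac{3}{2}$, because dual functionals that nearly vanish at $x$ dominate the supremum defining $\mathcal{F}_C$ but are absent from $\tau(C,x)^*$. (The general-sequence version does hold for polyhedral cones, where the supremum runs over finitely many facet functionals, but Theorem~\ref{thm:4.4} is stated for arbitrary proper open cones.) So your necessity direction---that the given Hilbert almost-geodesic is also a Funk almost-geodesic of $\tau(C,x)$ with the same Funk limit---is precisely the hard content of the theorem, and the cited lemma does not deliver it.

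The converse direction has two further concrete defects. First, you misquote Proposition~\ref{prop:4.2}: it says that a sequence converging in norm to $x$ converges to $r_{C,x}$ \emph{in the reverse-Funk sense}; it does not say the sequence is a reverse-Funk almost-geodesic, and that property is a real constraint (already in the positive quadrant, $x_k=(1,a_k)\to(1,0)$ satisfies it essentially only when $a_k$ is eventually monotone). Without it, \cite[Lemma 4.1]{Wa1} cannot be applied to conclude that $x+\delta_k w_k$ is a Hilbert almost-geodesic, so the scalings $\delta_k$ must be coordinated with $(w_k)$ to secure both the reverse-Funk and the Funk almost-geodesic defects simultaneously. Second, when $T$ is a tangent cone of depth greater than one, the single perturbation $x+\delta_k w_k$ does not obviously yield Funk-sense convergence in $C$ to $f_{T,p\mid C}$: the construction that actually works---reproduced in this paper's proof of Proposition~\ref{prop:4.6}---iterates through a chain of cones $C=S_1,\dots,S_N$ with $S_{k}=\tau(S_{k-1},\cdot)$, bending along straight segments at each stage with $\lambda_k$ chosen by a diagonal argument against a dense set of test points, which is precisely where the straight-line form of \cite[Lemma 3.3]{Wa1} suffices. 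Your sketch could plausibly be repaired by such an induction on the chain, but as written the one-step shortcut, together with the two miscited lemmas, leaves both inclusions unproved at their crux. A minor additional mismatch: you apply Proposition~\ref{prop:4.3} to $\tau(C,x)$, which is never a proper cone (since $x\in[0]_{\tau(C,x)}$), so one needs the Funk-geometry results in the non-proper setting or must first quotient by the lineality space.
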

Thus, if $(x_k)$ is an almost-geodesic converging to
$g=r_{C,x}+f_{S,p\mid C}\in \mathcal{B}$, and 
$h=r_{C,y}+f_{T,q\mid C}\in\mathcal{B}$, then, by Lemma~\ref{lem:3.2},
\begin{equation*}
\begin{split}
H(g,h) & = \lim_{k\to\infty} d_C(b,x_k)+h(x_k)\\
		& = \lim_{k\to\infty}
                     \Big{(}\mathcal{RF}_C(b,x_k)+r_{C,y}(x_k)\Big{)}
		   +\
                    \Big{(}\mathcal{F}_C(b,x_k)+f_{T,q}(x_k)\Big{)}.
\end{split}
\end{equation*} 
We will consider the two parenthesised  expressions  separately. 
Recall that for $x\in\overline{C}$ the \emph{face of} $x$ is defined as the set containing  those points $y\in\overline{C}$ such that the straight-line through $x$ and $y$ contains a open line segment $I$ with $x\in I$ and $I\subseteq \overline{C}$.  
\begin{proposition}\label{prop:4.5} 
Let $C\subseteq\mathbb{R}^{n+1}$ be a proper open cone and
$x,y\in\partial C\setminus\{0\}$.
Let $(x_k)_k$ be an almost-geodesic with respect to the reverse-Funk metric
converging to $x$ in the norm topology.
If $y$ lies in the face $F$ of~$x$, then 
\begin{equation}\label{eq:4.9} 
\lim_{k\to\infty} \mathcal{RF}_C(b,x_k)+r_{C,y}(x_k)
= \mathcal{RF}_C(b,x)+\mathcal{RF}_F(x,y)-\mathcal{RF}_C(b,y).
\end{equation} 
The limit is $\infty$ otherwise.
\end{proposition}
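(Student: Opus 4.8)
The plan is to expand the left-hand side of~(\ref{eq:4.9}) as
\[
\mathcal{RF}_C(b,x_k)+r_{C,y}(x_k)=\mathcal{RF}_C(b,x_k)+\mathcal{RF}_C(x_k,y)-\mathcal{RF}_C(b,y),
\]
and to treat the summands separately. The outer two are essentially free: since $b$ lies in the interior of $C$, the quantity $\langle\phi,b\rangle$ is bounded below by a positive constant on $\{\phi\in C^*:\|\phi\|=1\}$, so by~(\ref{eq:2.2}) the map $v\mapsto M(v/b;C)$ is continuous, and $x_k\to x$ in norm yields $\mathcal{RF}_C(b,x_k)\to\mathcal{RF}_C(b,x)$, while $\mathcal{RF}_C(b,y)$ is constant. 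Everything therefore reduces to showing that $\mathcal{RF}_C(x_k,y)\to\mathcal{RF}_F(x,y)$ when $y\in F$, and $\mathcal{RF}_C(x_k,y)\to\infty$ otherwise. I may normalise so that $x_k\to x$ genuinely in norm, since the displayed sum is invariant under rescaling each $x_k$.

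For the lower bound I would use~(\ref{eq:2.2}). For fixed $\phi\in C^*$ with $\langle\phi,x\rangle>0$ we have $\mathcal{RF}_C(x_k,y)\ge\log\bigl(\langle\phi,y\rangle/\langle\phi,x_k\rangle\bigr)$, and letting $k\to\infty$ gives $\liminf_k\mathcal{RF}_C(x_k,y)\ge\log\bigl(\langle\phi,y\rangle/\langle\phi,x\rangle\bigr)$; taking the supremum over such $\phi$ bounds the liminf below by $\log\sup_{\phi\in C^*,\,\langle\phi,x\rangle>0}\langle\phi,y\rangle/\langle\phi,x\rangle$. The case $y\notin F$ also falls out here. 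By the dual description of the face (a point $y\in\overline C$ lies in $F$ if and only if $\langle\phi,y\rangle=0$ for every $\phi\in C^*$ with $\langle\phi,x\rangle=0$, which is the reverse-Funk counterpart of Lemma~\ref{lem:tangent_cone_formula}), if $y\notin F$ there is $\phi_0\in C^*\setminus\{0\}$ with $\langle\phi_0,x\rangle=0$ and $\langle\phi_0,y\rangle>0$; as $\langle\phi_0,x_k\rangle\to0$ through positive values, $\mathcal{RF}_C(x_k,y)\to\infty$, and hence so does the whole expression.

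When $y\in F$ the supremum above equals $M(y/x;F)$, so the liminf is at least $\mathcal{RF}_F(x,y)$. One inequality, $\sup_{\phi}\langle\phi,y\rangle/\langle\phi,x\rangle\le M(y/x;F)$, is immediate from $\lambda x-y\in F\subseteq\overline C$ with $\lambda=M(y/x;F)$; the reverse is a separation argument in the linear span of $F$, using that the dual cone of $F$ is the closure of $C^*+\mathrm{span}\{\phi\in C^*:\langle\phi,x\rangle=0\}$, together with $y\in F$ and $x$ in the relative interior of $F$. The same facts give the identity $\mathcal{RF}_C(x,y)=\mathcal{RF}_F(x,y)$ for $y\in F$, since for such $y$ any $\lambda$ with $\lambda x-y\in\overline C$ in fact has $\lambda x-y\in F$.

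It remains to bound the limsup from above, and this is where the almost-geodesic hypothesis is indispensable: for an arbitrary sequence $x_k\to x$ the quantity $\mathcal{RF}_C(x_k,y)$ need not converge to $\mathcal{RF}_F(x,y)$, since a tangential approach to $x$ can make it blow up. The key estimate is $\mathcal{RF}_C(x_k,x)\to0$. To obtain it I would apply the reverse-Funk form of Lemma~\ref{lem:3.2}, whose proof uses only the triangle inequality satisfied by $\mathcal{RF}_C$: since $(x_k)$ is a reverse-Funk almost-geodesic converging in norm to $x$, it converges in the reverse-Funk sense to $r_{C,x}$ by Proposition~\ref{prop:4.2}, so $\lim_k\mathcal{RF}_C(b,x_k)+r_{C,x}(x_k)=0$; expanding $r_{C,x}(x_k)=\mathcal{RF}_C(x_k,x)-\mathcal{RF}_C(b,x)$ and cancelling $\mathcal{RF}_C(b,x_k)\to\mathcal{RF}_C(b,x)$ leaves $\mathcal{RF}_C(x_k,x)\to0$. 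The triangle inequality then gives $\mathcal{RF}_C(x_k,y)\le\mathcal{RF}_C(x_k,x)+\mathcal{RF}_C(x,y)=\mathcal{RF}_C(x_k,x)+\mathcal{RF}_F(x,y)$, so $\limsup_k\mathcal{RF}_C(x_k,y)\le\mathcal{RF}_F(x,y)$. Combined with the lower bound this shows the limit exists and equals $\mathcal{RF}_F(x,y)$, and substituting back proves~(\ref{eq:4.9}). The main obstacle is precisely this limsup bound, namely extracting $\mathcal{RF}_C(x_k,x)\to0$ from the almost-geodesic condition; the convex-geometric identities for the face $F$ are routine but must be set up with care.
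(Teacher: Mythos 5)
Your strategy is sound and genuinely different from the paper's in its main computation, though it shares the same entry point. The paper also starts from Proposition~\ref{prop:4.2} and the observation that the argument of Lemma~\ref{lem:3.2} transfers to the reverse-Funk setting, but it uses this transfer to show that the limit is \emph{independent of the almost-geodesic}, and then evaluates it on the explicit straight-line geodesic $z_k=\frac{1}{k}b+(1-\frac{1}{k})x$, handling the three cases ($y$ a positive multiple of $x$; $y\in F$ but not a multiple of $x$; $y\notin F$) by elementary limits of ratios of boundary intersection points. You instead keep the arbitrary almost-geodesic and squeeze $\mathcal{RF}_C(x_k,y)$ directly: the liminf bound from~(\ref{eq:2.2}) by testing against fixed functionals, and the limsup bound from $M(y/x_k;C)\le M(x/x_k;C)\,M(y/x;C)$ together with $\mathcal{RF}_C(x_k,x)\to 0$, which you extract from the first part of Lemma~\ref{lem:3.2} --- the same reverse-Funk transfer the paper itself performs, so that step is on equal footing with the published argument. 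Your route buys a cleaner statement: the liminf inequality holds for \emph{every} sequence converging to $x$ in norm, the almost-geodesic hypothesis entering only through the limsup. Two of your auxiliary claims can also be lightened: for $y\in F$ the identity $M(y/x;C)=M(y/x;F)$ needs no separation argument or description of the dual cone of $F$, since if $\lambda x-y\in\overline{C}$ then $\lambda x=y+(\lambda x-y)$ writes a face point as a sum of two elements of $\overline{C}$, which forces both summands into $F\cup\{0\}\subseteq\overline{F}$ (if $\lambda x = u+v$ with $u,v\in\overline{C}$, then $\lambda x+t(\lambda x-u)=u+(1+t)v\in\overline{C}$ for all $t>0$, so $u$ lies in the face of $x$ when $u\neq 0$); and, given that every $\phi\in C^*$ vanishing at $x$ vanishes at $y$ (the valid direction of your duality claim), Hahn--Banach gives $M(y/x;C)=\sup\{\langle\phi,y\rangle/\langle\phi,x\rangle\colon \phi\in C^*,\ \langle\phi,x\rangle>0\}$, which is exactly what your liminf bound requires.

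There is, however, one genuine gap: the ``dual description of the face'' you invoke --- that $y\in\overline{C}$ lies in $F$ if and only if $\langle\phi,y\rangle=0$ for every $\phi\in C^*$ with $\langle\phi,x\rangle=0$ --- is false for general proper cones; only the forward implication holds. The reverse implication characterises the smallest \emph{exposed} face containing $x$, which may strictly contain $F$. For example, let $C$ be the cone over the convex hull of a closed disc and a point $e$ outside it, and let $x$ span the ray over a point $T$ where a tangent segment from $e$ meets the circle: the face of $x$ is the ray through $x$ itself, yet the only supporting functional at $x$ is the one defined by the tangent line, which also vanishes on the ray over $e$; so $y$ over $e$ passes your dual test while $y\notin F$, and the functional $\phi_0$ on which your $y\notin F$ case rests does not exist. (For polyhedral cones all faces are exposed and your claim is true, but Proposition~\ref{prop:4.5} is stated for arbitrary proper open cones.) The repair stays entirely within your scheme: by the paper's definition of the face, $y\notin F$ is equivalent to $M(y/x;C)=\infty$, and by Hahn--Banach this forces either the existence of some $\phi_0\in C^*$ with $\langle\phi_0,x\rangle=0<\langle\phi_0,y\rangle$ (your case), or $\sup\{\langle\phi,y\rangle/\langle\phi,x\rangle\colon\phi\in C^*,\ \langle\phi,x\rangle>0\}=\infty$; in either alternative your fixed-functional estimate already yields $\liminf_k\mathcal{RF}_C(x_k,y)=\infty$. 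With that one-line patch, the proof is complete.
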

\begin{proof}
Each almost-geodesic (under the reverse-Funk metric) in $C$ that converges in the 
norm topology to $x$ converges to $r_{C,x}$ in the reverse-Funk sense by~\cite[Proposition 2.5]{Wa1}.
Therefore, we can argue just as in the proof of Lemma \ref{lem:3.2},
replacing the metric $d$ by $\mathcal{RF}_C(\cdot,\cdot)$, to conclude that 
\[
\lim_{k\to\infty} \mathcal{RF}_C(b,x_k)+r_{C,y}(x_k)
\] 
is independent of the (reverse-Funk metric) almost-geodesic $(x_k)_k$ converging to $x$ in norm.

Let us consider $(z_k)_k$ with $z_k = \frac{1}{k}b +(1-\frac{1}{k})x$
for all $k\geq 1$. 
As every straight-line segment is a geodesic under the reverse-Funk metric,
$(z_k)_k$ is an almost-geodesic. 
Note that as $C$ is a proper cone, $C^*$ has non-empty interior. Therefore there exists $\psi\in C^*$ such that $\langle \psi,y\rangle =1$ and
$\langle \psi,z\rangle >0$ for all $z\in C\setminus\{0\}$.

Define 
\[
u=\frac{x}{\langle \psi,x\rangle}
\mbox{\quad and\quad } 
u_k=\frac{z_k}{\langle \psi,z_k\rangle}
\quad\mbox{for each $k\geq 1$}.
\]

Recall that
$\mathcal{RF}_C(\alpha v,\beta w)=\log(\beta/\alpha)+\mathcal{RF}_C(v,w)$
for all $\alpha,\beta>0$. Therefore 
\begin{eqnarray*}
\lim_{k\to\infty} r_{C,y}(z_k)
    & = & \lim_{k\to\infty}
             -\log \langle\psi,z_k\rangle +  \mathcal{RF}_C(u_k,y)
                      -\mathcal{RF}_C(b,y)\\
    & = & \lim_{k\to\infty}
             -\log \langle\psi,z_k\rangle +  \log \frac{|w_ky|}{|w_k u_k|}
                      -\mathcal{RF}_C(b,y),
\end{eqnarray*}
where $w_k$ is the point in the intersection of the straight line through $u_k$ and $y$ with $\partial C$ on the same side of $y$ as $u_k$.

Suppose that $y=\lambda x$ for some $\lambda>0$. So, $u=y$ and each $u_k$
lies on the straight-line segment connecting
$b'=b/\langle \psi,b\rangle$ and $y$. In this case, obviously, 
\[
\frac{|w_ky|}{|w_ku_k|}\to 1
\]
as $k$ tends to infinity.
Moreover, $\mathcal{RF}_C(b,x_k)$ converges to $\mathcal{RF}_C(b,x)$ and
$-\log \langle \psi,z_k\rangle$ converges to
$-\log \langle \psi,x\rangle =\log \lambda$ as $k$ tends to infinity. 
Since $\mathcal{RF}_C(b,x)=\mathcal{RF}_C(b,y)-\log \lambda$,
equality~(\ref{eq:4.9}) holds in this case. 

Now suppose that $y\in F$ and $y\neq \lambda x$ for all $\lambda>0$.
So, $y\neq u$ and $y$ in the face of $u$, since $u$ has the same face as $x$.
Therefore we can define $w$ to be the point in the intersection of
$\partial C$ with the straight line through $y$ and $u$ that is on the same
side of $y$ as $u$, and farthest away from $y$.
Since $y$ is in the face of $u$, we know that $w\neq u$. Thus, 
\begin{eqnarray*}
\lim_{k\to\infty} r_{C,y}(z_k)
    & = & \lim_{k\to\infty}
             -\log \langle\psi,z_k\rangle +  \mathcal{RF}_C(u_k,y)
                      -\mathcal{RF}_C(b,y)\\
    & = & -\log \langle\psi,x\rangle +  \log \frac{|wy|}{|wu|}-\mathcal{RF}_C(b,y)\\
    & = & \mathcal{RF}_F(x,y) -\mathcal{RF}_C(b,y).
\end{eqnarray*}
As $\mathcal{RF}_C(b,z_k)$ converges to $\mathcal{RF}_C(b,x)$ as $k$ tends to
infinity, equality~(\ref{eq:4.9}) holds. 

Finally, suppose $y$ is not in the face of $x$. So, $w=u$ and 
\[
\frac{|w_ky|}{|w_ku_k|}\to\infty\mbox{\quad as }k\to\infty.
\]
This completes the proof.
\end{proof}

Given an open cone $C\subseteq \mathbb{R}^{n+1}$ and a base-point $b\in C$,
we define for $x\in C$ a function $j_{C,x}\colon\mathbb{R}^{n+1}\to \mathbb{R}$
by 
\[
j_{C,x}(y)=\frac{M(y/x;C)}{M(b/x;C)}\mbox{\quad for }y\in\mathbb{R}^{n+1}.
\]
It follows from (\ref{eq:2.2}) that $j_{C,x}$ is convex.
Also note that $f_{C,x}(y) = \log j_{C,x}(y)$ for all $x,y\in C$.

We recall several concepts from convex analysis;
the reader may consult~\cite{Beer} for details.
The \emph{epi-graph} of a convex function
$f\colon\mathbb{R}^{n+1}\to\mathbb{R}$ is given by 
\[
\mathrm{epi}(f)
   =\{(x,\alpha)\in\mathbb{R}^{n+1}\times\mathbb{R} \colon f(x)\leq\alpha\}.
\]
The epi-graph is a convex set and can be used to define a topology on the
space $\Lambda(\mathbb{R}^{n+1})$ of proper, lower semi-continuous,
convex functions on $\mathbb{R}^{n+1}$ as follows.
A sequence $(f_k)_k$ in $\Lambda(\mathbb{R}^{n+1})$ is said to converge in the
\emph{epi-graph topology} to $f$ if the epi-graphs $\mathrm{epi}(f_k)$ converge
to $\mathrm{epi}(f)$ in the Painlev\'e-Kuratowski topology.
Here a sequence of closed sets $(A_k)_k$ in $\mathbb{R}^{n+1}\times \mathbb{R}$
converges to  $A$ in the \emph{Painlev\'e-Kuratowski topology} if
\[
\mathrm{Ls} A_k
   :=\bigcap_{k\geq 0}\Big{(}\overline{\bigcup_{i\geq k} A_i}\Big{)} 
\]
and 
\[
\mathrm{Li} A_k := \bigcap_{(k_i), k_i\to\infty}
            \Big{(}\overline{\bigcup_{i\geq 0}A_{k_i}}\Big{)}
\]
satisfy $A=\mathrm{Li} A_k=\mathrm{Ls} A_k$. 

We write $j^*_{C,x}\colon\mathbb{R}^{n+1}\to\mathbb{R}\cup\{\infty\}$
to denote the \emph{Legendre-Fenchel transform} of $j_{C,x}$, so 
\[
j^*_{C,x}(\phi)=\sup_{y\in\mathbb{R}^{n+1}}\langle \phi,y\rangle - j_{C,x}(y)
                      \mbox{\quad for $\phi\in\mathbb{R}^{n+1}$}.
\]
The Legendre-Fenchel transform is a homeomorphism on the space
$\Lambda(\mathbb{R}^{n+1})$ 
with respect to the epi-graph topology~\cite[Proposition~7.2.11]{Beer}.
Furthermore it was proved in \cite[Lemma 3.15]{Wa1} that if
$T\subseteq\mathbb{R}^{n+1}$ is an open cone, then for each $x\in T$
we have that 
\[
j^*_{T,x}(\phi) = \left\{\begin{array}{ll}
 0 & \mbox{if $\phi\in\{\psi\in T^*\colon M(b/x;T)
                  \langle \psi,x\rangle\leq 1\}$}\\
  \infty  &\mbox{otherwise}.
\end{array}\right.
\]
For $T\in\mathcal{T}(C)$ and $x\in T$, define
\[
U_{T,x}=\{ \psi\in T^*\colon M(b/x;T)\langle \psi,x\rangle > 1\}. 
\]  
\begin{proposition}\label{prop:4.6}
Let $f_{S,p\mid C}$ and $f_{T,q\mid C}$ be Busemann points
of the Funk geometry on a proper open cone $C\subseteq\mathbb{R}^{n+1}$,
with $S$ and $T$ in $\mathcal{T}(C)\backslash\{C\}$.
Then
\[
\liminf_{k\to\infty} \mathcal{F}_C(b,x_k)+ f_{T,q}(x_k)
   = \left\{
         \begin{array}{ll}
            \mathcal{F}_S(b,p)+\mathcal{F}_T(p,q)-\mathcal{F}_T(b,q),
                   &\mbox{if $S\subseteq T$,}\\
            \infty, & \mbox{otherwise,}
         \end{array}\right.
\]
where the infimum is taken over all sequences in $C$ converging to
$f_{S,p\mid C}$ in the Funk sense on $C$.
\end{proposition}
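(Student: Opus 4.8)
The plan is to reduce the statement to a computation with the Legendre--Fenchel transforms $j^*$, whose effective domains are described in~\cite[Lemma~3.15]{Wa1}. Writing out the definitions,
\[
\mathcal{F}_C(b,x_k)+f_{T,q}(x_k)=\log\big(M(b/x_k;C)\,j_{T,q}(x_k)\big).
\]
Since $x_k\in C\subseteq T$ and $j_{T,q}=j_{T,q}^{**}$, biconjugation together with the cited formula for $j^*_{T,q}$ gives $j_{T,q}(x_k)=\sup_{\phi\in D_{T,q}}\langle\phi,x_k\rangle$, where $D_{T,q}=\{\phi\in T^*\colon M(b/q;T)\langle\phi,q\rangle\le 1\}$ is the effective domain of $j^*_{T,q}$. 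Hence, setting $\ell_k(\phi)=M(b/x_k;C)\langle\phi,x_k\rangle$,
\[
\mathcal{F}_C(b,x_k)+f_{T,q}(x_k)=\log\,\sup_{\phi\in D_{T,q}}\ell_k(\phi).
\]
The same lemma, applied to $C=\tau(C,0)$ and the point $x_k$, identifies $\ell_k$ with the gauge of $K_k:=\mathrm{dom}(j^*_{C,x_k})=\{\phi\in C^*\colon \ell_k(\phi)\le 1\}$, namely $\ell_k(\phi)=\inf\{t>0\colon \phi/t\in K_k\}$ for $\phi\in C^*$.

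Next I would convert the Funk-sense convergence of $(x_k)$ into convergence of the conjugates. Exponentiating, convergence to $f_{S,p\mid C}$ means $j_{C,x_k}\to j_{S,p}$ pointwise on $C$; as these are finite convex functions on the open set $C$, this is also epi-graph convergence. By continuity of the Legendre--Fenchel transform in the epi-graph topology~\cite[Proposition~7.2.11]{Beer}, $j^*_{C,x_k}\to j^*_{S,p}$ in that topology. Both sides being $\{0,\infty\}$-valued, this is exactly Painlev\'e--Kuratowski convergence of the domains, $K_k\to K_\infty$, where $K_\infty=\mathrm{dom}(j^*_{S,p})=\{\phi\in S^*\colon \ell_\infty(\phi)\le 1\}$ and $\ell_\infty(\phi)=M(b/p;S)\langle\phi,p\rangle$. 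Here $K_\infty\subseteq S^*\subseteq C^*$, since every cone in $\mathcal{T}(C)$ contains $C$.

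From $K_k\to K_\infty$ I would read off the two regimes. If $\phi\in C^*\setminus S^*$ and $\phi\neq 0$, then $\ell_k(\phi)\to\infty$: otherwise $\ell_k(\phi)\le R$ along a subsequence, so $\phi/R\in K_k$ there and $\phi/R\in\mathrm{Ls}\,K_k\subseteq K_\infty\subseteq S^*$, contradicting $\phi/R\notin S^*$. Now $S\subseteq T$ is equivalent to $T^*\subseteq S^*$. If $S\not\subseteq T$, choose $\phi_0\in T^*\setminus S^*$; a sufficiently small multiple $t\phi_0$ lies in $D_{T,q}\setminus S^*$, whence $\sup_{\phi\in D_{T,q}}\ell_k(\phi)\ge\ell_k(t\phi_0)\to\infty$ and the value is $\infty$ for every admissible sequence. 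If $S\subseteq T$, then $D_{T,q}\subseteq T^*\subseteq S^*$ and $\sup_{\phi\in D_{T,q}}\ell_\infty(\phi)=M(b/p;S)\,j_{T,q}(p)$, so the right-hand side of the proposition is precisely $\log\sup_{\phi\in D_{T,q}}\ell_\infty(\phi)$; it then remains to prove $\inf_{(x_k)}\liminf_k\sup_{D_{T,q}}\ell_k=\sup_{D_{T,q}}\ell_\infty$.

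The crux, and the step I expect to be hardest, is the gauge convergence $\ell_k\to\ell_\infty$ on $S^*$. The representing vector $M(b/x_k;C)\,x_k$ of $\ell_k$ has norm tending to infinity, because $K_k$ retreats out of the directions $C^*\setminus S^*$; one therefore cannot pass to the limit naively, but must show that this blow-up is confined to the directions conjugate to $C^*\setminus S^*$, leaving $\ell_k(\phi)\to\ell_\infty(\phi)$ for $\phi$ in the relative interior of $S^*$. I would extract this from $K_k\to K_\infty$ using convexity of the $K_k$ and the linear form of the gauges: for such $\phi$ and any $t>\ell_\infty(\phi)$ the point $\phi/t$ lies in the relative interior of $K_\infty$, so Painlev\'e--Kuratowski convergence eventually places it in $K_k$ and yields $\limsup_k\ell_k(\phi)\le t$, the reverse bound being analogous. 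A genuine complication is that $S^*$ may be lower-dimensional, since the tangent cones in $\mathcal{T}(C)$ need not be pointed, so the argument must be run inside $\mathrm{span}(S^*)$, with extra care when $D_{T,q}$ lies in a proper face of $S^*$. Granting the gauge convergence, the bound $\liminf_k\sup_{D_{T,q}}\ell_k\ge\sup_{D_{T,q}}\ell_\infty$ follows for every admissible sequence by testing at relative-interior points of $D_{T,q}$ and using continuity of $\ell_\infty$, while the reverse inequality is achieved along a single well-chosen almost-geodesic, such as the straight-line sequence of Lemma~\ref{lem:4.1}. Taking the infimum over sequences discards exactly the behaviour of $\ell_k$ on $\partial S^*$, which is the only part not already pinned down by the common limit $f_{S,p\mid C}$.
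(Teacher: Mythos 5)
Your reduction to Legendre--Fenchel conjugates is the same frame the paper uses (its sets $U_{C,x_k}$ are exactly the complements of your $K_k$ in $C^*$, and your identity $\mathcal{F}_C(b,x_k)+f_{T,q}(x_k)=\log\sup_{\phi\in D_{T,q}}\ell_k(\phi)$ is a reformulation of the paper's scaling identity), and your treatment of the case $S\not\subseteq T$ is sound. But the case $S\subseteq T$ has a genuine gap at precisely the step you flag as the crux. You derive the lower bound from two-sided gauge convergence $\ell_k\to\ell_\infty$ on the relative interior of $S^*$, justified by ``Painlev\'e--Kuratowski convergence eventually places $\phi/t$ in $K_k$'' for $\phi/t\in\mathrm{ri}\,K_\infty$. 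That implication is false in general when the limit set is lower-dimensional than the approximating sets, and that is exactly the situation here: for every $S\in\mathcal{T}(C)\setminus\{C\}$ one has $[0]_S\neq\{0\}$, so $K_\infty\subseteq S^*\subseteq C^*\cap [0]_S^{\perp}$ is contained in a proper face of $C^*$, while each $K_k=\{\phi\in C^*:\ell_k(\phi)\le 1\}$ is full-dimensional; PK convergence only gives approximation of points of $K_\infty$ by points of $K_k$, not eventual membership. Your proposed repair of working inside $\mathrm{span}(S^*)$ does not apply, since $K_k\not\subseteq\mathrm{span}(S^*)$. The point is that no $\limsup$ control is needed at all: the one-sided consequence of epi-convergence ($y\notin K_\infty$ implies $y\notin K_k$ eventually, i.e.\ $j^*_{C,x_k}(y)\to\infty$), applied to all positive multiples of a point and using homogeneity of the linear forms $\ell_k$, already yields $\liminf_k\ell_k(\phi)\ge\ell_\infty(\phi)$ pointwise, hence $\liminf_k\sup_{D_{T,q}}\ell_k\ge\sup_{D_{T,q}}\ell_\infty$. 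This is the paper's argument: it tests membership of points $y\in T^*\cap U_{S,p}$ in $U_{C,x_k}$ and never attempts two-sided convergence.

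The second gap is the attainment. You propose achieving the bound ``along a single well-chosen almost-geodesic, such as the straight-line sequence of Lemma~\ref{lem:4.1}''. A straight-line sequence converges to a Busemann point whose Funk part has $S=\tau(C,z)$, a \emph{first-order} tangent cone; a general $S\in\mathcal{T}(C)\setminus\{C\}$ is an iterated tangent cone, and no straight line in $C$ converges to $f_{S,p\mid C}$ in the Funk sense. The paper's proof devotes its entire second half to this: it takes a chain $C=S_1,\dots,S_N=S$ with $S_j\in\Gamma(\{S_{j-1}\})$ and recursively perturbs the constant sequence at $p$ through the chain, using \cite[Lemmas~3.1 and~3.3]{Wa1} and a dense family of test points to preserve Funk-sense convergence at each stage. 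Crucially, the value is controlled by a cancellation your sketch never addresses: at each stage the term $-\log\lambda_k$ appearing in $\mathcal{F}_{S_N}(w,y_k)=\mathcal{F}_{S_N}(w,x_k)-\log\lambda_k$ is cancelled by the $+\log\lambda_k$ in $\mathcal{F}_T(y_k,q)=\mathcal{F}_T(x_k,q)+\log\lambda_k$, which is legitimate only because the perturbation direction $x$ lies in $[0]_S\subseteq[0]_T$ --- this is where the hypothesis $S\subseteq T$ enters the construction. Without this (or some substitute), your infimum-over-sequences upper bound $\inf_{(x_k)}\liminf_k\sup_{D_{T,q}}\ell_k\le\sup_{D_{T,q}}\ell_\infty$ is unproven, and the closing remark that the infimum ``discards exactly the behaviour of $\ell_k$ on $\partial S^*$'' is not an argument. (A lesser issue: your one-line claim that pointwise convergence of $j_{C,x_k}$ on the open cone $C$ is already epi-convergence on $\mathbb{R}^{n+1}$ is not a general fact; the paper relies on \cite[Lemma~4.15]{Wa1} for precisely this.)
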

\begin{proof}
Let $(x_k)_k$ be any sequence in $C$ converging to $f_{S,p}$ in the Funk sense.
By Lemma~4.15 of~\cite{Wa1}, $j_{C,x_k}$ converges to $j_{S,p}$
in the epigraph topology, and so $j^*_{C,x_k}$ converges to $j^*_{S,p}$.
Let $y\in C^*$ be such that $j^*_{S,p}(y)= \infty$.
The properties of epi-convergence imply that $j^*_{C,x_k}(y)$ converges to
$\infty$. Therefore, $y\in U_{C,x_k}$ for $k$ large enough.

\newcommand\dotprod[2]{\langle{#1},{#2}\rangle}

Observe that
\begin{align}
 \mathcal{F}_C(b,x_k) + f_{T,q}(x_k)
   &= \log\Big(
      M(b/x_k;C)
      \sup_{z\in T^*} \frac{\dotprod{z}{x_k}}{\dotprod{z}{q}}
      \frac{1}{M(b/q;T)}
      \Big) \\
\label{eqn:logsup}
   &= \log\Big(
      \sup_{z\in T^*\cap U_{C,x_k}} \frac{1}{\dotprod{z}{q}}
      \frac{1}{M(b/q;T)}
      \Big).
\end{align}

Suppose that $S$ is not a subset of $T$. Then $T^*$ is not a
subset of $S^*$ and we can consider a point $y\in T^* \backslash S^*$.
As $j^*_{S,p}(\alpha y)= \infty$ for all $\alpha>0$, we know that 
$\alpha y\in U_{C,x_k}$ for $k$ large enough.
So,
\begin{align*}
\liminf_{k\to\infty} \mathcal{F}_C(b,x_k)+ f_{T,q}(x_k)
   \geq \log\frac{1}{\dotprod{\alpha y}{q}}\frac{1}{M(b/q;T)}.
\end{align*}
But $\alpha$ can be chosen to be as small as we like, and so, in this case,
\begin{align*}
\liminf_{k\to\infty} \mathcal{F}_C(b,x_k)+ f_{T,q}(x_k) = \infty.
\end{align*}

Now suppose that $S \subseteq T$. For any $y\in U_{S,p}$ we know that 
$j^*_{S,p}(y)= \infty$. Thus, as before, $y\in U_{C,x_k}$
for all $k$ large enough.
Therefore, from~(\ref{eqn:logsup}),
\begin{align*}
\liminf_{k\to\infty} \mathcal{F}_C(b,x_k)+ f_{T,q}(x_k)
   &\geq \log\Big(
      \sup_{z\in T^*\cap U_{S,p}} \frac{1}{\dotprod{z}{q}}
      \frac{1}{M(b/q;T)}
      \Big) \\
   & = \log\Big(
      M(b/p;S)
      \sup_{z\in T^*} \frac{\dotprod{z}{p}}{\dotprod{z}{q}}
      \frac{1}{M(b/q;T)}
      \Big) \\
   & = \mathcal{F}_S(b,p) + \mathcal{F}_T(p,q) - \mathcal{F}_T(b,q).
\end{align*}

We now wish to show that this bound can be attained by a judicious choice
of the sequence $(x_k)_k$. 

Since $S\in\mathcal{T}(C)\backslash\{C\}$, there exists a finite sequence
of cones
$(S_k)_{1\le k\le N}$ such that $S_k\in\Gamma(\{S_{k-1}\})$ for all $1<k\le N$,
and $S_1=C$ and $S_N=S$.
Let $x\in\partial S_{N-1}$ be such that $S_N = \tau(S_{N-1}, x)$.
Define the constant sequence $x_k = p$, for all $k\in\mathbb{N}$.
Obviously, $(x_k)$ converges to $f_{S,p}$ in the Funk sense on $S_N$.
Let $(w_k)_k$ be a sequence of points in $S_{N-1}$ such that
$W=\bigcup_k \{w_k\}$ is dense in $S_{N-1}$ and contains the basepoint~$b$.
For each $k\in\mathbb{N}$, let $y_k= (1-\lambda_k) x + \lambda_k x_k$,
where the sequence $(\lambda_k)_k$ of positive reals is chosen so that,
for each $k\in\mathbb{N}$,
\begin{align}
\label{eqn:busemann2}
y_k &\in S_{N-1},
\qquad\text{and}\\
\label{eqn:busemann3}
\Big|\mathcal{F}_{S_{N-1}}(w,y_k) - \mathcal{F}_{S_N}(w,y_k)\Big| &< \frac{1}{k},
   \qquad\text{for all $w\in\{w_0,\dots,w_k\}$}.
\end{align}
Inclusion~(\ref{eqn:busemann2}) holds when $\lambda_k$ is small enough,
and, by \cite[Lemma 3.3]{Wa1},
the same is true for~(\ref{eqn:busemann3}).
By \cite[Lemma 3.1]{Wa1},
\begin{align}
\label{eqn:busemann6}
\mathcal{F}_{S_N}(w,y_k)
   &= \mathcal{F}_{S_N}(w,x_k) - \log\lambda_k,
\qquad \text{for all $k\in\mathbb{N}$ and $w\in S_N$}.
\end{align}
Let $w\in W$. For $k\in\mathbb{N}$ large enough,
both $b$ and $w$ are in $\{w_0,\dots,w_k\}$.
So, applying~(\ref{eqn:busemann3}) and~(\ref{eqn:busemann6}) twice each, we get
\begin{equation*}
\Big|\mathcal{F}_{S_{N-1}}(w,y_k) - \mathcal{F}_{S_{N-1}}(b,y_k)
            - \mathcal{F}_{S_N}(w,x_k) + \mathcal{F}_{S_N}(b,x_k) \Big|
     < \frac{2}{k}.
\end{equation*}
We conclude that $\mathcal{F}_{S_{N-1}}(w,y_k) - \mathcal{F}_{S_{N-1}}(b,y_k)$ converges to $f_{S,p}(w)$ as $k$ tends to infinity.
Since this holds for all $w$ in a dense subset of $S_{N-1}$,
we see that $(y_k)$ converges to $f_{S,p}$ in the Funk sense on $S_{N-1}$.

Since $x\in[0]_S$ and $S\subseteq T$, we have that $x\in[0]_T$.
Therefore, by \cite[Lemma 3.1]{Wa1} again,
\begin{align}
\label{eqn:busemann7}
\mathcal{F}_T(y_k,q)
   &= \mathcal{F}_T(x_k,q) + \log\lambda_k,
\qquad \text{for all $k\in\mathbb{N}$}.
\end{align}
We combine~(\ref{eqn:busemann3}), (\ref{eqn:busemann6}),
and~(\ref{eqn:busemann7}) to get
\begin{align*}
\mathcal{F}_{S_{N-1}}(b,y_k) + \mathcal{F}_T(y_k,q)
        < \mathcal{F}_{S_N}(b,x_k)+ \mathcal{F}_T(x_k,q) + \frac{1}{k},
\end{align*}
for all $k\in\mathbb{N}$.

We can iterate the above argument to get a sequence $(z_k)$ in $C$ such that
$z_k$ converges to $f_{S,p\mid C}$ in the Funk sense on $C$, and such that
\begin{align*}
\mathcal{F}_C(b,z_k) + \mathcal{F}_T(z_k,q)
        < \mathcal{F}_S(b,p) + \mathcal{F}_T(p,q) + \frac{N}{k},
\end{align*}
for all $k\in\mathbb{N}$.
Taking the limit inferior and subtracting $\mathcal{F}_{T}(b,q)$, we get
\begin{align*}
\liminf_{k\to\infty} \mathcal{F}_C(b,z_k)+ f_{T,q}(z_k)
   & \leq \mathcal{F}_S(b,p) + \mathcal{F}_T(p,q) - \mathcal{F}_T(b,q).
\end{align*}
\end{proof}
Reasoning exactly as in the proof of Lemma \ref{lem:3.2} with 
$\mathcal{F}_C$ for $d$ gives the following result.
\begin{lemma}\label{lem:4.7.2} 
If $f_{S,p\mid C}$ and $f_{T,q\mid C}$ are Busemann points
of the Funk geometry on a proper open cone $C\subseteq\mathbb{R}^{n+1}$,
with $S$ and $T$ in $\mathcal{T}(C)\backslash\{C\}$, and $(z_k)_k$ is an
almost-geodesic in $C$ with respect to the Funk metric that
converges to $f_{S,p\mid C}$ in the Funk sense, then 
\[
\lim_{k\to\infty} \mathcal{F}_C(b,z_k)+f_{T,q}(z_k) = \inf_{(x_k)_k}\liminf_{k\to\infty} \mathcal{F}_C(b,x_k)+ f_{T,q}(x_k)
 \]
where the infimum is taken over all sequences in $C$ converging to $f_{S,p\mid C}$
in the Funk sense on $C$.
\end{lemma}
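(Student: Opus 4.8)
The plan is to transcribe the proof of Lemma~\ref{lem:3.2} almost verbatim, replacing the genuine metric $d$ by the Funk metric $\mathcal{F}_C$, the Busemann point $\xi$ by $f_{S,p\mid C}$, the horofunction $\eta$ by $f_{T,q\mid C}$, and the detour cost $H(\xi,\eta)$ by the quantity
\[
H_{\mathcal{F}} := \inf_{(x_k)_k}\liminf_{k\to\infty} \mathcal{F}_C(b,x_k)+ f_{T,q}(x_k),
\]
which is precisely the right-hand side of the asserted identity, the infimum running over all sequences converging to $f_{S,p\mid C}$ in the Funk sense. With this dictionary the lemma is exactly the second assertion of Lemma~\ref{lem:3.2} interpreted in the Funk geometry, and the first assertion of that lemma becomes the auxiliary identity $\lim_k \mathcal{F}_C(b,z_k)+f_{S,p}(z_k)=0$.

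First I would establish this auxiliary identity. Writing $b=z_0$, the Funk $\epsilon$-almost-geodesic inequality for $(z_k)$ together with the triangle inequality for $\mathcal{F}_C$ gives $\mathcal{F}_C(z_0,z_s)+\mathcal{F}_C(z_s,z_t)\le\mathcal{F}_C(z_0,z_t)+\epsilon$ for $s\le t$ large; subtracting $\mathcal{F}_C(z_0,z_t)$, letting $t\to\infty$, and using Funk-sense convergence of $(z_k)$ (so that $\mathcal{F}_C(z_s,z_t)-\mathcal{F}_C(z_0,z_t)\to f_{S,p}(z_s)$) yields $\limsup_s\mathcal{F}_C(z_0,z_s)+f_{S,p}(z_s)\le 0$, while the matching lower bound is immediate from the triangle inequality alone. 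The base-point change from $z_0$ to a general $b$ is handled as in Lemma~\ref{lem:3.2}, since converging to $f_{S,p}$ in the Funk sense relative to $z_0$ amounts to converging to $f_{S,p}-f_{S,p}(b)$ relative to $b$.

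For the main identity I would use the Lipschitz-type bound $f_{T,q}(x)-f_{T,q}(z)\le\mathcal{F}_C(x,z)$, valid because $f_{T,q}$ is a pointwise Funk-sense limit of maps $x\mapsto\mathcal{F}_C(x,w_k)-\mathcal{F}_C(b,w_k)$, each of which obeys $\mathcal{F}_C(x,w_k)-\mathcal{F}_C(z,w_k)\le\mathcal{F}_C(x,z)$ by the triangle inequality. Evaluating at $z=x_k$ along a near-optimal sequence for $H_{\mathcal{F}}$ gives $f_{T,q}(x)\le f_{S,p}(x)+H_{\mathcal{F}}$ for every $x$; setting $x=z_k$, adding $\mathcal{F}_C(b,z_k)$, taking $\limsup_k$, and invoking the auxiliary identity yields $\limsup_k\mathcal{F}_C(b,z_k)+f_{T,q}(z_k)\le H_{\mathcal{F}}$. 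The reverse inequality is automatic, because $(z_k)$ is itself one of the Funk-sense-convergent sequences over which $H_{\mathcal{F}}$ is an infimum, so $H_{\mathcal{F}}\le\liminf_k\mathcal{F}_C(b,z_k)+f_{T,q}(z_k)$. Chaining $\limsup\le H_{\mathcal{F}}\le\liminf$ forces the limit to exist and equal $H_{\mathcal{F}}$.

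The one point that genuinely needs checking---and the main obstacle---is that the argument of Lemma~\ref{lem:3.2} nowhere invokes the symmetry or the non-negativity of the underlying ``metric'', since $\mathcal{F}_C$ enjoys neither property. Tracing through, the only structural inputs are the triangle inequality $\mathcal{F}_C(x,z)\le\mathcal{F}_C(x,y)+\mathcal{F}_C(y,z)$ and the normalization $\mathcal{F}_C(x,x)=0$ recorded in Section~\ref{sec:2}; in particular I must keep the two arguments of $\mathcal{F}_C$ in the correct order at every step, the first always being the ``running'' point and the second the sequence point. Once one verifies that no step appeals to $\mathcal{F}_C\ge 0$ or to $\mathcal{F}_C(x,y)=\mathcal{F}_C(y,x)$, the transcription is routine.
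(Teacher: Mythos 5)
Your proposal is correct and is essentially the paper's own proof: the paper establishes this lemma precisely by the remark that one should reason exactly as in the proof of Lemma~\ref{lem:3.2} with $\mathcal{F}_C$ in place of $d$, which is the transcription you carry out. Your closing check that the argument uses only the triangle inequality and $\mathcal{F}_C(x,x)=0$, never symmetry or non-negativity, is exactly the verification the paper leaves implicit.
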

By combining Propositions~\ref{prop:4.5} and~\ref{prop:4.6}, we obtain the
following formula for the detour metric. 
\begin{theorem}\label{thm:4.7} 
If $g= r_{C,x} + f_{S,p\mid C}$ and $h=r_{C,y}+f_{T,q\mid C}$ are Busemann
points of the Hilbert geometry on a proper open cone
$C\subseteq\mathbb{R}^{n+1}$, then
\[
\delta(g,h)= \left\{ \begin{array}{ll} d_F(x,y)+d_S(p,q) &\mbox{if $x$ and $y$ have the same face $F$, and $S=T$,} \\
    \infty & \mbox{otherwise.}
\end{array}\right.
\]
\end{theorem}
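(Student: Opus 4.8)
The plan is to compute the detour metric directly from its definition $\delta(g,h)=H(g,h)+H(h,g)$, evaluating each detour cost by splitting it into a reverse-Funk contribution and a Funk contribution. The starting point is the decomposition derived above from Lemma~\ref{lem:3.2}: if $(x_k)_k$ is an almost-geodesic converging to $g$, then
\[
H(g,h)=\lim_{k\to\infty}\Big(\mathcal{RF}_C(b,x_k)+r_{C,y}(x_k)\Big)+\Big(\mathcal{F}_C(b,x_k)+f_{T,q}(x_k)\Big).
\]
By Theorem~\ref{thm:4.4} I may choose $(x_k)_k$ to converge in the norm topology to $x$ and in the Funk sense to $f_{S,p}$, and by \cite[Lemma 4.1]{Wa1} this single sequence is simultaneously an almost-geodesic for the reverse-Funk metric and for the Funk metric. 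This is precisely what is needed to feed the two parenthesised limits into the two component propositions.

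First I would treat the reverse-Funk term. Since $(x_k)_k$ converges in norm to $x$ and is a reverse-Funk almost-geodesic, Proposition~\ref{prop:4.5} gives that $\lim_k \mathcal{RF}_C(b,x_k)+r_{C,y}(x_k)$ equals $\mathcal{RF}_C(b,x)+\mathcal{RF}_F(x,y)-\mathcal{RF}_C(b,y)$ when $y$ lies in the face $F$ of $x$, and equals $\infty$ otherwise. Next I would treat the Funk term. Since $(x_k)_k$ is a Funk almost-geodesic converging to $f_{S,p}$ in the Funk sense, Lemma~\ref{lem:4.7.2} identifies $\lim_k \mathcal{F}_C(b,x_k)+f_{T,q}(x_k)$ with the infimum of Proposition~\ref{prop:4.6}, namely $\mathcal{F}_S(b,p)+\mathcal{F}_T(p,q)-\mathcal{F}_T(b,q)$ when $S\subseteq T$, and $\infty$ otherwise. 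Adding these, $H(g,h)$ is finite exactly when $y$ lies in the face of $x$ and $S\subseteq T$, in which case it is the sum of the two expressions.

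The same argument applied to an almost-geodesic converging to $h$ shows that $H(h,g)$ is finite exactly when $x$ lies in the face of $y$ and $T\subseteq S$, and then equals $\mathcal{RF}_C(b,y)+\mathcal{RF}_F(y,x)-\mathcal{RF}_C(b,x)+\mathcal{F}_T(b,q)+\mathcal{F}_S(q,p)-\mathcal{F}_S(b,p)$. Because both detour costs are non-negative by Proposition~\ref{prop:3.3}, $\delta(g,h)$ is finite iff both $H(g,h)$ and $H(h,g)$ are. Now $y$ lying in the face of $x$ together with $x$ lying in the face of $y$ is equivalent to $x$ and $y$ sharing a common face $F$: each point lies in the relative interior of its own (smallest) face, so mutual membership forces the two faces to coincide. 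Likewise $S\subseteq T$ and $T\subseteq S$ force $S=T$. This produces the dichotomy in the statement.

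Finally, in the finite case I would add $H(g,h)$ and $H(h,g)$ with $F$ the common face and $S=T$. The base-point terms telescope: $\mathcal{RF}_C(b,x)$ and $\mathcal{RF}_C(b,y)$ cancel against their negatives, as do $\mathcal{F}_S(b,p)$ and $\mathcal{F}_S(b,q)$, leaving $\mathcal{RF}_F(x,y)+\mathcal{RF}_F(y,x)$ from the reverse-Funk parts and $\mathcal{F}_S(p,q)+\mathcal{F}_S(q,p)$ from the Funk parts. By the definition of Hilbert's projective metric on $F$ and on $S$, these sums are exactly $d_F(x,y)$ and $d_S(p,q)$, giving $\delta(g,h)=d_F(x,y)+d_S(p,q)$. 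The main obstacle is the bookkeeping of the middle step: one must verify that the \emph{single} almost-geodesic furnished by Theorem~\ref{thm:4.4} legitimately serves both Proposition~\ref{prop:4.5} and Lemma~\ref{lem:4.7.2}, which is exactly where the fact \cite[Lemma 4.1]{Wa1} that a Hilbert almost-geodesic is an almost-geodesic for each constituent metric is indispensable; the surrounding algebra is then routine.
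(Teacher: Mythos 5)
Your proposal is correct and follows essentially the same route as the paper: decompose the detour cost via Lemma~\ref{lem:3.2} into the reverse-Funk and Funk contributions, use the almost-geodesic from Theorem~\ref{thm:4.4} (which, by \cite[Lemma 4.1]{Wa1}, is simultaneously an almost-geodesic for both constituent metrics) to invoke Proposition~\ref{prop:4.5} and Lemma~\ref{lem:4.7.2} with Proposition~\ref{prop:4.6}, then symmetrise so that $S\subseteq T$ and $T\subseteq S$ force $S=T$, mutual face membership forces a common face $F$, and the base-point terms cancel to leave $d_F(x,y)+d_S(p,q)$. Your explicit verification that mutual face membership yields a common face, glossed in the paper, is a welcome bit of added care but not a departure in method.
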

\begin{proof}
Using~\cite[Lemma~4.3]{Wa1} and the formulae in the proof
of~\cite[Theorem~1.1, p.~524]{Wa1}, we get that there exists an almost-geodesic
$(x_k)_k$ in $C$ converging to $x$ in the norm topology and to $f_{S,p\mid C}$
in the Funk sense.
Recall that each almost-geodesic under Hilbert's projective metric 
is an almost-geodesic under the Funk metric and the reverse-Funk metric. 

Therefore we can combine Lemmas~\ref{lem:3.2} and~\ref{lem:4.7.2}
and Propositions~\ref{prop:4.5} and~\ref{prop:4.6} to deduce 
\begin{eqnarray*}
\delta(g,h) & = & H(g,h)+H(h,g)\\
                  & = & \mathcal{RF}_F(x,y)+\mathcal{RF}_F(y,x)
                          + \mathcal{F}_T(p,q) + \mathcal{F}_S(q,p)\\
                  & = & d_F(x,y)+d_S(p,q), 
\end{eqnarray*}
if $x$ and $y$ have the same face $F$ and $S=T$.
In the contrary case, we get that $\delta(g,h)=\infty$.
\end{proof}

\section{Isometric actions on parts} \label{sec:5}

We now analyse how isometries between polyhedral Hilbert geometries act
on parts. By Lemma~\ref{lem:3.4}, each isometry $g\colon X\to Y$ preserves
the detour metric, and hence maps parts to parts. If $X$ is a Hilbert geometry,
then it follows from Theorem~\ref{thm:4.7} that there is a one-to-one
correspondence between the parts of the horoboundary of $(X,d_X)$ and pairs of the form
$(F,U)$, where $F$ is a (relatively) open face of the open cone $C_X$ generated
by $X$, and $U\in\mathcal{T}(\tau(C_X,z))$ for some $z$ in $F$.
Moreover, the part corresponding to $(F,U)$ is isometric to
$(F\times U', d_{F\times U'})$, where $U'=U/[0]_U$ and  
\[
d_{F\times U'}((x,u),(y,v))= d_F(x,y)+d_{U'}(u,v)
    \mbox{\quad for all $x,y\in F$ and $u,v\in U'$}.
\]
A part of a polyhedral Hilbert geometry $(X,d_X)$ is called a
\emph{vertex part} if the corresponding pair is of the form
$(F_z,\tau(C_X,z))$, where $F_z$ is a ray through a vertex
$z\in\partial X\subseteq\partial C_X$ of $X$.
It is said to be a \emph{facet part} if the pair is of the form 
$(F,\tau(C_X,z))$, where $F$ is a (relatively) open facet of $C$, i.e., $\dim F =n$, and $z\in F$. 
Note that for a facet part, $\tau(C_X,z)$ is the open half-space
$\{x\in\mathbb{R}^{n+1}\colon \langle\phi,x\rangle>0\}$ with $\phi\in C_X^*$
the facet defining functional of $F$. The main objective of this section is to
prove that an isometry between polyhedral Hilbert geometries either maps vertex
parts to vertex parts, and facet parts to facet parts, or it interchanges them.
Recall that, as the topology of the Hilbert metric coincides with the
norm topology, isometric Hilbert geometries must have the same dimension.
We start with the following basic observation.

\begin{lemma}\label{lem:5.1} 
If $(X,d_X)$ and $(Y,d_Y)$ are polyhedral Hilbert geometries and
$g\colon X\to Y$ is an isometry, then $g$ maps parts corresponding to pairs
of the form $(F,\tau(C_X,z))$, with $F$ a relatively open face of the cone
$C_X$ generated by $X$ and $z\in F$, to parts corresponding to pairs
$(F',\tau(C_Y,z'))$, with $F'$ a relatively open face of the cone
$C_Y$ generated by $Y$ and $z'\in F'$.
\end{lemma}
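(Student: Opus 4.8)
The plan is to characterise the parts coming from pairs $(F,\tau(C_X,z))$ intrinsically in terms of the detour metric, by showing that they are exactly the parts of maximal dimension, and then to invoke the fact that isometries preserve dimension. Recall from Lemma~\ref{lem:3.4} that $g$ preserves the detour metric $\delta$, and hence restricts to a $\delta$-isometry from each part of the horoboundary of $(X,d_X)$ onto a part of the horoboundary of $(Y,d_Y)$. By the discussion preceding the lemma, which rests on Theorem~\ref{thm:4.7}, the part corresponding to a pair $(F,U)$ is isometric to the product Hilbert geometry $(F\times U',d_{F\times U'})$ with $U'=U/[0]_U$. Since a Hilbert geometry carries the norm topology, this part is a manifold, and an isometry is in particular a homeomorphism, so $g$ maps each part to a part of the same topological dimension; recall also that $\dim X=\dim Y=n$. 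Thus it suffices to show that the parts of the form $(F,\tau(C_X,z))$ are precisely the parts of dimension $n-1$, and that no part has larger dimension.

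First I would compute the dimension of the part $(F,U)$. Since $F$ is a face of the proper cone $C_X$ it is pointed, so its Hilbert geometry has dimension $\dim F-1$, where $\dim F$ is the dimension of the linear span of $F$; and since $U$ is an open cone in $\mathbb{R}^{n+1}$, the Hilbert geometry on $U'$ has dimension $n-\dim[0]_U$. Hence the part has dimension $(\dim F-1)+(n-\dim[0]_U)$. As $z$ lies in the relative interior of $F$, the functionals in $C_X^*$ vanishing at $z$ are exactly those vanishing on all of $F$; combining this with Lemma~\ref{lem:tangent_cone_formula} gives $[0]_{\tau(C_X,z)}=\mathrm{span}(F)$, and in particular $\dim[0]_{\tau(C_X,z)}=\dim F$. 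Substituting $U=\tau(C_X,z)$ then shows that the special part $(F,\tau(C_X,z))$ has dimension $(\dim F-1)+(n-\dim F)=n-1$, independently of which face $F$ is chosen.

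The heart of the argument is to show that every other admissible $U$ yields a strictly smaller dimension, i.e.\ that $\dim[0]_U>\dim[0]_{\tau(C_X,z)}$ whenever $U\in\mathcal{T}(\tau(C_X,z))$ and $U\neq\tau(C_X,z)$. For this I would use the defining iteration of $\mathcal{T}$: any such $U$ is obtained from $\tau(C_X,z)$ by finitely many steps $T'\mapsto\tau(T',w)$ with $w\in\partial T'$. Using Lemma~\ref{lem:tangent_cone_formula} one checks that $[0]_{\tau(T',w)}=\mathrm{span}(G)$, where $G$ is the minimal face of $T'$ containing $w$; since $G$ always contains the lineality space $[0]_{T'}$, this gives $\dim[0]_{\tau(T',w)}\geq\dim[0]_{T'}$, with equality precisely when $w\in[0]_{T'}$, in which case the same formula yields $\tau(T',w)=T'$. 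Thus the lineality dimension never decreases along the iteration and stays constant only through trivial steps that return the same cone, so $\tau(C_X,z)$ is the unique element of $\mathcal{T}(\tau(C_X,z))$ attaining the minimal lineality dimension $\dim F$; every other $U$ satisfies $\dim[0]_U\geq\dim F+1$, whence its part has dimension at most $n-2$.

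Putting this together, the parts of dimension $n-1$ are exactly those of the form $(F,\tau(C_X,z))$, and $n-1$ is the maximal dimension of any part. Applying this characterisation to both $X$ and $Y$, and using that $g$ carries each part to a part of equal dimension, shows that $g$ sends parts of the form $(F,\tau(C_X,z))$ to parts of the form $(F',\tau(C_Y,z'))$, as required. I expect the main obstacle to be the last computation in the previous paragraph, namely ruling out the possibility that some non-special pair $(F,U)$ accidentally produces a part of dimension $n-1$; the monotonicity of the lineality dimension under the tangent-cone operation, together with the rigidity of the equality case, is exactly what resolves this.
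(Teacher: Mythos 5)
Your proposal is correct and follows essentially the same route as the paper's own proof: both arguments identify the parts corresponding to pairs $(F,\tau(C_X,z))$ as exactly the parts of maximal dimension $n-1$, using $\dim[0]_{\tau(C_X,z)}=\dim F$ and the product description from Theorem~\ref{thm:4.7}, and then invoke that an isometry preserves the detour metric and hence the dimension of parts. The only difference is that you supply a detailed justification (monotonicity of the lineality dimension under the tangent-cone operation, with rigidity in the equality case) for the step that the paper merely asserts, namely that $\tau(C_X,z)$ strictly maximises the Hilbert-geometry dimension among the cones in $\mathcal{T}(\tau(C_X,z))$; this added detail is sound for polyhedral cones.
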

\begin{proof}
Note that the dimension of the Hilbert geometry on an open cone
$T\subseteq\mathbb{R}^{n+1}$ is equal to $n-\mathrm{dim}\,[0]_T$.
Thus, for $z\in\partial X\subseteq\mathbb{R}^{n+1}$, the dimension of the
Hilbert geometry on $\tau(C,z)$ is greater than the dimension of any other
open cone in $\mathcal{T}(\tau(C,z))$. Clearly, if $F$ is a relatively open
face of $C$ and $z\in F$, then
$\mathrm{dim}\, [0]_{\tau(C,z)} = \mathrm{dim}\, F$.
On the other hand, the Hilbert geometry on $F$ has dimension equal to
$\mathrm{dim}\, F -1$. Thus, the parts corresponding to pairs
$(F,\tau(C,z))$, with $F$ a relatively open face of $C$ and $z\in F$,
are precisely those that have maximal dimension $n-1$. The same is true for
parts of $(Y,d_Y)$ corresponding to pairs $(F',\tau(C_Y,z'))$, with $F'$ a
relatively open face of the cone $C_Y$ generated by $Y$ and $z'\in F'$.
As the topology of the Hilbert geometry coincides with the norm topology,
it follows from Theorem \ref{thm:4.7} that $g\colon X\to Y$ must preserve
the dimension of the parts. This completes the  proof.  
\end{proof}

Before we start proving the main result of this section we recall,
for definiteness, several basic concepts from metric geometry and prove some
auxiliary statements. Given a metric space $(X,d)$ and an interval
$I\subseteq\mathbb{R}$, a map $\gamma\colon I\to X$ is called a \emph{geodesic}
if 
\[
d(\gamma(s),\gamma(t))=|s-t|\mbox{\quad  for all }s,t\in I. 
\] 
If $I=[a,b]$ with $-\infty<a<b<\infty$, the image of $\gamma$ is called a
\emph{geodesic segment} connecting $\gamma(a)$ and $\gamma(b)$.
Likewise if $I=\mathbb{R}$, we call the image of $\gamma$ a
\emph{geodesic line}. A geodesic line is said to be \emph{unique}
if for each finite interval $[s,t]\subset\mathbb{R}$, the geodesic segment
$\gamma([s,t])$ is the only one connecting $\gamma(s)$ and $\gamma(t)$.
A subset $U\subseteq X$ is said to be \emph{geodesically closed}
if for every $u,v\in U$, every geodesic segment connecting $u$ and $v$
is contained in $U$. 
In the Hilbert geometry, since straight-line segments are geodesic segments,
geodesically closed sets are convex.

The following result is well known.
\begin{lemma}[{\cite[Proposition 2]{dlH}}]\label{lem:5.2}
Let $(X,d_X)$ be a Hilbert geometry.
If $\ell$ is a straight-line intersecting $X$ and $\ell$ intersects
$\partial X$ at an extreme point, then $\ell\cap X$ is a unique-geodesic line.
Conversely, if $\Gamma$ is a unique-geodesic line in $(X,d_X)$,
then $\Gamma=\ell\cap X$ for some straight-line $\ell$. 
\end{lemma}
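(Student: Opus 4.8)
The plan is to treat the two implications separately, beginning with the converse, which is the easier one. Suppose $\Gamma=\gamma(\mathbb{R})$ is a unique-geodesic line. Since straight-line segments are geodesic segments, for any $s_1<s_2<s_3$ the straight segment $[\gamma(s_1),\gamma(s_3)]$ is a geodesic joining $\gamma(s_1)$ and $\gamma(s_3)$; by uniqueness it must coincide with $\gamma([s_1,s_3])$, so in particular $\gamma(s_2)$ lies on the straight segment $[\gamma(s_1),\gamma(s_3)]$. As every triple of points of $\Gamma$ is therefore collinear, all of $\Gamma$ lies on a single straight line $\ell$, and, being a complete geodesic, it exhausts $\ell\cap X$. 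Hence $\Gamma=\ell\cap X$.

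For the forward implication, let $\ell$ meet $\partial X$ at $p$ and $q$, with $p$ an extreme point of $\overline X$. Since distances blow up at both ends, $\ell\cap X$ is isometric to $\mathbb{R}$ and, straight segments being geodesics, is a geodesic line; it remains to establish uniqueness. Fix $x,y\in\ell\cap X$ in the order $p,x,y,q$ along $\ell$, and suppose $z$ lies on some geodesic from $x$ to $y$, so that $d_X(x,z)+d_X(z,y)=d_X(x,y)$. Writing $d_X=\mathcal{F}_{C_X}+\mathcal{RF}_{C_X}$ and using that each of the Funk and reverse-Funk terms already satisfies the triangle inequality, the sum can be additive only if both summands are; in particular
\[
\mathcal{RF}_{C_X}(x,z)+\mathcal{RF}_{C_X}(z,y)=\mathcal{RF}_{C_X}(x,y).
\]

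I would then exploit the supremum representation~(\ref{eq:2.2}). The value $M(y/x;C_X)$ is attained precisely on the exposed face $\Phi^*$ of $C_X^*$ cut out by the boundary point $M(y/x;C_X)\,x-y$, which lies on the ray through $p$. Choosing $\phi^*$ in the relative interior of $\Phi^*$ makes its contact set $\overline{C_X}\cap\ker\phi^*$ equal to the minimal face $E$ of $\overline{C_X}$ containing the ray through $p$. Reverse-Funk additivity forces this same $\phi^*$ to be optimal for the pairs $(x,z)$ and $(z,y)$ as well, which places $M(z/x;C_X)\,x-z$ and $M(y/z;C_X)\,z-y$ in the contact set $E$. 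Because $p$ is extreme, $E$ is exactly the ray through $p$, so $M(y/z;C_X)\,z-y$ is a nonnegative multiple of $p$, forcing $z$ onto the segment $[p,y]\subseteq\ell$. Thus every point of any geodesic from $x$ to $y$ lies on $\ell$, and since $\ell\cap X$ is isometric to $\mathbb{R}$ the additivity relation then confines $z$ to $[x,y]$. The straight segment is therefore the unique geodesic between $x$ and $y$; as $x,y$ were arbitrary, $\ell\cap X$ is a unique-geodesic line.

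The main obstacle is the reverse-Funk step: one must show that equality in the reverse-Funk triangle inequality is governed by a \emph{single} optimal functional, and, crucially, that the right representative—taken in the relative interior of the optimal face $\Phi^*$ of $C_X^*$—has contact set equal to the \emph{minimal} face containing $p$ rather than some larger supporting face. This is where extremality of $p$ enters, collapsing that minimal face to a ray; the identical computation carried out at a non-extreme endpoint yields a genuinely two-dimensional family of geodesics, confirming that the hypothesis cannot be weakened.
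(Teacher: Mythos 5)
Your converse direction is correct and complete: uniqueness applied to triples $\gamma(s_1),\gamma(s_2),\gamma(s_3)$ forces collinearity, and since the chord with the restricted metric is a one-dimensional Hilbert geometry isometric to $\mathbb{R}$, the geodesic line exhausts $\ell\cap X$. Note that the paper itself offers no proof of this lemma --- it quotes it from de la Harpe [dlH, Proposition~2] --- so your Funk/reverse-Funk decomposition is a genuinely independent route, and its first steps are sound: additivity of $d_X$ does force additivity of both $\mathcal{F}_{C_X}$ and $\mathcal{RF}_{C_X}$, the maximisers of~(\ref{eq:2.2}) for the pair $(x,y)$ are exactly the functionals vanishing at $M(y/x;C_X)x-y$, this point is indeed a positive multiple of $p$, and the equality chain correctly transfers optimality of any such $\phi^*$ to the pairs $(x,z)$ and $(z,y)$.

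The gap is the step ``choosing $\phi^*$ in the relative interior of $\Phi^*$ makes its contact set equal to the \emph{minimal} face of $\overline{C_X}$ containing the ray through $p$,'' followed by ``because $p$ is extreme, $E$ is exactly the ray through $p$.'' For $\phi^*$ in the relative interior of $\Phi^*=\{\phi\in C_X^*\colon\langle\phi,p\rangle=0\}$, the contact set $\overline{C_X}\cap\ker\phi^*$ equals the intersection of all \emph{exposed} faces of $\overline{C_X}$ containing $p$, which contains the minimal face but can be strictly larger; extremality of $p$ does not collapse it to a ray, because an extreme point need not be exposed. Concretely, take $X$ to be a stadium (convex hull of two discs in the plane) and $p$ a point where a flat edge meets a circular arc: $p$ is extreme, but the unique supporting line at $p$ contains the entire edge, so your $E$ is the cone over that edge, and your argument only places $M(y/z)z-y$ on some ray of that edge, no longer pinning $z$ to $\ell$. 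Since the lemma is stated for arbitrary Hilbert geometries and the paper uses it in that generality (Proposition~\ref{prop:5.4} invokes it for arbitrary non-trivial Hilbert geometries), the exposed case is not enough --- although for polyhedral $X$ every extreme point is exposed, so your proof does cover the polyhedral applications. The repair stays within your framework: pass to the $2$-plane through $x,y,z$ (the induced metric is the Hilbert metric of the section, and $p$ remains extreme there); the common optimal functional's contact set is then a boundary segment of the section containing $p$, the endpoint $a_1$ of the chord through $x,z$ beyond $x$, and the endpoint $c$ of the chord through $z,y$ beyond $z$. Extremality means $p$ is an \emph{endpoint} of that segment, so $a_1$ and $c$ lie weakly on one side of $p$, and an elementary ordering argument shows the lines $xa_1$ and $yc$ can then meet only on $\ell$ --- which is essentially de la Harpe's configuration argument, and notably needs only the reverse-Funk equality. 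Your closing remark that the same computation at a non-extreme endpoint produces a two-parameter family of geodesics is likewise unsupported: your computation would equally ``predict'' non-uniqueness at the stadium corner, where uniqueness in fact holds.
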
 

The following elementary topological fact will be useful. 
\begin{lemma}\label{lem:5.3} 
Let $X\subseteq\mathbb{R}^n$ be an open bounded convex set.
If $U$  is a non-empty convex subset of $X$, and $U$ is closed in $X$ and
homeomorphic to $\mathbb{R}^m$, then $U$ is the intersection of $X$ with
an $m$-dimensional affine space. 
\end{lemma}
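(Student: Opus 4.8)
\section*{Proof proposal}

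The plan is to show that the affine hull $A=\mathrm{aff}(U)$ is the $m$-dimensional affine space we want, by proving two things: that $U$ is relatively open in $A$, and that $\dim A = m$ and $U=X\cap A$. Write $d=\dim A$. First I would recall the standard facts from convex geometry that $\mathrm{relint}(U)$ is nonempty, convex, relatively open in $A$, and homeomorphic to $\mathbb{R}^d$. Since $\mathrm{relint}(U)$ is then an open subset of $U\cong\mathbb{R}^m$ that is itself homeomorphic to $\mathbb{R}^d$, topological invariance of dimension forces $m=d$; in particular we may identify $A$ with $\mathbb{R}^m$.

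The central step, and the main obstacle, is to prove that $U$ is relatively open in $A$. Here I would invoke Brouwer's invariance of domain. Let $h\colon\mathbb{R}^m\to U$ be a homeomorphism, and regard it as a continuous injective map from the open set $\mathbb{R}^m$ into $A\cong\mathbb{R}^m$. By invariance of domain, the image $h(\mathbb{R}^m)=U$ is open in $A$. Thus $U$ is a relatively open convex subset of $A$. (This is exactly the point at which the hypothesis that $U$ is homeomorphic to $\mathbb{R}^m$, rather than to some convex set with relative boundary, is used: a convex set with a relative boundary point would fail to be relatively open and could not be a manifold without boundary.)

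It remains to identify $U$ with $X\cap A$. The inclusion $U\subseteq X\cap A$ is immediate since $U\subseteq X$ and $U\subseteq A$. For the reverse inclusion I would use a connectedness argument inside $X\cap A$. Because $X$ is open in $\mathbb{R}^n$ and convex, $X\cap A$ is open in $A$ and convex, hence connected, and it is nonempty as it contains $U$. Now $U$ is open in $A$, so it is open in $X\cap A$; and $U$ is closed in $X$, so $U=U\cap(X\cap A)$ is closed in $X\cap A$. Thus $U$ is a nonempty subset of the connected space $X\cap A$ that is both open and closed, whence $U=X\cap A$. Since $\dim A=m$, this exhibits $U$ as the intersection of $X$ with an $m$-dimensional affine space, completing the proof.
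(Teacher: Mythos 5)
Your proposal is correct and follows essentially the same route as the paper's proof: take $A=\mathrm{aff}\,U$, show $U$ is relatively open in $A$, note $X\cap A$ is open in $A$ and contains $U$, and conclude $U=X\cap A$ since $U$ is non-empty, open and closed in the connected set $X\cap A$. The only difference is that you carefully justify the two steps the paper dismisses as clear (that $\dim A=m$ and that $U$ is open in $A$) via invariance of dimension and Brouwer's invariance of domain, which is a welcome sharpening rather than a different argument.
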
 
\begin{proof}
Let $A=\mathrm{aff}\, U$. Clearly $A$ is $m$-dimensional.
Since $U$ is convex and homeomorphic to $\mathbb{R}^m$, it must be open in $A$.
Remark that $X\cap A$ is also open in $A$ and contains $U$.
Therefore $U$ is open in $X\cap A$. But by assumption $U$ is also closed in
$X\cap A$, and so $U=X\cap A$, since $U$ is non-empty and
connected.  
\end{proof} 

We say a Hilbert geometry $(X,d_X)$ is~\emph{trivial} if $X$ consists of
a single point. 
\begin{proposition}\label{prop:5.4} 
Let $(Y,d_Y)$ and $(Z,d_Z)$ be non-trivial Hilbert geometries and suppose that $Y\times Z$ is equipped with the metric, 
\[
d_{Y\times Z} ((y,z),(y',z'))= d_Y(y,y')+d_Z(z,z')
\mbox{\quad for $y,y'\in Y$ and $z,z'\in Z$.}\] 
Then $(Y\times Z,d_{Y\times Z})$ is not isometric to any  Hilbert geometry. 
\end{proposition}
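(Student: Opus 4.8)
The plan is to suppose, for contradiction, that $\Psi\colon (Y\times Z, d_{Y\times Z})\to (X,d_X)$ is an isometry onto a Hilbert geometry $X\subseteq\mathbb{R}^{N}$, to carve out inside $Y\times Z$ a two-dimensional flat isometric to the $\ell^1$-plane, and then to show that no two-dimensional Hilbert geometry can be isometric to $(\mathbb{R}^2,\ell^1)$. The engine for the first part is a splitting observation for geodesics of the sum metric: if $\Gamma=(\alpha,\beta)\colon I\to Y\times Z$ is a geodesic segment, then comparing $d_{Y\times Z}$ on the three subsegments determined by $s<u<t$ and using the triangle inequality in each factor forces $d_Y(\alpha(s),\alpha(u))+d_Y(\alpha(u),\alpha(t))=d_Y(\alpha(s),\alpha(t))$, and likewise for $\beta$. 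Thus each coordinate traverses a geodesic of its factor monotonically. Since $Y$ and $Z$ are non-trivial, Lemma~\ref{lem:5.2} together with the existence of extreme points of $\overline{Y}$ and $\overline{Z}$ provides unique-geodesic lines $L_Y\subseteq Y$ and $M_Z\subseteq Z$ (take the line joining an interior point to an extreme boundary point). Set $F=L_Y\times M_Z$; parametrising $L_Y$ and $M_Z$ by arclength identifies $(F,d_{Y\times Z}|_F)$ isometrically with $(\mathbb{R}^2,\ell^1)$.

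First I would check that $F$ is geodesically closed in $Y\times Z$. If the endpoints of a geodesic segment lie in $F$, the splitting observation shows its $Y$-coordinate is a geodesic between two points of the unique-geodesic line $L_Y$; by uniqueness this geodesic is the segment of $L_Y$ joining them, so the $Y$-coordinate stays in $L_Y$, and similarly the $Z$-coordinate stays in $M_Z$. Hence the whole segment lies in $F$. Because $L_Y$ and $M_Z$ are closed in $Y$ and $Z$ respectively, $F$ is closed in $Y\times Z$, and topologically $F\cong\mathbb{R}^2$. Transporting by the isometry, $\Psi(F)$ is geodesically closed, closed in $X$, and homeomorphic to $\mathbb{R}^2$; since straight segments are geodesics in a Hilbert geometry, geodesic-closedness makes $\Psi(F)$ convex. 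Lemma~\ref{lem:5.3} then yields a two-dimensional affine plane $P$ with $\Psi(F)=X\cap P$, and since the line through any two points of $X\cap P$ exits $X$ exactly where it exits $X\cap P$, the restriction of $d_X$ to $X\cap P$ is again a (two-dimensional) Hilbert geometry. We have thus produced a planar Hilbert geometry $(X\cap P, d_X)$ isometric to $(\mathbb{R}^2,\ell^1)$.

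The contradiction is then obtained by counting unique-geodesic lines through a point, an invariant preserved by any isometry. In $(\mathbb{R}^2,\ell^1)$ the splitting observation applied in the plane shows that a unique-geodesic line must have one coordinate constant, so the unique-geodesic lines are exactly the axis-parallel straight lines and there are exactly two through every point. On the other hand, by Lemma~\ref{lem:5.2} the unique-geodesic lines of the planar Hilbert geometry on $X_0:=X\cap P$ are the straight lines meeting $\partial X_0$ at an extreme point. If $\overline{X_0}$ has infinitely many extreme points, then from any interior point there are infinitely many such lines, since each line through the point carries at most two boundary points; whereas if $\overline{X_0}$ has finitely many extreme points it is a convex polygon with at least three vertices, and a generic interior point, avoiding the finitely many lines through pairs of vertices, lies on at least three unique-geodesic lines. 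Either way some point of $X_0$ lies on more than two unique-geodesic lines, contradicting the count forced by the isometry with $(\mathbb{R}^2,\ell^1)$.

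The main obstacle is this final dichotomy: one must argue carefully that ``exactly two unique-geodesic lines through every point'' is incompatible with the extreme-point structure of any bounded planar convex body, treating both the polygonal case and the case of infinitely many extreme points. The reduction to two dimensions, via the flat $F$ and Lemma~\ref{lem:5.3}, is precisely what makes this clean count available and keeps the argument from having to analyse unique-geodesic lines in the full $N$-dimensional product.
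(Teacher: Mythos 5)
Your proof is correct, and it follows the paper's route exactly up to the last step: the same splitting observation for geodesics of the sum metric, the same choice of unique-geodesic lines $L_Y$ and $M_Z$ via Lemma~\ref{lem:5.2}, the same verification that $L_Y\times M_Z$ is closed, geodesically closed and hence has convex image, and the same use of Lemma~\ref{lem:5.3} to produce a planar Hilbert geometry isometric to $(\mathbb{R}^2,\ell^1)$. Where you diverge is the endgame. The paper invokes the theorem of Foertsch and Karlsson \cite{FK} that the only Hilbert geometry isometric to a two-dimensional normed space is that of the $2$-simplex, and then notes that the simplex norm has a hexagonal unit ball, which cannot be the square $\ell^1$ ball. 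You instead give a self-contained counting argument: in $(\mathbb{R}^2,\ell^1)$ any two points differing in both coordinates are joined by several geodesics (straight and staircase), so unique-geodesic lines are exactly the axis-parallel lines and there are precisely two through every point; whereas in any planar Hilbert geometry, Lemma~\ref{lem:5.2} plus the extreme-point structure of $\overline{X_0}$ yields a point lying on at least three unique-geodesic lines. Your dichotomy there is sound and the genericity in the polygonal case is genuinely needed (an unluckily placed point could see three vertices on only two lines), while the infinite case needs none since a line through an interior point meets $\partial X_0$ in only two points. What each approach buys: the paper's citation of \cite{FK} makes the proof shorter and rests on a known classification; your version is more elementary and removes the dependence on \cite{FK} entirely, at the cost of the one-paragraph case analysis. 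Both are complete proofs of Proposition~\ref{prop:5.4}.
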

\begin{proof}
Let $\ell_Y\subseteq Y$ be a geodesic line such that one of its end-points is an extreme point of $Y$. Likewise let $\ell_Z\subseteq Z$ be a geodesic line with one of its end-points an extreme point of $Z$. Note that by Lemma \ref{lem:5.2} both $\ell_Y$ and $\ell_Z$ 
are unique-geodesic lines. Obviously, $\ell_Y\times \ell_Z$ is homeomorphic to $\mathbb{R}^2$ and closed in $(Y\times Z,d_{Y\times Z})$. We now show that $\ell_Y\times \ell_Z$ is also geodesically closed. Let $(y,z)$ and $(y',z')$ be points in $\ell_Y\times \ell_Z$ and let $\Gamma$ be a geodesic segment in $Y\times Z$ connecting them. By definition of the metric $d_{Y\times Z}$, the projection $\Gamma_Y$ of $\Gamma$ to $Y$ is a geodesic segment connecting $y$ and $y'$ in $Y$.
As $\ell_Y$ is a unique-geodesic line, the only geodesic segment connecting
$y$ to $y'$ in $Y$ is the straight-line segment $[y,y']$.
Therefore, $\Gamma_Y\subseteq \ell_Y$. By the same argument
$\Gamma_Z\subseteq \ell_Z$.
We conclude that $\Gamma \subseteq \ell_Y\times \ell_Z$. 

For the sake of contradiction suppose that $h$ is an isometry mapping
$(Y\times Z,d_{Y\times Z})$ onto a Hilbert geometry $(X,d_X)$.
Then $U=h(\ell_Y\times \ell_Z)$ is homeomorphic to $\mathbb{R}^2$ and 
closed in $(X,d_X)$. Moreover, $U$ is geodesically closed and hence convex.
Thus, by Lemma \ref{lem:5.3}, $U$ is the intersection of $X$ with an
affine plane. This implies that it is itself a Hilbert geometry.
Note that $(\ell_Y\times \ell_Z,d_{Y\times Z})$ is isometric to
$\mathbb{R}^2$ with the $\ell_1$-norm, $\|(x_1,x_2)\|_1=|x_1|+|x_2|$
for $(x_1,x_2)\in\mathbb{R}^2$.
According to Foertsch and Karlsson \cite{FK}, the only Hilbert geometry
isometric to a 2-dimensional normed space is the  Hilbert geometry on a
$2$-simplex. In that case, however, the unit ball of the norm is hexagonal,
and hence it cannot be isometric to the $\ell_1$-norm on $\mathbb{R}^2$.
This is the desired contradiction. 
\end{proof}

\begin{corollary}\label{cor:5.5} 
If $(X,d_X)$ and $(Y,d_Y)$ are polyhedral Hilbert geometries and $g\colon X\to Y$ is an isometry, then $g$ maps the collection of vertex parts and facet parts of the horoboundary of $(X,d_X)$ to the 
collection of vertex parts and facet parts of the horoboundary of  $(Y,d_Y)$.
\end{corollary}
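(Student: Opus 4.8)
The plan is to combine Lemma~\ref{lem:5.1} with Proposition~\ref{prop:5.4} in order to single out the vertex and facet parts among all the maximal-dimension parts by an intrinsic, isometry-invariant property.

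First I would invoke Lemma~\ref{lem:5.1} to reduce to the parts of maximal dimension. That lemma guarantees that $g$ sends every part corresponding to a pair $(F,\tau(C_X,z))$, with $F$ a relatively open face of $C_X$ and $z\in F$, to a part corresponding to a pair $(F',\tau(C_Y,z'))$ of the same type; as noted in its proof, these are precisely the parts of maximal dimension $n-1$. Since both vertex parts and facet parts are of this form, it remains only to distinguish them, within this class, from the parts $(F,\tau(C_X,z))$ with $1<\dim F<n$.

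Next I would examine the product structure furnished by Theorem~\ref{thm:4.7} and the discussion opening Section~\ref{sec:5}: the part attached to $(F,\tau(C_X,z))$ is isometric to $F\times U'$, where the first factor carries the Hilbert geometry on the face $F$, of dimension $\dim F-1$, and $U'=\tau(C_X,z)/[0]_{\tau(C_X,z)}$ carries a Hilbert geometry of dimension $n-\dim F$, using the identity $\dim[0]_{\tau(C_X,z)}=\dim F$ established in the proof of Lemma~\ref{lem:5.1}. For a vertex part $\dim F=1$, so the first factor is trivial; for a facet part $\dim F=n$, so the second factor is trivial. In either case the part is isometric to a single non-trivial Hilbert geometry of dimension $n-1$. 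By contrast, when $1<\dim F<n$ both factors are non-trivial Hilbert geometries, and Proposition~\ref{prop:5.4} shows that the product is not isometric to any Hilbert geometry.

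Finally I would conclude using Lemma~\ref{lem:3.4}: since $g$ restricts to an isometry between each part of $(X,d_X)$ and its image in $(Y,d_Y)$, it preserves the property of being isometric to a single Hilbert geometry. Hence $g$ can neither carry a vertex or facet part to an intermediate part nor the reverse, and so it maps the collection of vertex and facet parts into the collection of vertex and facet parts. The main obstacle is the clean observation that vertex parts and facet parts are exactly the maximal-dimension parts having one trivial factor, which is precisely the isometry-invariant condition that Proposition~\ref{prop:5.4} detects; everything else is bookkeeping of the two factor dimensions.
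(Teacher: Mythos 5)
Your proposal is correct and takes essentially the same route as the paper's own proof: both reduce to the maximal-dimension pairs $(F,\tau(C_X,z))$ via Lemma~\ref{lem:5.1}, identify vertex and facet parts as exactly those parts isometric to an $(n-1)$-dimensional Hilbert geometry using the product structure from Theorem~\ref{thm:4.7}, and rule out the intermediate parts $1<\dim F<n$ by Proposition~\ref{prop:5.4}, concluding via the detour-metric invariance of Lemma~\ref{lem:3.4}. There are no substantive differences to report.
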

\begin{proof}
We may consider $X$ and $Y$ to be open subset of $\mathbb{R}^n$ for some
$n\geq 1$.
Let $P$ be a vertex part or facet part of the horoboundary of $(X,d_X)$.
According to Theorem \ref{thm:4.7}, $P$ is isometric to an $n-1$ dimensional
Hilbert geometry. Therefore the part $g(P)$ of the horoboundary of $(Y,d_Y)$ with the detour
metric must also be isometric to such a geometry. If $(F,U)$ is the pair
corresponding to the part $g(P)$, then by Lemma \ref{lem:5.1},
$F$ is a relatively open face of the cone $C_Y\subseteq \mathbb{R}^{n+1}$
generated by $Y$, and $U=\tau(C_Y,z)$ for some $z\in F$.
From Proposition~\ref{prop:5.4} and Theorem~\ref{thm:4.7},
it follows that either $F$ is the ray through a vertex of $Y$,
in which case $g(P)$ is a vertex part, or $F$ is a relatively open facet
of $C_Y$ and $\tau(C_Y,z)$ is a half-space, in which case $g(P)$ is a
facet part.      
\end{proof} 
We will now show that there are only two types of isometries between polyhedral
Hilbert geometries: namely, those that map vertex parts to vertex parts,
and facet parts to facet parts, and those that interchange them. 
\begin{theorem}\label{thm:5.6} 
If $(X,d_X)$ and $(Y,d_Y)$ are polyhedral Hilbert geometries and
$g\colon X\to Y$ is an isometry, then either $g$ maps vertex parts to
vertex parts, and facet parts to facet parts, or it interchanges them.
\end{theorem}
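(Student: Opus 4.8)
The plan is to extract from the detour cost $H$ a $g$-invariant binary relation between parts, use it to reconstruct the vertex--facet incidence graph of $X$ by purely metric means, and then appeal to the fact that a connected bipartite graph has an essentially unique bipartition.

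First I would record that $g$ preserves finiteness of the detour cost. By Lemma~\ref{lem:3.4} the quantity $H(g(\xi),g(\eta))$ differs from $H(\xi,\eta)$ by the finite real constants $\xi(g^{-1}(b'))$ and $\eta(g^{-1}(b'))$, so $H(\xi,\eta)<\infty$ if and only if $H(g(\xi),g(\eta))<\infty$. Moreover, combining Propositions~\ref{prop:4.5} and~\ref{prop:4.6} exactly as in the proof of Theorem~\ref{thm:4.7}, for Busemann points $g=r_{C,x}+f_{S,p\mid C}$ and $h=r_{C,y}+f_{T,q\mid C}$ the cost $H(g,h)$ is the sum of the reverse-Funk term, which is finite iff $y$ lies in the face of $x$, and the Funk term, which is finite iff $S\subseteq T$. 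Since both conditions depend only on the faces and cones involved, the relation ``$H(g,h)<\infty$'' descends to a well-defined, $g$-invariant relation on the \emph{parts} of the horoboundary.

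Next I would isolate the \emph{zero-dimensional} parts. By Theorem~\ref{thm:4.7} a part $(F,U)$ is a single point exactly when the Hilbert geometries on $F$ and on $U/[0]_U$ are both trivial, i.e.\ when $F$ is a vertex ray $F_z$ and $U$ is a half-space; since $U\in\mathcal{T}(\tau(C_X,z))$, these are precisely the pairs $(F_z,\tau(C_X,w))$ with $w$ in the relative interior of a facet containing $z$, that is, the vertex--facet incidences (flags) of $X$. As $g$ preserves the detour metric and the dimension of parts (Lemma~\ref{lem:5.1}), it maps zero-dimensional parts to zero-dimensional parts, and by Corollary~\ref{cor:5.5} it maps the maximal parts (vertex parts and facet parts) onto those of $Y$. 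Using the finiteness criterion above I would then check that for a flag $C=(F_z,H)$, where $H$ is the half-space of a facet $F$ with $z\in F$, the relation $H(\,\cdot\,,C)<\infty$ holds from exactly two maximal parts: the vertex part $V_z=(F_z,\tau(C_X,z))$ and the facet part $P_F=(F,H)$. Indeed $H(V_{z'},C)<\infty$ forces $F_{z'}=F_z$ and $\tau(C_X,z')\subseteq H$, hence $z'=z$, while $H(P_{F'},C)<\infty$ forces $z\in F'$ and $H_{F'}\subseteq H$, hence $F'=F$.

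I then define a graph $G_X$ whose vertices are the maximal parts of $X$, joining $A$ and $B$ by an edge whenever there is a zero-dimensional part $C$ with both $H(A,C)<\infty$ and $H(B,C)<\infty$. By the previous step $G_X$ is exactly the vertex--facet incidence graph of $X$: it is bipartite with classes the vertex parts and the facet parts, with $V_z$ adjacent to $P_F$ iff $z\in F$, and with no edges inside either class. Because $g$ preserves $H$-finiteness and carries maximal parts to maximal parts and flags to flags, it induces a graph isomorphism $G_X\to G_Y$. For $n\geq 2$ this incidence graph is connected, since the facet-adjacency graph is connected and two facets sharing a ridge share a vertex of $X$ (the ridge being a nonempty face), while every vertex lies on a facet; a connected bipartite graph has a unique bipartition up to interchange of its two classes, so $g$ either sends vertex parts to vertex parts and facet parts to facet parts, or interchanges the two kinds. (For $n=1$ a vertex ray of $C_X$ is itself a facet, so the two notions coincide and there is nothing to prove.)

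The main obstacle I anticipate is the bookkeeping in the middle two steps: verifying that the $H$-finiteness relation depends only on the parts, that the zero-dimensional parts are exactly the flags, and that among maximal parts this relation reproduces the incidence graph with no same-type edges. This all rests on a careful reading of the containment conditions ``$y$ in the face of $x$'' and ``$S\subseteq T$'' from Propositions~\ref{prop:4.5} and~\ref{prop:4.6}, together with the description of $\mathcal{T}(\tau(C_X,z))$ for polyhedral cones as intersections of facet half-spaces. Once these identifications are in place, the connectivity of the incidence graph and the uniqueness of its bipartition make the conclusion routine.
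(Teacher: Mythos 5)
Your argument is sound in substance and takes a genuinely different route from the paper's. The paper, after the same reduction via Corollary~\ref{cor:5.5}, argues geometrically: assuming a facet part is sent to a vertex part while an adjacent facet part is sent to a facet part, it builds unique-geodesic lines toward the two boundary pieces, estimates the Gromov product along them, shows it blows up, and contradicts the Karlsson--Noskov bound \cite[Theorem 5.2]{KN} on Gromov products of boundary points whose connecting segment is not contained in $\partial X$; a separate unique-geodesic argument then rules out vertex parts going to vertex parts. You instead exploit the \emph{asymmetric} detour cost: the one-sided finiteness criterion ($y$ in the face of $x$, and $S\subseteq T$) is indeed available from Propositions~\ref{prop:4.5} and~\ref{prop:4.6} together with Lemmas~\ref{lem:3.2} and~\ref{lem:4.7.2} and the almost-geodesic of Theorem~\ref{thm:4.4}, even though Theorem~\ref{thm:4.7} only records the symmetrised metric; it descends to parts (via the triangle inequality for $H$ from the proof of Proposition~\ref{prop:3.3}, or directly from the criterion) and is $g$-invariant by the computation in Lemma~\ref{lem:3.4}. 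Your identification of the singleton parts with vertex--facet flags is correct (half-spaces in $\mathcal{T}(\tau(C_X,z))$ correspond exactly to facets through $z$, since facet functionals are extreme rays of $C_X^*$), and unique bipartition of the connected incidence graph then gives the dichotomy combinatorially, avoiding the Gromov-product estimates and \cite{KN} entirely, at the price of classical polytope facts (connectivity of the ridge graph). Your route even extracts more: the induced map preserves or transposes the vertex--facet incidence relation, which the paper's proof does not record.

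One slip needs repair. The parts of maximal dimension $n-1$ are \emph{not} only the vertex and facet parts: by Lemma~\ref{lem:5.1}, every pair $(F,\tau(C_X,z))$ with $F$ a relatively open face of \emph{any} dimension yields such a part. If your graph $G_X$ were defined on all maximal-dimensional parts, your claim that exactly two of them have finite detour cost to a given flag would fail: for a flag $(F_z,H_F)$, every part $(G,\tau(C_X,w))$ with $F_z\subseteq\overline{G}$ and $G\subseteq\overline{F}$ satisfies both conditions of the criterion, producing triangles such as $V_z$, the part of an edge of $F$ through $z$, and $P_F$, which destroys bipartiteness. The fix is precisely what you in fact invoke: take the vertex set of $G_X$ to be the vertex parts and facet parts only, a collection that $g$ preserves by Corollary~\ref{cor:5.5} --- noting that this is also where Proposition~\ref{prop:5.4} and the Foertsch--Karlsson result enter your argument, so you do not dispense with that machinery. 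With that restriction made explicit, your case check covers everything needed and the proof is complete; the $n=1$ degenerate case is, as you say, vacuous since the two kinds of part coincide there.
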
 
\begin{proof}
By Corollary \ref{cor:5.5}, it suffices to prove that if a facet part of the horoboundary of
$(X,d_X)$ is mapped to a vertex part of the horoboundary of $(Y,d_Y)$ under $g$, then every facet part gets maps to a vertex part and every vertex part gets mapped to a facet part.
So, suppose that $g$ maps the facet part corresponding to $(F,\tau(C_X,z))$,
with $z\in F$, to the vertex part $(F_v,\tau(C_Y,v))$, where $F_v$ is the ray
through the vertex $v\in \overline Y$. 
Now let $F'$ be a facet adjacent to $F$. For the sake of contradiction,
suppose that the facet part corresponding to the pair $(F',\tau(C_X,z'))$, with $z'\in F'$, is not mapped to a vertex part of the horoboundary of $(Y,d_Y)$. By Corollary \ref{cor:5.5}, its image must be a facet part of $(Y,d_Y)$. Let us denote the corresponding pair of this facet part by $(g(F'),\tau(C_Y,v'))$. 

Note that the vertex $v$ is adjacent to $g(F')$, as otherwise there would be
a unique-geodesic line $\Gamma$ connecting $v$ to a point in $g(F')$.
This would imply, however, that $g^{-1}(\Gamma)$ is a unique-geodesic
line connecting points in the facets $F$ and $F'$, which is impossible
by~\cite[Proposition 2]{dlH}. 

Now let $\gamma_1\colon \mathbb{R}\to X$ be a unique-geodesic line such that  $\lim_{t\to\infty}\gamma_1(t)\in g(F')$. There exists a unique-geodesic line $\gamma_2\colon \mathbb{R}\to Y$ such that $\lim_{s\to\infty}\gamma_2(s)=v$ and $\gamma_2(0)=\gamma_1(0)$. Put $r=\gamma_1(0)$, and remark that $\mathrm{aff}\,(\gamma_1,\gamma_2)$ is 2-dimensional. 

Let
\begin{equation*}
(x\mid y)_r = \frac{1}{2}\Big(d(x,r)+d(y,r)-d(x,y)\Big)
\end{equation*}
denote the Gromov product with base-point $r$. 
For $i=1,2$, let $\gamma_i(\pm\infty)$ denote the limits as $s,t\to\pm\infty$,
respectively. In particular, $\gamma_2(\infty)=v$. For each $m>0$,
there exist $s_m$ and $t_m$ greater than $m$ such that the straight-line
through $\gamma_1(t_m)$ and $\gamma_2(s_m)$ is parallel to the straight-line
through $\gamma_1(\infty)$ and $v$. 
Note that there exists a constant $C_1$ such that the following inequality holds. 
\begin{equation}\label{eq:5.1}
\begin{split}
d_X(\gamma_2(s_m),\gamma_2(0)) & = \log \Big{(}\frac{|v\gamma_2(0)|}{|v\gamma_2(s_m)|}\frac{|\gamma_2(s_m)\gamma_2(-\infty)|}{|\gamma_2(0)\gamma_2(-\infty)|}
\Big{)}\\
   & \geq \log |v\gamma_2(0)|-\log|v\gamma_2(s_m)|\\
   &\geq C_1-\log|v\gamma_2(s_m)|
\end{split}
\end{equation}
for all $m>0$. There also exists a constant $C_2$ such that 
\begin{equation}\label{eq:5.2}
\begin{split}
d_X(\gamma_1(t_m),\gamma_2(s_m)) & = \log [u'_m,\gamma_1(t_m),\gamma_2(s_m),v'_m]\\
& = \log \frac{|u'_m\gamma_2(s_m)|}{|u'_m\gamma_1(t_m)|}
+ \log |\gamma_1(t_m)v'_m| - \log |\gamma_2(s_m)v'_m|\\
   &\leq C_2-\log|\gamma_2(s_m)v'_m|.
\end{split}
\end{equation}
for all $m>0$. 
Substituting (\ref{eq:5.1}) and (\ref{eq:5.2}) into the Gromov product gives 
\[
\limsup_{m\to\infty} 2(\gamma_1(t_m)\mid\gamma_2(s_m))_r\geq \limsup_{m\to\infty} 
d_X(\gamma_1(t_m),\gamma_1(0))+\log \frac{|\gamma_2(s_m)v'_m|}{|\gamma_2(s_m)v|}+C_3,
\]
for some constant $C_3$. By construction 
\[
\frac{|\gamma_2(s_m)v'_m|}{|\gamma_2(s_m)v|}
\]
is constant for large $m$. Since $d_X(\gamma_1(t_m),\gamma_1(0))\to\infty$ as $m$ tends to $\infty$, we find that 
\[
\limsup_{m\to\infty} 2(\gamma_1(t_m)\mid\gamma_2(s_m))_r = \infty.
\]

Note that  $g^{-1}$ is an isometry that maps $Y$ onto $X$ and 
\[
(g^{-1}(\gamma_1(t))\mid g^{-1}(\gamma_2(s)))_{g^{-1}(r)} =
(\gamma_1(t)\mid\gamma_2(s))_r 
\] 
for each $s$ and $t$. Thus, 
\begin{equation}\label{eq:5.3}
\limsup_{m\to\infty} 2(g^{-1}(\gamma_1(t_m)\mid g^{-1}(\gamma_2(s_m)))_{g^{-1}(r)} =\infty.
\end{equation}
As $g$ maps the facet part $(F', \tau(C_X,z'))$ to the facet part
$(g(F'),\tau(C_Y,v'))$ and the facet part $(F,\tau(C_X,z))$ to the
vertex part $(F_v,\tau(C_Y,v))$, it follows from Lemmas~\ref{lem:4.1}
and~\ref{lem:5.2} that $g^{-1}(\gamma_1(t_m))$ converges to $x\in F'$
and $g^{-1}(\gamma_2(s_m))$ converges to $y\in F$ as $m$ tends to $\infty$.
As the straight-line segment $[x,y]\not\subseteq \partial X$,
we deduce from \cite[Theorem 5.2]{KN} that 
\[
\limsup_{m\to\infty} 2(g^{-1}(\gamma_1(t_m)) \mid g^{-1}(\gamma_2(s_m)))_{g^{-1}(r)} <\infty, 
\]
which contradicts (\ref{eq:5.3}). 

We can reason in the same way from $F'$, and conclude that $g$ maps each facet part to a vertex part. It remains to show that $g$ maps vertex parts to facet parts. Again we argue by contradiction. So, let $P$ be a vertex part of $(X,d_X)$  corresponding to  
$(F_v,\tau(C_X,v))$, and suppose hat $g$ maps $P$ to a vertex part $(F'_u,\tau(C_Y,u))$ of $(Y,d_Y)$. There exists a unique-geodesic line $\Gamma_{p}\subseteq X$ connecting $v$ to a point  
$p\in F$, where $F$ is a facet of $C_X$ whose closure does not contain $v$. We already know that the facet part $(F,\tau(C_X,p))$ of $(X,d_X)$ is mapped to a vertex part $(F'_w,\tau(C_Y,w))$ of $(Y,d_Y)$. The image of $\Gamma_{p}$ under $g$ is  a unique-geodesic line, $\Gamma'_{p}$, which connects $u$ to $w$ in $(Y,d_Y)$ by Lemmas \ref{lem:4.1} and \ref{lem:5.2}. This implies that $u$ and $w$ do not lie in the same closed facet of $Y$, and hence $\Gamma'_{p}$ must be the straight-line segment $(u,w)$ in $Y$ for each $p\in F$, which contradicts the fact that  $g$ is 
one-to-one.  
\end{proof}

We shall prove that every isometry between polyhedral Hilbert geometries
that maps vertex parts to vertex parts, and hence facet parts to facet parts,
is a collineation. In addition, we shall see that isometries that interchange
vertex parts and facet parts only exist between two $n$-simplices with
$n\geq 2$.

\section{Isometries that map vertex parts to vertex parts} 

We first show that if an isometry between polyhedral Hilbert geometries maps
vertex parts to vertex parts, then it admits a continuous extension to the
norm boundary of its domain.
\begin{lemma}\label{lem:6.1} 
Let  $(X,d_X)$ and $(Y,d_Y)$ be polyhedral Hilbert geometries and 
$g\colon X\to Y$ be an isometry. If $g$ maps vertex parts to vertex parts, 
then $g$ extends continuously to $\partial X$. 
\end{lemma}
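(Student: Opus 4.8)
The plan is to reduce the statement to the following pointwise claim: for every $z\in\partial X$ and every sequence $(x_k)_k$ in $X$ with $x_k\to z$ in the norm topology, the images $g(x_k)$ converge in $\overline{Y}$ to a limit depending only on $z$. Setting $\bar g(z)$ equal to this limit then yields a map $\bar g\colon\overline X\to\overline Y$ extending $g$, and a routine diagonal argument on the compact set $\overline X$ promotes the pointwise limits to continuity. After a projective change of coordinates I may assume $X$ and $Y$ are bounded. By Theorem~\ref{thm:5.6} and Corollary~\ref{cor:5.5}, the hypothesis that $g$ preserves vertex parts forces it to preserve facet parts as well, and the unique-geodesic-line arguments from the proof of Theorem~\ref{thm:5.6} (together with \cite[Proposition~2]{dlH}) show that $g$ respects the incidence of vertices and facets. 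Hence $g$ induces a dimension- and incidence-preserving isomorphism $F\mapsto F^g$ of the face lattices; in particular it yields a bijection $\beta$ between the vertices of $X$ and those of $Y$.

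First I would produce candidate boundary values. Fix a boundary point $z$ lying in the relative interior of a face $F$, and consider a straight-line geodesic $\gamma(t)=(1-t)z+ty$ with $y\in X$. By Lemma~\ref{lem:4.1}, $\gamma$ converges, as $t\to 0$, to a Busemann point in the part attached to $(F,\tau(C_X,z))$, so its image $g\circ\gamma$ is a geodesic in $Y$ converging to a Busemann point in the corresponding part of $Y$. Under Theorem~\ref{thm:4.7} the part attached to $(F,\tau(C_X,z))$ is isometric to the product of the Hilbert geometry on the face $F$ with a lower-dimensional factor, and $g$ carries it isometrically (Lemma~\ref{lem:3.4}) to the part of $Y$ attached to $(F^g,\tau(C_Y,z'))$; this singles out, for each face, an isometry $\Psi_F\colon F\to F^g$ of the face Hilbert geometries. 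Reading off the reverse-Funk component of the image Busemann point as $r_{C_Y,\Psi_F(z)}$ and invoking Proposition~\ref{prop:4.2} (norm convergence $=$ reverse-Funk convergence), I get $g(\gamma(t))\to\Psi_F(z)$ in norm. Thus the $\Psi_F$ are the candidate values of $\bar g$, realised as norm limits of straight geodesics running into the boundary; for a vertex $v$ this reads $g(\gamma(t))\to\beta(v)$.

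Next I would upgrade these candidates to the genuine norm limit of $g(x_k)$ for an \emph{arbitrary} sequence $x_k\to z$. By compactness pass to a subsequence with $g(x_k)\to w\in\overline Y$. Comparing $(x_k)_k$ with a straight geodesic running into $z$ (whose image tends to $\Psi_F(z)$ by the previous step), and using that the Gromov product of two sequences tending to the same boundary point diverges while the Gromov product is invariant under the isometry $g$, I would invoke \cite[Theorem~5.2]{KN} to conclude that the segment joining $w$ to $\Psi_F(z)$ must lie in $\partial Y$; that is, $w$ and $\Psi_F(z)$ lie in a common face. To turn this into the equality $w=\Psi_F(z)$, I would argue by induction on $\dim X$: the restriction of $g$ to a facet part realises an isometry of the $(n-1)$-dimensional facet Hilbert geometry that again maps vertex parts to vertex parts, so the inductive hypothesis extends it continuously to the closure of the facet, and matching this extension with the reverse-Funk component of $w$ forces $w=\Psi_F(z)$. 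Checking that the extensions on adjacent faces agree and fit continuously with the interior then completes the construction.

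The main obstacle is exactly this last passage from ``same face'' to ``same point''. Gromov products and \cite[Theorem~5.2]{KN} only locate the subsequential limit $w$ up to the face it lies in, so nothing so far prevents a badly chosen sequence tending to a vertex from having an image tending to the interior of a higher-dimensional face. Ruling this out requires genuinely linking the detour-metric (horofunction) data encoded in $\Psi_F$ to honest norm convergence through the reverse-Funk component of the horofunction limit of $(x_k)_k$ — a limit that need not be a Busemann point, so that the hypothesis on vertex parts does not apply to it directly. Establishing this correspondence, in tandem with the factor-preservation of isometries between the product parts of Theorem~\ref{thm:4.7} (a rigidity in the spirit of Proposition~\ref{prop:5.4}), is where the real work of the proof lies.
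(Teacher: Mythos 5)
Your proposal stalls at two places, and you have in fact flagged the more serious one yourself. First, the maps $\Psi_F$ are never constructed. Theorem~\ref{thm:4.7} identifies the part attached to $(F,\tau(C_X,z))$ with a metric product $(F\times U',d_F+d_{U'})$, but an isometry between two such products need not split as a product of isometries of the factors; the factors here are lower-dimensional polyhedral Hilbert geometries (faces of simplices are even normed spaces), where factor-mixing isometries abound, and ruling such mixing in or out is exactly the kind of rigidity the paper has to fight for at the level of whole parts in Theorem~\ref{thm:5.6}. Without the splitting, even the \emph{well-definedness} of your candidate boundary value fails: if you instead define $\Psi_F(z)$ as the reverse-Funk base point of the image of $r_{C_X,z}+f_{\tau(C_X,z),p}$ (legitimate, by uniqueness of the Busemann decomposition), this point a priori depends on $p$, since the images of $r_{C_X,z}+f_{\tau(C_X,z),p}$ and $r_{C_X,z}+f_{\tau(C_X,z),p'}$ lie in the same part but Theorem~\ref{thm:4.7} only bounds the detour distance between their reverse-Funk base points; it does not force them to coincide. (For a \emph{vertex} part the face factor is a single ray, so no splitting is needed and the correspondence $v\mapsto g(v)$ is automatic --- and this is the only case one actually needs.) Second, the arbitrary-sequence step: \cite[Theorem 5.2]{KN}, as used in this paper, gives only the implication ``limit segment meets the interior $\Rightarrow$ Gromov product bounded''; the divergence of the Gromov product for two sequences tending to the \emph{same} boundary point is an additional unproved claim, and even granting it you concede you only learn that the subsequential limit $w$ and $\Psi_F(z)$ share a face. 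Your proposed inductive repair is circular: to apply the inductive hypothesis to ``the restriction of $g$ to a facet'' you need $g$ to already induce a map between the closed facets --- which is precisely the continuous extension being proved; what $g$ actually restricts to is an isometry of a product part for the detour metric, not an isometry between the facets as Hilbert geometries.

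The paper's proof shows these obstacles are avoidable by never reducing to arbitrary sequences. It runs a \emph{downward} induction on the dimension of the boundary strata $X_m$ (the union of relatively open faces of dimension at least $m$), with the vertex-part hypothesis used exactly once, in the base case $m=n$: a segment $(v,x)$ emanating from a vertex is a unique geodesic by Lemma~\ref{lem:5.2}, so its image is a straight segment $(w,y)$, and Lemma~\ref{lem:4.1} together with uniqueness of the Busemann decomposition forces $w=g(v)$, the vertex of $Y$ determined by the image of the vertex part of $v$. The extension to an $(m-1)$-dimensional face $F$ is then \emph{defined} by pushing forward the segments $(v_F,x)$ from a single fixed vertex $v_F\notin\overline F$, and continuity is an elementary nested-segment argument: every limit point $y'$ of $(g(x_k))_k$ satisfies $z\in(g(v_F),y']$ for every $z\in(g(v_F),g(x))$, and letting $z$ approach $g(x)$ gives $y'=g(x)$. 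No Gromov products, no appeal to \cite{KN}, no horofunction limits of non-almost-geodesic sequences, and no product rigidity are required. Your instinct to anchor boundary values on straight-line geodesics via Lemma~\ref{lem:4.1} matches the paper's base case, but the passage through arbitrary sequences manufactures exactly the difficulties you identify at the end; the stratified induction with a fixed projecting vertex is the missing idea.
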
 
\begin{proof} 
Let $n=\dim X=\dim Y$. For $m\leq n$, let $X_m$ be the union of the relative
open faces of $\overline{X}$ with dimension at least $m$.
In particular, $X_n=X$. We use an inductive argument with the following
hypothesis: the map $g$ extends continuously to $X_m$, and every straight-line
segment $(v,x)\subseteq X_m$ with $v$ a vertex of $X$ is mapped onto a
straight-line segment $(g(v),y)$ in $\overline{Y}$,
where $g(v)$ is the vertex of $Y$ corresponding to the part that is the image
under $g$ of the part of $v$.

To see that the assertion is true for $m=n$, remark that $(v,x)$ is (part of)
a unique-geodesic line, and hence $g((v,x))$ is a straight-line segment
$(w,y)$ with $w\in\partial Y$, by Lemma~\ref{lem:5.2}.
Let $(v_k)_k$ be a sequence in $(v,x)$ converging to $v$.
It follows from Lemma~\ref{lem:4.1} that $(v_k)_k$ converges in the
horofunction compactification to a Busemann point of the form
$r_{C_X,v} +f_{\tau(C_X,v),p}$ for some $p\in X$.
Thus, by assumption, $(g(v_k))_k\subseteq (w,y)$ must converge to
$r_{C_Y,g(v)}+f_{\tau(C_Y,g(v)),q}$ for some $q\in Y$.
As $g(v_k)$ converges to $w$, it follows that $g(v)=w$. 

Now suppose the assertion is true for some $m\in\{1,\dots,n\}$.
Let $F$ be a relative open face of $\overline{X}$ of dimension $m-1$.
Fix a vertex $v_F$ of $X$ not lying in $\overline F$.
For each $x\in F$, consider the straight-line segment $(v_F,x)$, which,
by our choice of $v_F$, is contained in $X_m$.
By the induction hypothesis, $g$ maps $(v_F,x)$ onto a straight-line segment
$(g(v_F),y)$. Define $g$ on $F$ by $g(x)=y$.

We claim that this extension of $g$ to $X_{m-1}$ is continuous.
Let $(x_k)_{k}$ be a sequence of points in $X_{m-1}$ converging to
some point $x$ of $F$. Without loss of generality we may assume that $(x_k)_k$
lies within $F\cup X_m$.
Any point $z\in (g(v_F), g(x))$ is the image under $g$ of a point $u\in(v_F,x)$.
Moreover, we can find a sequence $(u_k)_k$ in $X_m$ converging to $u$
such that $u_k \in (v_F, x_k)$ for all $k$.
By one part of the induction hypothesis, $g(u_k) \in (g(v_F), g(x_k))$ for
all $k$.
By the other part, $g(u_k)$ converges to $z= g(u)$, since $u$ is in $X_m$.
Therefore, every limit point $y'$ of $(g(x_k))_k$ satisfies $z\in(g(v_F), y']$.
By letting $z$ approach $g(x)$, we conclude that $y'=g(x)$,
and hence $g$ is continuous on $X_{m-1}$

To complete the induction step, let $(v,x)\subseteq X_{m-1}$ be a straight-line segment with $v$ a vertex of 
$X$. Suppose $s,t\in (v,x)$ and $s\in (v,t)$. Let  $(s_k)_k$ and $(t_k)_k$ be sequences in $X$ with $s_k\in (v,t_k)$ for all $k$, and such that $s_k\to s$ and $t_k\to t$ as $k\to\infty$. By the induction hypothesis, the straight-line segment $(v,t_k)$ is mapped onto 
$(g(v),g(t_k))$, so that $g(s_k)\in (g(v),g(t_k))$ for all $k$. As $g$ is continuous on $X_{m-1}$ we conclude that $g(s)\in [g(v),g(t)]$. Thus, the image of $(v,x)$ under $g$ is contained in a straight-line segment $(g(v),y)$ for some $y\in\overline{Y}$. Moreover, as $g$ is continuous, $g((v,x))$ must be connected, and hence it is a straight-line segment.  
\end{proof} 

We also need the following two lemmas. 
\begin{lemma} \label{lem:6.2} 
Let $U\subseteq\mathbb{R}^n$ be an $n$-dimensional compact convex set.
If $x_0,\ldots,x_n\in\partial U$ form an $n$-simplex, then for each $u\in U$,
there exists $x_m$ such that $\ell_{u,x_m}$ intersects
$\mathrm{aff}\,(\{x_0,\ldots,x_n\}\setminus\{x_m\})$ at a point in $U$.
\end{lemma}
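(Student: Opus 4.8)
The plan is to pass to barycentric coordinates with respect to the affinely independent vertices $x_0,\dots,x_n$. Since these span $\mathbb{R}^n$ affinely, every point $u\in\mathbb{R}^n$ has unique coordinates $(\lambda_0,\dots,\lambda_n)$ with $\sum_i\lambda_i=1$ and $u=\sum_i\lambda_i x_i$, and $u$ lies in the simplex $\Delta=\mathrm{conv}\{x_0,\dots,x_n\}$ exactly when all $\lambda_i\ge 0$. First I would record the elementary fact that $\Delta\subseteq U$: indeed $U$ is convex and each vertex $x_i\in\partial U\subseteq U$, so every convex combination of the $x_i$ lies in $U$. Writing $H_m=\mathrm{aff}(\{x_0,\dots,x_n\}\setminus\{x_m\})$, the goal is to produce, for each $u\in U$, an index $m$ for which the point $\ell_{u,x_m}\cap H_m$ exists and lies in $U$.

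Next I would compute this intersection point explicitly. For any $m$ with $\lambda_m\neq 1$, parametrise the line as $(1-t)u+t x_m$ and express it in barycentric coordinates; its $x_m$-coordinate equals $(1-t)\lambda_m+t$, which vanishes at $t_m=-\lambda_m/(1-\lambda_m)$. Substituting and using $1-t_m=1/(1-\lambda_m)$ gives the intersection point
\[
p_m \;=\; \sum_{i\neq m}\frac{\lambda_i}{1-\lambda_m}\,x_i \;\in\; H_m ,
\]
whose coordinates $\mu_i=\lambda_i/(1-\lambda_m)$ automatically sum to $1$ and are nonnegative precisely when every $\lambda_i$, $i\neq m$, has the same sign as $1-\lambda_m$. (When $\lambda_m=1$ the line $\ell_{u,x_m}$ is parallel to $H_m$ and the intersection is empty, so I must choose $m$ with $\lambda_m\neq 1$; the case analysis below always permits this.)

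The argument then splits according to whether $u\in\Delta$. If $u\notin\Delta$, some coordinate is negative, say $\lambda_m<0$; for this $m$ one checks that $t_m=-\lambda_m/(1-\lambda_m)\in(0,1)$, so $p_m$ lies on the open segment $(u,x_m)$ and hence in $U$ by convexity, since both $u$ and $x_m$ belong to $U$. If instead $u\in\Delta$, then all $\lambda_i\ge 0$; choosing any $m$ with $\lambda_m<1$ (which exists because the $n+1\ge 2$ nonnegative coordinates sum to $1$) makes $1-\lambda_m>0$, so the weights $\mu_i$ are nonnegative and $p_m$ is a convex combination of the facet vertices $\{x_i:i\neq m\}$, whence $p_m\in\Delta\subseteq U$. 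In either case the required index $m$ is found.

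The computations are routine; the only point requiring care --- and where convexity of the full set $U$ (rather than just of $\Delta$) is genuinely used --- is the case $u\in U\setminus\Delta$, where the intersection point is produced not inside the simplex but on the chord $(u,x_m)$. I would double-check the sign bookkeeping for $t_m$ there, together with the degenerate configurations $\lambda_m=1$ and $u=x_m$, to be sure the line meets $H_m$ in a genuine point of $U$.
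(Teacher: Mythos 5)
Your proof is correct and takes essentially the same approach as the paper's: both split on whether $u$ lies in the simplex $\mathrm{conv}\,(x_0,\ldots,x_n)$, locating the intersection point in the opposite facet when it does, and on the chord $[u,x_m]\subseteq U$ when it does not --- your condition $\lambda_m<0$ is exactly the barycentric reformulation of the paper's choice of a facet half-space $H_m$ not containing $u$. Your explicit computation of $t_m$ and your treatment of the degenerate cases $\lambda_m=1$ and $u=x_m$ are accurate, so the argument stands as written.
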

\begin{proof}
Write $S=\mathrm{conv}\,(x_0,\ldots,x_n)$ to denote the $n$-simplex,
and for $m=1,\ldots,n$ define
$A_m=\mathrm{aff}\,(\{x_0,\ldots,x_n\}\setminus\{x_m\})$.
If $u\in S$, then $\ell_{u,x_m}$ intersects $A_m$ at a point in
$\mathrm{conv}(\{x_0,\ldots,x_n\}\setminus\{x_m\})\subseteq U$.
On the other hand, if $u\not\in S$, then for each $k$ we let $H_k$ be the
closed half-space containing $x_k$ with boundary $A_k$.
Obviously, $S$ is the intersection of these halfspaces,
and so there exists $m\in\{0,\dots,n\}$ such that $u$ is not in $H_m$.
Since $x_m$ is in $H_m$ any $u$ is not, the intersection of $\ell_{u,x_m}$
and $A_m$ lies in $[u,x_m]$. But $[u,x_m]$ is a subset of $U$ since $U$ is
convex.
\end{proof} 

\begin{lemma}\label{lem:6.3} 
Let $(X,d_X)$ and $(Y,d_Y)$ be polyhedral Hilbert geometries and
$g\colon X\to Y$ be an isometry having a continuous extension to $\partial X$.
If $x_0,\ldots,x_m\in\partial X$ are vertices of $X$ and
$u\in \mathrm{aff}\,(x_0,\ldots,x_m)\cap \overline{X}$,
then $g(u)\in\mathrm{aff}\,(g(x_0),\ldots,g(x_m))$.
\end{lemma}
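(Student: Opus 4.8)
The plan is to argue by induction on the number $m+1$ of vertices. The base case $m=0$ is immediate: $\mathrm{aff}(x_0)\cap\overline{X}=\{x_0\}$ forces $u=x_0$, whence $g(u)=g(x_0)\in\mathrm{aff}(g(x_0))$. For the inductive step I would first reduce to the case where $x_0,\ldots,x_m$ are affinely independent. If they are not, one may pass to a proper affinely independent subset spanning the same affine space $\mathrm{aff}(x_0,\ldots,x_m)$; the inductive hypothesis applied to this strictly smaller collection (which still has $u$ in the intersection of its affine hull with $\overline{X}$) places $g(u)$ in the affine hull of the images of the subset, and this is contained in $\mathrm{aff}(g(x_0),\ldots,g(x_m))$. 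So from now on assume $x_0,\ldots,x_m$ form an $m$-simplex.

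Next set $U=\mathrm{aff}(x_0,\ldots,x_m)\cap\overline{X}$, a compact convex set of dimension $m$ on whose boundary the extreme points $x_0,\ldots,x_m$ form an $m$-simplex. Applying Lemma~\ref{lem:6.2} to $U$ and the point $u\in U$ produces an index $j$ such that the line $\ell_{u,x_j}$ through $u$ and $x_j$ meets $A_j:=\mathrm{aff}(\{x_0,\ldots,x_m\}\setminus\{x_j\})$ at a point $w\in U$. (If $u$ is itself a vertex the conclusion is immediate, so we may assume $u\neq x_j$ and that $\ell_{u,x_j}$ is well defined.) Since $w\in A_j\cap\overline{X}=\mathrm{aff}(\{x_i:i\neq j\})\cap\overline{X}$, the inductive hypothesis applied to the $m$ vertices $\{x_i:i\neq j\}$ gives $g(w)\in\mathrm{aff}(\{g(x_i):i\neq j\})$.

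To finish, I would invoke Lemma~\ref{lem:6.1} to transport collinearity through $g$. The points $u$, $w$, and the vertex $x_j$ all lie on the line $\ell_{u,x_j}$ and in $\overline{X}$, hence on the chord $[x_j,u']$ that this line cuts from $\overline{X}$; here $u'\in\partial X$ and $x_j$ is an endpoint of the chord, because $x_j$ is an extreme point. By Lemma~\ref{lem:6.1}, $g$, extended continuously to $\partial X$, maps this chord onto a straight-line segment emanating from $g(x_j)$, so that $g(x_j)$, $g(w)$, and $g(u)$ are collinear and $g(u)\in\mathrm{aff}(g(x_j),g(w))$. Combining with the previous step yields $g(u)\in\mathrm{aff}(g(x_j),g(w))\subseteq\mathrm{aff}(g(x_0),\ldots,g(x_m))$, which closes the induction.

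I expect the main difficulty to be two bookkeeping points rather than a deep obstacle. First, one must confirm that the point $w$ supplied by Lemma~\ref{lem:6.2} genuinely lies in both $\overline{X}$ and $A_j$, so that the smaller collection $\{x_i:i\neq j\}$ together with $w$ fits the inductive hypothesis exactly; this is where the precise statement of Lemma~\ref{lem:6.2} is used. Second, one must justify that $g$ preserves the collinearity of $x_j$, $w$, $u$ even though $w$ and $u$ may lie on the norm boundary of $X$. This is precisely the straight-segment-preservation property recorded in Lemma~\ref{lem:6.1}, so the remaining work is only to verify that its hypotheses apply along the chord from the vertex $x_j$.
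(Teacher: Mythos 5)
Your induction scheme---reduce to an affinely independent family, produce the auxiliary point $w$ via Lemma~\ref{lem:6.2}, apply the inductive hypothesis to $\{x_i\colon i\neq j\}$, then transport the collinearity of $x_j$, $w$, $u$ through $g$---is exactly the paper's argument. The gap is in the transport step, which is where the real content lies. You justify ``$g$ maps the chord $[x_j,u']$ onto a straight-line segment emanating from $g(x_j)$'' by citing Lemma~\ref{lem:6.1}, but that lemma does not state any segment-preservation property: its conclusion is only that $g$ extends continuously to $\partial X$, and its hypothesis (that $g$ maps vertex parts to vertex parts) is not among the hypotheses of Lemma~\ref{lem:6.3}, which assumes merely an isometry with a continuous boundary extension. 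The segment-preservation statement you invoke occurs only as an internal induction hypothesis \emph{inside} the proof of Lemma~\ref{lem:6.1}, established there under the vertex-part assumption; quoting it here is both a hypothesis mismatch and an appeal to an unstated result. Nor is the claim automatic from what you do have: when the chord from $x_j$ through $u$ lies entirely in $\partial X$---for instance $u$ and $w$ in the closure of a face of $X$ containing $x_j$, which certainly occurs since $u$ is an arbitrary point of $\mathrm{aff}\,(x_0,\ldots,x_m)\cap\overline{X}$---the open chord meets no point of $X$, Lemma~\ref{lem:5.2} says nothing about it, and a continuous extension of an isometry need not a priori map boundary segments to segments. Note also that in your scheme (base case $m=0$) the whole case $m=1$ rests on this step, so the gap is not confined to degenerate configurations.

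The repair is the paper's interior-approximation argument, which derives the transport from the stated hypotheses alone. Choose $v_i\in X$ with $v_i\to u$. Each line $\ell_{v_i,x_j}$ meets $X$ and hits $\partial X$ at the extreme point $x_j$, so $\ell_{v_i,x_j}\cap X$ is a unique-geodesic line by Lemma~\ref{lem:5.2}; hence $g(\ell_{v_i,x_j}\cap X)=\ell_{g(v_i),g(x_j)}\cap Y$, the endpoint being identified as $g(x_j)$ by the continuous extension. Since, by the proof of Lemma~\ref{lem:6.2}, one of the points $u$, $w$ lies on the segment from $x_j$ to the other, you may approximate the farther one by the $v_i$ and the nearer one by the corresponding convex combinations of $x_j$ and $v_i$, which remain in $\overline{X}$ and converge; continuity of the extension then gives, in the limit, that $g(x_j)$, $g(w)$, $g(u)$ are collinear, i.e.\ $g(u)\in\mathrm{aff}\,(g(x_j),g(w))$, which is exactly what your argument needs. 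With this substitution for the appeal to Lemma~\ref{lem:6.1}, your proof coincides with the paper's.
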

\begin{proof}
We use induction on $m$.
The case $m=1$ is a direct consequence of Lemma~\ref{lem:5.2}.

Now suppose that the assertion holds for all $m<k$.
Let $x_0,\ldots,x_k$ be vertices of $X$.
By removing points we may assume that $\mathrm{conv}\,(x_0,\ldots,x_k)$
is an $k$-simplex. By Lemma~\ref{lem:6.2}, there exists $k^*$ such that
$\ell_{u,x_{k^*}}$ intersects
$\mathrm{aff}\,(\{x_0,\ldots,x_k\}\setminus\{x_{k^*}\})$ at some point
$z$ in $\overline{X}$. By the induction hypothesis
$g(z)$ is in $\mathrm{aff}\,(\{g(x_0),\ldots,g(x_k)\}\setminus\{g(x_{k^*})\}$.
Let $(v_i)_i$ be a sequence in $X$ converging to $u$.
Since $g$ extends continuously to the boundary and
$g(\ell_{v_i,x_{k^*}}\cap X) = \ell_{g(v_i),g(x_{k^*})}\cap Y$,
we find that $g(z)\in \ell_{g(u),g(x_{k^*})}$.
Thus, $g(u)$ is an affine combination of $g(z)$ and $g(x_{k^*})$,
and hence contained in
$\mathrm{aff}\,(g(x_0),\ldots,g(x_k))$.  
\end{proof} 

The next theorem shows that every isometry between polyhedral Hilbert
geometries mapping vertex parts to vertex parts is a collineation. 
\begin{theorem}\label{thm:6.4} 
Let $(X,d_X)$ and $(Y,d_Y)$ be polyhedral Hilbert geometries and
$g\colon X\to Y$ be an isometry. If $g$ and $g^{-1}$ extend continuously
to the boundary, then $g$ is a collineation.  
\end{theorem}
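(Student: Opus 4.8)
The plan is to produce a projectivity $h\in\mathrm{PGL}(n,\mathbb{R})$ with $h|_{\overline X}=g$; since collineations preserve the Hilbert metric, this is exactly what it means for $g$ to be a collineation. We may assume $n=\dim X=\dim Y\ge 2$, the case $n=1$ being immediate. By hypothesis $g$ and $g^{-1}$ extend to a homeomorphism $g\colon\overline X\to\overline Y$. The first step is to check that $g$ carries the vertices of $X$ bijectively onto the vertices of $Y$. Indeed, a straight line meeting $\overline X$ at a vertex (an extreme point of $\overline X$) is a unique-geodesic line by Lemma~\ref{lem:5.2}, and $g$, being an isometry, sends unique-geodesic lines to unique-geodesic lines; hence it sends extreme points to extreme points. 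A vertex $v$ of $X$ is the common endpoint of several such lines, so by continuity $g(v)$ is the common endpoint of their images and is therefore a vertex of $Y$. The same argument applied to $g^{-1}$ shows the correspondence is a bijection.

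Next I would pin down $h$. Since $\overline X$ is an $n$-dimensional polytope, its vertices affinely span $\mathbb{R}^n$, so I can choose affinely independent vertices $v_0,\dots,v_n$ together with a point $p\in X$; these form a projective frame. By Lemma~\ref{lem:6.3} every point of $\overline X$ lies in $\mathrm{aff}(v_0,\dots,v_n)\cap\overline X$ and is thus mapped into $\mathrm{aff}(g(v_0),\dots,g(v_n))$; because $g$ is onto the $n$-dimensional set $\overline Y$, the images $g(v_0),\dots,g(v_n)$ must be affinely independent. Hence $g(v_0),\dots,g(v_n),g(p)$ is again a projective frame, and there is a unique $h\in\mathrm{PGL}(n,\mathbb{R})$ with $h(v_i)=g(v_i)$ and $h(p)=g(p)$. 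It remains to prove $g=h$ on all of $\overline X$. The engine for this is the interaction of Lemmas~\ref{lem:6.2} and~\ref{lem:6.3}: given $u$ in the affine hull of some of the $v_i$, Lemma~\ref{lem:6.2} supplies a vertex $v_m$ such that the line $\ell_{u,v_m}$ meets the affine hull of the remaining chosen vertices at a point $z\in\overline X$; by Lemma~\ref{lem:6.3} the image $g(z)$ lies in the corresponding hull of the $g(v_i)$, while $\ell_{u,v_m}$, passing through the vertex $v_m$, is a unique-geodesic line and is therefore sent by $g$ to a straight chord through $g(v_m)$ (Lemma~\ref{lem:5.2}). This lets me locate $g(u)$ on a definite line through $g(v_m)$.

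To upgrade this incidence information to the equality $g(u)=h(u)$, I would run an induction on the dimension of the vertex-hull containing $u$, descending via Lemma~\ref{lem:6.2}. At each stage the chord $\ell_{u,v_m}\cap\overline X$ carries a point $z$ at which $g=h$ is already known (the inductive hypothesis) and an ideal endpoint $v_m$ at which $g=h$; since $g$ restricted to this chord is an isometry of the one-dimensional Hilbert line induced on it, agreement at an interior point and at one endpoint forces agreement along the whole chord, whence $g(u)=h(u)$. Starting the induction requires an interior anchor, which is exactly the role of the frame point $p$ and the chords through it. Because $\mathrm{aff}(v_0,\dots,v_n)=\mathbb{R}^n$, the top of the induction yields $g=h$ throughout $\overline X$; applying the argument to $g^{-1}$ shows $h$ restricts to a bijection $\overline X\to\overline Y$, so $h(X)=Y$ and $g=h|_X$ is a collineation. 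Alternatively, once one knows that $g$ sends every straight segment of $\overline X$ to a straight segment, the fundamental theorem of projective geometry delivers $h$ directly, using that the only field automorphism of $\mathbb{R}$ is the identity, so that a collinearity-preserving bijection is genuinely projective. I expect the main obstacle to be precisely this passage from lines through vertices, which are controlled by Lemmas~\ref{lem:5.2} and~\ref{lem:6.3}, to arbitrary lines: keeping track of a suitable interior anchor along each descent, so as to fix not just the line containing $g(u)$ but the exact position of $g(u)$ on it, is the delicate bookkeeping, and it is there that continuity of the boundary extension from Lemma~\ref{lem:6.1} and the combinatorial input of Lemma~\ref{lem:6.2} must be combined with care.
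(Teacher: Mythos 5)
Your proposal follows the paper's skeleton (projective frame built from vertices plus an interior anchor, a unique candidate collineation $h$, Lemmas~\ref{lem:6.2} and~\ref{lem:6.3} as the engine), but the step you use to convert incidence data into the equality $g(u)=h(u)$ is genuinely false as stated. You claim that agreement at one interior point $z$ of a chord and at one ideal endpoint $v_m$ ``forces agreement along the whole chord'' because $g$ restricts to an isometry of the one-dimensional Hilbert geometry on the chord. The problem is with $h$, not $g$: at this stage $h$ is only a projectivity of the line through $u$ and $v_m$, and a projectivity of a line is determined by \emph{three} points, not two. It restricts to an isometry of the chord metric only if it maps \emph{both} endpoints of $\ell\cap\overline{X}$ to the endpoints of the image chord of $\overline{Y}$ --- and that $h(\overline X)=\overline Y$ is essentially what you are trying to prove. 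A projectivity fixing two points of a line can still be a nontrivial hyperbolic transformation of it, so $g$ and $h$ may agree at $v_m$ and $z$ and nowhere else on the chord. The same two-point deficiency undermines the base of your descent (agreement at the two ideal endpoints of a chord leaves a full one-parameter family of translations in the $\mathbb{R}$-coordinate), and also your fallback via the fundamental theorem of projective geometry: that route needs $g$ to carry \emph{every} segment to a segment, whereas Lemma~\ref{lem:5.2} only controls chords through extreme points, since general chords of a polyhedral domain are not unique geodesics. Your opening claim that $g$ maps vertices to vertices is likewise unsupported: the converse half of Lemma~\ref{lem:5.2} does not say the image chord ends at an extreme point, and being the common endpoint of many unique-geodesic chords does not characterise vertices (a relative-interior point of a facet is the endpoint of all chords joining it to vertices, which are unique geodesics).

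The paper closes exactly this gap with three devices your proposal lacks. First, interior points of the initial simplex $S$ are pinned down not by one-dimensional rigidity but as the intersection of \emph{two} chords, each through a distinct vertex and through a point where $g=h$ is already known: two distinct lines meet in a single point, so $g(p)=h(p)$ follows with no rigidity claim on either chord. Second, the agreement on the hyperplane sets $H_i$ is obtained by a genuine induction on the dimension of the geometry: by Lemma~\ref{lem:6.3}, $g$ restricted to the relative interior $X_i$ of $H_i$ is an isometry onto a lower-dimensional polyhedral Hilbert geometry $Y_i$, so the induction hypothesis makes $g|_{X_i}$ a collineation, and agreement with $h$ on a projective basis of the projective closure of $X_i$ then yields $g=h$ on all of $H_i$; your descent ``on the dimension of the vertex-hull containing $u$'' never invokes the theorem for these restricted isometries and so never obtains whole-section agreement. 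Third, your proposal stops at the frame simplex: knowing $g=h$ on $S$ and on affine hulls of frame vertices does not cover $\overline{X}$ when $X$ has more than $n+1$ vertices. The paper propagates via the basis-exchange property, replacing frame vertices one at a time by new vertices $u_i$ and working along lines through $u_i$ that cross the interior of $S$; such a line carries \emph{three} interior points of agreement, which (together with an auxiliary collineation that is an isometry on the chord) is enough to determine the map on the line, and Carath\'eodory's theorem then finishes. So while you correctly identified where the delicacy lies, the mechanism you propose to resolve it does not work, and the three substitute mechanisms above are precisely what the paper's proof supplies.
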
 
\begin{proof}
We will use induction on $n=\dim X=\dim Y$.
Assume that $X$ is 1-dimensional. 
Let $a$ and $b$ be the points of $\partial X$, and let $x$ be any point in $X$.
Then $a$, $b$, $x$ form a projective basis for $\mathbb{P}^2$.
Hence there exists a unique collineation $h$ that coincides with $g$
on $a$, $b$ and $x$. Let $y\in X$ be between $x$ and $b$.
As $g$ extends continuously to $\partial X$, we must have that $g(y)$ is
between $g(x)$ and $g(b)$. Since $h$ preserves cross-ratios,
\[
[g(a),g(x),g(y),g(b)]=[a,x,y,b]=[g(a),g(x),h(y),g(b)].
\]
This equality uniquely determines $h(y)$ and hence $g$ and $h$ agree at $y$.
By interchanging the roles of $a$ and $b$ we conclude that $g$ and $h$ agree
on $X$.

Now assume that the assertion is true for all $k<n$.
Then we can find $n+1$ vertices $x_0, \dots,x_n$ of $\overline{X}$ that form
an $n$-simplex, which we denote by $S$.
Choose a point $y$ in the interior of $S$. The points $x_0, \dots, x_n, y$
form a projective basis for $\mathbb{P}^n$.

Note that, since $y$ is not in
$\mathrm{aff}\,(\{x_0,\ldots,x_n\}\setminus\{x_m\})$ for any $m$,
we can apply Lemma \ref{lem:6.3} to $g^{-1}$ and conclude that $g(y)$ is not
in the affine hull of $\{g(x_0),\ldots,g(x_n)\}\setminus\{g(x_m)\}$ for any $m$.
A similar argument shows that $g(x_i)$ is not in the affine hull of
$\{g(x_0), \dots,g(x_n)\}\setminus\{g(x_i)\}$ for any $i$.
It follows that $g(x_0), \dots,g(x_k),g(y)$ form a projective basis
for $\mathbb{P}^n$.
Therefore, there is a unique collineation $h$ agreeing with $g$ at 
$x_0, \dots,x_n$ and $y$.

For each $i\in\{0,\dots,n\}$, define
\[
L_i = \overline{X} \cap \ell_{x_i,y}
\mbox{\quad and\quad }
H_i = \overline{X} \cap \mathrm{aff}\,(\{x_0, \dots,x_n\}\setminus\{x_i\}).
\]
Since $y$ is in the interior of $S$, we have that $L_i$ intersects $H_i$
at a single point $z_i$.
Note that $g$ maps $L_i$ to $L'_i = \overline{Y}\cap\ell_{g(y),g(x_i)}$. 
For $i=1,\ldots,n$, let 
\[
H'_i = \overline{Y}
     \cap \mathrm{aff}\,(\{g(x_0), \dots,g(x_n)\}\setminus\{g(x_i)\}).
\]
By applying Lemma \ref{lem:6.3} to both $g$ and $g^{-1}$ we also know that
$g(H_i)=H_i'$ for all $i$. 
Therefore, $g(z_i)$ is the unique point of intersection of $L'_i$ and $H_i'$.
The collineation $h$ also maps $L_i$ to $L'_i$ and $H_i$ to $H'_i$,
and therefore $g(z_i)=h(z_i)$.

Let $X_i$ and $Y_i$ denote the relative interiors of $H_i$ and $H'_i$,
respectively. Equipped with the restrictions of $d_X$ and $d_Y$ respectively,
these sets become Hilbert geometries. Moreover, by Lemma \ref{lem:6.3},
the map $g$ restricted to $X_i$ is an isometry of $X_i$ onto $Y_i$.
Of course, $g_{|X_i}$ extends continuously to $\partial X_i$
and its inverse extends continuously to $\partial Y_i$.
So, we may apply the induction hypothesis to deduce that $g_{|X_i}$
is a collineation.
As $g$ and $h$ agree on $\{x_0, \dots,x_n,z_i\}\setminus\{x_i\}$,
which forms a projective basis for the projective closure of $X_i$,
we have that $g$ and $h$ agree on $H_i$, for each $i$. 

Let $p$ be in the interior of $S$. Define $p_0=\ell_{p,x_0}\cap H_0$ and
$p_1=\ell_{p,x_1}\cap H_1$. Since $g$ and $h$ agree on both $x_0$ and $p_0$,
they both map $\ell_{p_0,x_0}\cap\overline{X}$ to
$\ell_{g(p_0),g(x_0)}\cap\overline{Y}$.
Similarly, they both map $\ell_{p_1,x_1}\cap\overline{X}$
to $\ell_{g(p_1,g(x_1)}\cap\overline{Y}$.
We conclude that $g(p)=h(p)$, and hence $g$ and $h$ agree on the whole of $S$.

Let $\{u_0,\dots,u_n\}$ be a set of $n+1$ vertices of $X$ such that
$S'= \mathrm{conv}\,(u_0,\dots,u_n)$ is an $n$-simplex.
By the basis exchange property for affine spaces,
there exists an $i$ such that $u_i, x_1,\dots,x_n$ form a $n$-simplex.
Let $q$ be in the interior of $\mathrm{conv}\,(u_i, x_1,\dots,x_n)$.
The straight line  $\ell_{q,u_i}$ intersects the relative interior of
the facet $\mathrm{conv}\,(x_1,\ldots,x_n)$ of $S$. Therefore $\ell_{q,u_i}$
also intersects the interior of $S$.
Thus, $g$ and $h$ agree on at least three distinct points $u$, $v$,
and $w$ of $\ell_{q,u_i} \cap X$. Let $a$ be the point different from $u_i$
where $\ell_{q,u_i}$ intersects $\partial X$.
There exists a unique collineation $f$ that agrees with $g$ on
$a$, $u$, and $u_i$. The map $f$ is an isometry on $(a,u_i)$ and hence  
$f$ and $g$ agree on $\ell_{q,u_i} \cap X$.
Since $u$, $v$, $w$ forms a projective basis for the $1$-dimensional
projective space containing $\ell_{q,u_i}$, we find that $f$ and $h$ agree on
$\ell_{q,u_i} \cap X$, and hence $g$ and $h$ also agree on $\ell_{q,u_i}\cap X$.
Thus, we have shown that $g$ and $h$ are identical on the interior of
$\mathrm{conv}\,(u_i, x_1,\dots,x_n)$.
In fact, as $g$ has a continuous extension to $\partial X$, the maps $g$ and 
$h$ agree on $\mathrm{conv}\,(u_i, x_1,\dots,x_n)$.

Now note that we can iterate this procedure and replace, one-by-one,
the elements of $\{x_0,\dots,x_n\}$ with elements of $\{u_0,\dots,u_k\}$
to deduce that $g$ and $h$ are identical on $S'$.
By Carath\'{e}odory's theorem, every point in $\overline{X}$ can be written
as a convex combination of $n+1$ vertices of $X$.
Therefore $g$ and $h$ agree on the whole of  $\overline{X}$,
which shows that $g$ is a collineation.
\end{proof}

\section{Isometries that interchange vertex and facet parts}
\label{sec:7}

\begin{theorem}\label{thm:7.1}
Let $(X,d_X)$ and $(Y,d_Y)$ be polyhedral Hilbert geometries with
$\dim X=\dim Y\geq 2$. If there exists an isometry $g\colon X\to Y$
that maps vertex parts to facet parts, then $X$ and $Y$ are $n$-simplices. 
\end{theorem}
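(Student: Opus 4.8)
The plan is to reduce the statement to a purely combinatorial condition on the face lattice of $X$, and then to prove that condition forces a simplex by an induction on dimension carried out in the dual polytope. First, by Theorem~\ref{thm:5.6}, an isometry that sends one vertex part to a facet part must send \emph{every} vertex part of $X$ to a facet part of $Y$ and every facet part of $X$ to a vertex part of $Y$; write $\phi$ for the resulting map from vertices of $X$ to facets of $Y$. The geometric heart of the argument is the following observation. Let $v$ be a vertex of $X$ and $p\in\partial X$ a boundary point with the open segment $(v,p)$ contained in the interior of $X$. Since $\ell_{v,p}$ meets $\partial X$ at the extreme point $v$, Lemma~\ref{lem:5.2} shows that $\Gamma=\ell_{v,p}\cap X$ is a unique-geodesic line; applying Lemma~\ref{lem:4.1} at each end, its two ends determine Busemann points lying, respectively, in the vertex part of $v$ and in the part $(G,\tau(C_X,p))$ determined by the face $G$ whose relative interior contains $p$. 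Now $g(\Gamma)$ is again a unique-geodesic line in $Y$, so by Lemma~\ref{lem:5.2} it must meet $\partial Y$ at an extreme point. The image of the $v$-end lies in the relative interior of the facet $\phi(v)$ of $Y$, hence is not a vertex; therefore the image of the $p$-end is a vertex of $Y$, which by Corollary~\ref{cor:5.5} forces the part at $p$ to be a facet part, that is, $G$ is a facet of $X$.

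I would then record this combinatorially. For a face $G$ of $X$ and a vertex $v$, let $v\vee G$ denote the smallest face of $X$ containing both. If $\dim G\le n-2$ and $v\vee G=X$, then for $p\in\mathrm{relint}\,G$ the segment $(v,p)$ lies in the interior of $X$ (the two points share no proper face), so the previous paragraph applies and $G$ would have to be a facet, a contradiction. Hence $v\vee G\ne X$ for every vertex $v$ and every face $G$ with $\dim G\le n-2$. Since every such $G$ is contained in a ridge (an $(n-2)$-face) $R=\overline{F_1}\cap\overline{F_2}$, and the only facets through $R$ are $F_1,F_2$, this is equivalent to the single condition
\[
V(X)\subseteq\overline{F_1}\cup\overline{F_2}\quad\text{for every ridge }R=\overline{F_1}\cap\overline{F_2},
\]
where $V(X)$ is the vertex set; that is, the two facets meeting along any ridge together contain all vertices of $X$.

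The remaining, and main, task is the combinatorial lemma: an $n$-polytope with $n\ge 2$ in which every ridge's two facets cover all the vertices is a simplex. I would prove this by passing to the dual polytope, where the condition becomes ``every edge meets every facet''. From this one deduces that each vertex lies on all but exactly one facet: the neighbours of a vertex $a$ lie on every facet avoiding $a$ and, together with $a$, affinely span $\mathbb{R}^n$, so any face containing all of them is a facet; thus two distinct facets avoiding $a$ would meet in a face of dimension $\le n-2$ still containing those neighbours, which is impossible, and of course at least one facet avoids $a$. This ``omits exactly one facet'' property is a bijection between vertices and facets, and it passes to every facet $P_0$ (a vertex $a$ of $P_0$ omits precisely the facet $P_0\cap\nu(a)$ of $P_0$, where $\nu(a)$ is the unique facet of the whole polytope omitting $a$, which must therefore be adjacent to $P_0$). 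An induction on dimension, with the polygon case forcing a triangle, then shows each facet is an $(n-1)$-simplex with $n$ vertices, so the polytope has $n+1$ vertices and is a simplex. Applying the entire argument to $g^{-1}\colon Y\to X$, which likewise maps vertex parts to facet parts, shows that $Y$ is a simplex as well.

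I expect the combinatorial lemma to be the main obstacle. The naive approach of restricting the ridge-covering condition directly to a facet fails precisely when that facet is not a simplex, so one is forced to reformulate the condition in the dual polytope as the cleaner ``each vertex omits exactly one facet'' before the induction can close. A secondary delicate point is the use of Lemma~\ref{lem:5.2} in the strong form that a unique-geodesic line must terminate at an extreme point of the boundary; this is exactly what forbids both ends of $g(\Gamma)$ from lying in relative interiors of facets and thereby drives the whole reduction.
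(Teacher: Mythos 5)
Your geometric heart rests on the ``strong form'' of Lemma~\ref{lem:5.2} that you yourself flag as delicate --- that a unique-geodesic line must terminate at an extreme point --- and this is false for polyhedral Hilbert geometries of dimension $\geq 3$. Concretely, take $X$ an open $n$-simplex with $n\geq 3$, isometric to $(V,\|\cdot\|_{\mathrm{var}})$ as in the proof of Theorem~\ref{thm:1.2}. For a proper subset $A\subseteq\{0,\dots,n\}$ with $2\leq |A|\leq n-1$, the class $[\chi_A]$ is a vertex of the unit ball $B_{\mathrm{var}}$, so the line $t\mapsto t[\chi_A]$ is a unique-geodesic line (in a normed space the segment from $0$ to $z$ is the unique geodesic exactly when $z/\|z\|$ is an extreme point of the ball). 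Transporting it to the simplex via $t\mapsto\exp(t\chi_A)$, its two endpoints are the barycentres of the complementary faces $\mathrm{conv}\{e_i\colon i\in A\}$ and $\mathrm{conv}\{e_i\colon i\notin A\}$: both lie in relative interiors of positive-dimensional faces, and neither is extreme. (Note the irony: the counterexample lives precisely in the simplices that the theorem identifies as the only possible $X$ and $Y$.) Consequently, from the fact that the $v$-end of $g(\Gamma)$ lies in the relative interior of the facet $\phi(v)$ you cannot conclude that the other end is a vertex of $Y$; it may lie in the relative interior of an intermediate-dimensional face, whose part, of the form $(F',\tau(C_Y,z'))$ as in Lemma~\ref{lem:5.1}, is neither a vertex part nor a facet part, and neither Corollary~\ref{cor:5.5} nor Theorem~\ref{thm:5.6} produces any contradiction from that. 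So your ridge-covering condition $V(X)\subseteq\overline{F_1}\cup\overline{F_2}$ is never established, and the dual-polytope induction --- which looks sound in itself --- has nothing to feed on.

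The paper avoids exactly this trap by exploiting a direction-sensitivity dichotomy from Lemma~\ref{lem:4.1} rather than any endpoint-extremality principle. It takes one point $p$ in the relative interior of a facet $F_v$ not containing the vertex $v$, and \emph{two} segments $(p,v)$ and $(p,u)$ to two distinct vertices $u\neq v$ off $\overline{F_v}$. At a facet point the tangent cone $\tau(C_X,p)$ is a half-space, so the Funk part $f_{\tau(C_X,p),q}$ is independent of $q$ and both segments converge at the $p$-end to the \emph{same} Busemann point; at a vertex, by contrast, the Funk part distinguishes directions of approach. Pushing forward, the images are (by the weak form of Lemma~\ref{lem:5.2}) straight chords converging to one common Busemann point in the vertex part of some $w\in\overline{Y}$, while their other ends lie in two distinct facet parts; two chords from $w$ to distinct facets give distinct Busemann points in the part of $w$ by Lemma~\ref{lem:4.1} --- contradiction. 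This yields directly that every facet of $X$ contains all vertices except one, whence $V_X$ is affinely independent and $X$ is a simplex, with no need for your combinatorial lemma at all. To rescue your approach you would have to replace the false extremality principle by this half-space versus simplicial-cone asymmetry of the tangent cones.
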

\begin{proof}
By Theorem \ref{thm:5.6}, we know that both $g$ and $g^{-1}$ map vertex parts
to facet parts and \textit{vice versa}.
Thus, it suffices to show that $X$ is an $n$-simplex.

To establish this we prove that its vertex set $V_X$ is affinely independent.
If $v\in V_X$, then there exists a relative open face $F_v$ of $\overline{X}$
such that $v$ is not in $\overline F_v$. Suppose that there exists another
vertex $u$ of $X$, different from $v$, that is not in $\overline F_v$.
Choose $p\in F_v$.
Let $\gamma\colon \mathbb{R}\to X$ and $\mu\colon\mathbb{R}\to X$ be
parametrisations of the unique-geodesic lines $(p,v)$ and $(p,u)$, respectively,
such that both $\gamma(t)$ and $\mu(t)$ converge to $p$ as $t$ tends
to $\infty$.

Note that, by Lemma \ref{lem:4.1}, both $\gamma$ and $\mu$ converge,
as $t$ tends to $\infty$, to the same Busemann point
$r_{C_X,p} +f_{\tau(C_X,p),q}$, where $q$ is any point in $X$.
Thus, $g\circ\gamma$ and $g\circ\mu$ converge, as $t$ tends to $\infty$, to the same Busemann point in $Y_B(\infty)$. By assumption this Busemann point is in a vertex part, and so is of the form $r_{C_Y,w}+f_{\tau(C_Y,w),s}$,
where $w$ is a vertex of $Y$ and $s\in Y$.

By Lemma \ref{lem:3.4}, the Busemann points in $X_B(\infty)$ corresponding
to $\gamma(t)$ and $\mu(t)$ with $t$ tending to $-\infty$ are mapped to
Busemann points in different facet parts of $(Y,d_Y)$.
Thus, $g((p,v))=(w,r)$ and $g((p,u))=(w,r')$ for some $r$ and $r'$ lying in
distinct facets of $\overline{Y}$.
However, by Lemma \ref{lem:4.1}, this implies that $g\circ\gamma$ and
$g\circ\mu$ converge, as $t$ tends to $\infty$, to different Busemann points
in the part of $w$, which is a contradiction.
\end{proof} 

\begin{proof}[Proof of Theorem \ref{thm:1.1}]
Suppose that $g$ is in $\mathrm{Isom}(X)$ and is not a collineation.
By Theorem~\ref{thm:6.4}, either $g$ or $g^{-1}$ does not extend continuously
to $\partial X$. From Theorem \ref{thm:5.6} and Lemma \ref{lem:6.1},
it follows that $g$ has to interchange vertex parts and facet parts
and $\dim X\geq 2$. It thus follows from Theorem \ref{thm:7.1} that $X$
is an $n$-simplex with $n\geq 2$ . 

The existence of an isometry that is not a collineation on any $n$-simplex
with $n\ge 2$, follows immediately from Theorem~\ref{thm:1.2},
which will be proved in the next section.
\end{proof}

\section{The isometry group of the simplex} \label{sec:8}
\begin{proof}[Proof of Theorem \ref{thm:1.2}]
It is known \cite{Nu1} that the $n$-simplex endowed with the Hilbert metric
is isometric to the normed vector space $V=\mathbb{R}^{n+1}/\sim$, where
$x\sim y $ if and only if $ x= y+ h(1,1,\ldots,1)$ for some $h\in\mathbb{R}$,
and norm 
\[
\| x\|_{\mathrm{var}}=
   \max_{i} x_i - \min_{j}x_j.
\]
We denote the equivalence class of $x\in\mathbb{R}^{n+1}$ by $[x]$. 
It is obvious that each element of $\mathbb{R}^n\rtimes \Gamma_{n+1}$, 
where $\Gamma_{n+1} =  \sigma_{n+1}\times\langle\rho\rangle$, 
is an isometry of $(V,\|\cdot\|_{\mathrm{var}})$.

By the Mazur-Ulam theorem, every isometry of $V$ is affine.
Let $g\colon V\to V$ be an isometry that fixes the origin.
Clearly the unit ball of $V$ is a polyhedron,
each vertex of which has exactly one representative in the set
\[
V_{\mathrm{var}} = \big\{ (b_0, \dots, b_n) \mid \text{$b_i \in \{0,1\}$ for all $i$} \big\}
\backslash \big\{ (0,\dots,0), (1,\dots,1) \big\}.
\]
This is the set of vertices of a hypercube with two diagonally opposite
corners removed. We see that there are $2^{n+1}-2$ vertices.

Edges of $B_\mathrm{var}$ are segments connecting vertices having
representatives in $V$ that differ on exactly one coordinate.
Thus, there are $n+1$ edges incident to every vertex, except for those whose
representative has exactly one coordinate equal to $0$ or $1$.
Let $V_0$ be the set of vertices whose representative has exactly one
coordinate equal to $0$, and let $V_1$ be the set of vertices whose
representative has one coordinate equal to $1$.
Since $(0,\ldots,0)$ and $(1,\ldots,1)$ are not in $V_\mathrm{var}$,
each vertex in $V_0\cup V_1$ is incident to exactly $n$ edges.
Since $g$ is linear, it preserves the number of edges incident to each vertex,
and so we conclude that $g$ leaves $V_0 \cup V_1$ invariant.

Now consider a subset $U$ of $V_0\cup V_1$ containing $n+1$ elements and having
the following properties: 
no element $U$ is the negative of another element in $U$,
and $\sum_{[u]\in U}[u]=[0]$.
It straightforward to verify that $U$ is equal to either $V_0$ or $V_1$.
Since the properties of $U$ are invariant under linear transformations,
$g$ maps $V_1$ either onto itself, or onto $V_0$.
As $V_1$ spans $V$, any linear map on $V$ is completely determined by its
values on $V_1$. Thus, if $g$ maps $V_1$ onto itself, then $g$ is a
permutation in $\sigma_{n+1}$. On the other hand, if $g$ maps $V_1$ onto $V_0$,
then $g$ is the composition of a permutation in $\sigma_{n+1}$ and $\rho$, as $V_0=-V_1$. We conclude that 
\[
\mathrm{Isom}(X)\cong \mathbb{R}^n\rtimes \Gamma_{n+1}.
\]

To determine the collineation group, let $C_X\subseteq \mathbb{R}^{n+1}$ be the
open cone generated by an $n$-simplex $X$ inside a hyperplane not containing
the origin. Any element $A$ of $\mathrm{GL}(n+1,\mathbb{R})$ that maps $C_X$
onto itself, maps the extreme rays of $C_X$ to extreme rays.
As the $n+1$ vertices of $X$ span $\mathbb{R}^{n+1}$, the map $A$ is completely
determined by its values on the vertices of $X$.
Thus, $A$ can be uniquely represented by a product of an $(n+1)\times (n+1)$
permutation matrix and an $(n+1)\times (n+1)$ positive diagonal matrix.
From this we conclude that
$\mathrm{Coll}(X)\cong \mathbb{R}^n\rtimes \sigma_{n+1}$. 
\end{proof}

We can go from the normed space representation of simplical Hilbert geometries
given above to the cone setting of Section~\ref{sec:2} by exponentiating
coordinate-wise. Indeed, let $\Phi$ be given by
$\Phi(x_1,\dots,x_{n+1})=(e^{x_1},\dots, e^{x_{n+1}})$.
Then, $\Phi$ is an isometry between $(V,||\cdot||_{\mathrm{var}})$ and
$(P_{n+1},d_{P_{n+1}})$, where $P_{n+1}$ is the interior of the standard positive cone. The map $\rho$ on $V$ corresponds to the map
$\rho' = \Phi\circ\rho\circ\Phi^{-1}$ on $P_{n+1}$, which takes the
coordinate-wise reciprocal. It is clear that $\rho'$ is both
order-reversing and homogeneous of degree $-1$.

Maps with these two properties exist on all
\emph{symmetric cones}, of which the cone $P_{n+1}$ is an example.
Indeed, recall \cite{FaK} that a proper open cone $C$ in a finite dimensional
real vector space $V$ with inner-product $\langle\cdot,\cdot\rangle$ is called
\emph{symmetric} if $\{A\in\mathrm{GL}(V)\colon A(C)=C\}$
acts transitively on $C$ and $C=C^*$, where 
\[
C^*=\{y\in V^*\colon \langle x,y\rangle>0\mbox{ for all }x\in\overline{C}\}
\]
is the (open) dual of $C$. 
The \emph{characteristic function} $\phi$ on $C$ given by, 
\[
\phi(x) =\int_{C^*} e^{-\langle x,y\rangle}dy\mbox{\quad for }x\in C,
\]
is homogeneous of degree $-\dim V$, so that Vinberg's \emph{$*$-map},
$x\in C\mapsto x^*\in C^*$, where $x^*=-\nabla\log\phi(x)$ for $x\in C$,
is homogeneous of degree $-1$.
The $*$-map is order-reversing on symmetric cones;
see~\cite[Proposition 3.2]{Kai}.
As a matter of fact, it was proved in \cite{Kai} that this property of the
$*$-map characterises the symmetric cones among the homogeneous cones.
The reader can verify that the map $\rho'$ above is the
$*$-map for the positive cone.

Since the $*$-map is order-reversing and homogeneous of degree $-1$,
it is non-expansive in Hilbert's projective metric on $C$; see~\cite{Nu1}.
But $(x^*)^* =x$ for all $x\in C$, so the $*$-map is actually an isometry under
this metric. Composing it with the canonical projection yields an isometry
of the Hilbert geometry on a section $X$ of $C$.
This isometry is not a collineation except when the symmetric cone
$C$ is a \emph{Lorentz cone},
\begin{align*}
\Lambda_n=\{(x_1,\ldots,x_n)\in \mathbb{R}^n
   \colon \mbox{$x_1>0$ and $x_1^2-x_2^2-\ldots -x_n^2>0$}\},
\end{align*}
for some $n\geq 2$.
To our knowledge there exist no other cones for which
$\mathrm{Isom}\,(X)$ differs from $\mathrm{Coll}\,(X)$.
In fact, we conjecture that $\mathrm{Isom}\,(X)$ and $\mathrm{Coll}\,(X)$
differ if and only if the cone generated by $X$ is symmetric and not Lorentzian,
in which case we believe the isometry group is generated by the
collineations and the isometry coming from the $*$-map.
This is known to be true for the cone of positive-definite Hermitian matrices;
see \cite{Mol}.  
\small

\end{document}